\newtheorem{theorem}{Theorem}[section]
\newtheorem*{theorem*}{Theorem}
\newtheorem{lemma}[theorem]{Lemma}
\newtheorem{proposition}[theorem]{Proposition}
\newtheorem{corollary}[theorem]{Corollary}
\theoremstyle{definition}
\newtheorem{defn}[theorem]{Definition}
\newtheorem{ese}[theorem]{Example}
\numberwithin{equation}{section}
\numberwithin{theorem}{section}
\newcommand{\R}{\mathbb{R}}
\newcommand{\Z}{\mathbb{Z}}
\newcommand{\C}{\mathbb{C}}
\newcommand{\PP}{\mathbb{P}}
\newcommand{\Stab}{\mathrm{Stab}}
\newcommand{\Gal}{\text{Gal}}
\newcommand{\Mon}{\text{Mon}}
\newcommand{\mc}[1]{\mathcal{#1}}
\newcommand{\id}{\mathrm{id}}
\newcounter{mycomment}
\DeclareMathOperator{\rk}{\mathrm{rk}}
\author[F. Barroero]{Fabrizio Barroero}
\address{Università degli Studi Roma Tre\\
Largo San Murialdo 1, 00146, Roma\\
Italy}
\email{fbarroero@gmail.com}
\author[L. Capuano]{Laura Capuano}
\address{Dipartimento di Matematica ``L. Lagrange''\\
Politecnico di Torino \\
C.so Duca degli Abruzzi 24, 10129 Torino\\
Italy}
\email{laura.capuano1987@gmail.com}
\author[U. Zannier]{Umberto Zannier}
\address{Scuola Normale Superiore\\
Piazza dei Cavalieri 7, 56126 Pisa\\
Italy}
\email{u.zannier@sns.it}
\title{Betti maps, Pell equations in polynomials and almost-Belyi maps}
\subjclass[2010]{Primary:11G10, Secondary:11D09, 14E22, 14D22}   % 11D09 Quadratic Diophantine equations, 11G10 Abelian varieties of dimension >1, 14E22 Ramification problems in algebraic geometry, 14D22 Fine and coarse moduli spaces
\date{\today}
\begin{document}

\begin{abstract}
	We study the Betti map of a particular (but relevant) section of the family of Jacobians of hyperelliptic curves using the polynomial Pell equation $A^2-DB^2=1$, with $A,B,D\in \C[t]$ and certain ramified covers $\PP^1\to \PP^1$ arising from such equation and having heavy constrains on their ramification. In particular, we obtain a special case of a result of André, Corvaja and Zannier on the submersivity of the Betti map by studying the locus of the polynomials $D$ that fit in a Pell equation inside the space of polynomials of fixed even degree. Moreover, Riemann Existence Theorem associates to the above-mentioned covers certain permutation representations: we are able to characterize the representations corresponding to ``primitive'' solutions of the Pell equation or to powers of solutions of lower degree and give a combinatorial description of these representations when $D$ has degree 4. In turn, this characterization gives back some precise information about the rational values of the Betti map.
%We obtain a special case of a result of André, Covaja and Zannier on the submersivity of Betti maps for a particular section of the family of Jacobians of hyperelliptic curves. This section is very relevant and appears in several works 
\end{abstract}

% \commLa{Aggiustare abstract}
%\begin{abstract}
%Given a point $\xi$ on a complex abelian variety $A$, its abelian logarithm can be expressed as a linear combination of the periods of $A$ with real coefficients, called the Betti coordinates of $\xi$. Letting $(A,\xi)$ vary in an algebraic family, these coordinates define a system of multivariate real analytic functions. The rank of this map appears to be crucial for some diophantine problems arising from the study of the torsion values of $\xi$. In the special case of the family of Jacobians of hyperelliptic curves and of a specific section, the rank of the Betti map turns out to be maximal (as proved by André, Covaja and Zannier) and this family is also related to the Pell equations in polynomials. In this paper we give an alternative proof of this fact as a consequence of the study of the Pellian locus in the ``moduli space'' of polynomials of even fixed degree $\ge 4$. This is done computing the branching of certain ramified covers associated to solutions of Pell equations and the monodromy group of these map. Using this, we also prove some results about the minimal degree of a solution of a Pell equation.
%\end{abstract}
\maketitle
\section{Introduction} \label{intro}

%
%We study the Betti map of a particular (but relevant) section of the family of Jacobians of hyperelliptic curves using the polynomial Pell equation $A^2-DB^2=1$, with $A,B,D\in \C[t]$ and certain ramified covers $\PP^1\to \PP^1$ arising from such equation and having heavy constrains on their ramification. In particular, we obtain a special case of a result of André, Covaja and Zannier on the submersivity of Betti maps by studying the locus of the polynomials $D$ that fit in a Pell equation inside the space of polynomials of fixed even degree. Moreover, Riemann Existence Theorem associates to the above-mentioned covers certain permutation representations: we are able to characterize the representations corresponding to ``primitive'' solutions of the Pell equation or to powers of solutions of lower degree. Finally, we give a combinatorial description of the case in which $D$ has degree 4.
%
%\commLa{Aggiustare abstract}

In the last few years Betti maps associated to sections of abelian schemes have been extensively studied and applied to problems of diophantine nature. In this article we present an apparently new approach, already sketched by the third author in \cite{Zannier_indam}, to study Betti maps in a special case, which is strictly related to the polynomial Pell equation. %where the link between the two is given by the polynomial Pell equation. 
In particular, we explain how some properties of certain ramified covers of the projective line with prescribed ramification and arising from such equations can be used to prove a special case of a result by Andr\'e, Corvaja and the third named author \cite{ACZ18} on the distribution of the rational values of the Betti map given by a particular section of the family of Jacobians of hyperelliptic curves. In this special case our approach actually gives something more precise than the result in \cite{ACZ18}, and allows in principle to obtain a description of the preimage of the set of rational points with fixed denominator, see Corollary \ref{Coroll} and the subsequent discussion.

%Moreover, this different approach gives something more, namely it allows us to obtain a precise asymptotic estimate which, in general, is not easy to achieve. \commF{Cancellare quest'ultima frase?}
\medskip

%The family of Jacobians of hyperelliptic curves is of particular interest in many different contexts, one  being its link with the polynomial Pell equation, which will be the other object of study of this paper. 

%In particular, we explain how a recent result of Andr\'e, Corvaja and the third named author \cite{ACZ18} on the distribution of the torsion values of a section in the special case of a particular section of a one-parameter family of Jacobians of hyperelliptic curves can be deduced from the study of certain ramified covers of the projective line with prescribed ramification type. 

Given an abelian scheme $\mc{A}\rightarrow S$ of relative dimension $g\geq 1$ over a smooth complex algebraic variety $S$ and a section $\sigma:S\rightarrow \mc{A}$, a Betti map for $\mc{A}$ and $\sigma$ is a real analytic map $\tilde{\beta}: \tilde{S}\rightarrow \R^{2g}$, where $\tilde{S}$ is the universal covering of $S(\C)$. This map  is of particular relevance in many different contexts; for example, rational points in the image of $\tilde{\beta} $ correspond to torsion values for the section, thus linking the Betti map to other diophantine problems. %For instance, in the last ten years, counting results for the number of rational points or points lying on certain affine subspaces on the image of Betti maps have been used for proving unlikely intersection results, see for instance ... \comment{AGGIUNGERE CITAZIONI}
For an account about the use and the study of Betti maps see \cite[Section 1]{ACZ18}.

The rank $\rk \tilde{\beta}$ of the Betti map, namely the maximal value of the rank of the derivative $d\tilde{\beta}(\tilde{s})$ when $\tilde{s}$ runs through $\tilde{S}$, is of particular interest. Indeed, its maximality is equivalent to the fact that the image of the map contains a dense open subset of $\R^{2g}$, and implies that the preimage of the set of torsion points of $\mc{A}$ via $\sigma$ is dense in $S(\C)$, see \cite[2.1.1 Proposition]{ACZ18}.

In \cite{CMZ} the authors studied the rank of Betti maps associated to abelian surface schemes in the context of a relative Manin-Mumford problem. This work initiated more general investigations that led to \cite{ACZ18}, in which the authors conjectured a sufficient condition for the maximality of $\rk\tilde{\beta}$, proving it under some quite general and natural hypotheses (see also \cite{G182} for further results on this topic). 

In particular, in \cite{ACZ18} the authors handled the case of a specific non-torsion section of the Jacobian of the universal hyperelliptic curve of genus $g>0$. This case is relevant by itself but it is also linked to an issue raised by Serre \cite{Serre_Bourbaki}, who noticed a gap in an article by Robinson \cite{Robinson}, in which the family of hyperelliptic curves over the real numbers appears in connection with the Pell equation in polynomials. In \cite[Section 9]{ACZ18} the authors give an argument fixing this gap using Betti maps, see also \cite{Bog99,Peherstorfer,Totik,Lawrence16} for independent proofs of the same result.

 In the first part of this paper, we give an alternative proof of this result in the complex case for a particular but significant section. We obtain this as a consequence of the characterization of the dimension of particular subvarieties arising from solvable Pell equations in the ``moduli space'' of polynomials of even degree $\geq 4$.

Let $\mc{B}_{2d}$ be the Zariski-open subset of $\mathbb{A}^{2d}_\C$ defined by
\begin{equation}\label{polilocus}
\mc{B}_{2d}:=\{ (s_1,\dots, s_{2d} ) \in \mathbb{A}^{2d}_\C: \text{ the discriminant of } t^{2d}+s_{1}t^{2d-1}+\dots +s_{2d} \text{ is non-zero}  \}.
\end{equation}
Given a point $\overline{s}=(s_1,\dots, s_{2d} ) \in\mc{B}_{2d}$, we consider the affine hyperelliptic curve defined by $y^2=t^{2d}+s_{1}t^{2d-1}+\dots +s_{2d} $. If we homogenize this equation, we obtain a projective curve which is singular at infinity. There exists however a non-singular model $H_{\overline{s}}$ with two points at infinity which we denote by $\infty^+$ and $\infty^-$. We fix them by stipulating that the function $t^d\pm y$ has a zero at $\infty^{\pm}$. The curve $H_{\overline{s}}$ is then a hyperelliptic curve of genus $d-1$.

We denote by $\mc{J}_{\overline{s}}$ the Jacobian variety of $H_{\overline{s}}$; then, letting $\overline{s}$ vary in $\mc{B}_{2d}$, we have an abelian scheme $\mc{J}\rightarrow S:=\mc{B}_{2d}$ of relative dimension $g:=d-1$. We call $\sigma:S\rightarrow \mc{J}$ the section corresponding to the point $[\infty^+-\infty^-]$.

Let us give an informal definition of the Betti map associated to $\sigma$; for a precise definition we refer to \cite[Section 3]{ACZ18}.

In the constant case (i.e. when $\mc{J}$ is a complex abelian variety) an abelian logarithm of a point of $\mc{J}(\C)$ can be expressed as an $\R$-linear combination of the elements of a basis of the period lattice. These real coordinates are called \emph{Betti coordinates}. Note that they depend on the choice of the abelian logarithm and of the basis of the period lattice.

In the relative setting, the Betti map describes the variation of the Betti coordinates; locally (in the complex topology) on $S(\C)$, the Lie algebra $\text{Lie}(\mc{J})$ is the trivial vector bundle of rank $g$. Moreover, the kernel of the exponential map is a locally constant sheaf on $S(\C)$. One can then locally define a real analytic function that associates a point $\overline{s}$ of $S(\C)$ to the Betti coordinates of $\sigma(\overline{s})$.

One of the easiest cases which has been deeply studied in literature is the case of the Legendre scheme $\mathcal{L}$. In this case, the base $S$ is the curve $\PP^1 \setminus\{0,1,\infty\}$ and $\mathcal{L}\rightarrow \PP^1$ is the abelian scheme having fibers in $\PP_2$ defined by 
\[
ZY^2=X(X-Z)(X-\lambda Z),
\] 
for every $\lambda \in \PP_1 \setminus \{0,1,\infty\}$. Consequently, $\mathcal{L}$ is embedded in $\PP^1\setminus \{0,1,\infty\}\times \PP^2$. Locally, one can define a basis of periods using the hypergeometric functions. Indeed, for example for $\lambda$ in the region $D=\{|\lambda|<1 \mbox{ and } |1-\lambda|<1\}\subseteq S$, a suitable baisis of periods is given by $f(\lambda):=\pi F(\lambda)$ and $g(\lambda):=\pi iF(1-\lambda)$, where $F(\lambda)=\sum_{m=0}^{\infty} \frac{(m!)^2}{2^{4m}(m!)^4}\lambda^m$. Given a section $\sigma: S \rightarrow \mathcal L$, locally over $D$ one can take its elliptic logarithm $z$ and write it as $z=u_1f+u_2g$, where the functions $u_i$ are real analytic functions. Hence, the Betti map $\beta: D \rightarrow \R^2$ associated to $\sigma$ is given by $\beta(\lambda)=(u_1(\lambda), u_2(\lambda))$. 

In general, usually this map cannot be extended to the whole $S(\C)$ because of monodromy, but we can pass to the universal covering $\tilde{S}$ of $S(\C)$ and define a \emph{Betti map} $\tilde{\beta}: \tilde{S}\rightarrow \R^{2(d-1)}$, which is not unique and depends of several choices. 
The following is our first result.

\begin{theorem}\label{Thm:Betti}
Let $d$ be an integer $\geq 2$ and let $\mc{J}\rightarrow S$ and $\sigma:S\rightarrow \mc{J}$ be the abelian scheme and the section defined above. Then, the corresponding Betti map $\tilde{\beta}: \tilde{S}\rightarrow \R^{2(d-1)}$ satisfies $\rk \tilde{\beta}\geq 2(d-1)$;  equivalently, $\tilde{\beta}$ is submersive on a dense open subset of $\tilde{S}$. In particular, the set of $\overline{s}\in S(\C)$ such that $\sigma(\overline{s})$ is torsion on $\mc{J}_{\overline{s}}$ is dense in $S(\C)$ in the complex topology.
\end{theorem}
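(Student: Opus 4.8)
The plan is to translate the submersivity of $\tilde\beta$ into a dimension count for the loci where the section $\sigma$ takes torsion values, and then to compute these dimensions through the polynomial Pell equation. Writing $D_{\overline s}(t)=t^{2d}+s_1t^{2d-1}+\dots+s_{2d}$, the classical correspondence between units of the hyperelliptic function field and the Pell equation shows that $n\,\sigma(\overline s)=0$ on $\mc{J}_{\overline s}$ exactly when the equation $A^2-D_{\overline s}B^2=1$ has a solution with $A,B\in\C[t]$ and $\deg A=n$; indeed such an $A+By$ is a unit whose divisor is $n(\ip-\im)$, and the order of $[\ip-\im]$ equals $\deg A$ for a primitive solution. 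Hence the torsion locus $Z_n:=\{\,\overline s\in\mc{B}_{2d}: n\,\sigma(\overline s)=0\,\}$ is precisely the locus of those $D$ that fit into a Pell equation with $\deg A=n$. I would first make this dictionary precise, including that the order of the point is recorded by $\deg A$.

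The decisive point is the following local criterion, which I would establish before the computation. Let $s_0\in Z_n$ and suppose the holomorphic section $n\sigma$ is transverse to the zero section of $\mc{J}\to S$ at $s_0$. Writing the abelian logarithm as $\ell(\sigma)=\sum_i\tilde\beta_i\,\omega_i$ in a local holomorphic frame of periods $\omega_1,\dots,\omega_{2g}$, the function $G:=n\,\ell(\sigma)-\sum_i k_i\omega_i$ (with $k_i=n\tilde\beta_i(s_0)\in\Z$) vanishes at $s_0$, cuts out $Z_n$ locally, and satisfies $dG_{s_0}=n\sum_i d\tilde\beta_{i,s_0}\,\omega_i(s_0)$, the remaining terms vanishing because $n\tilde\beta_i(s_0)-k_i=0$. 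Since the $\omega_i(s_0)$ form an $\R$-basis of the Lie algebra $\C^g$, one has $\rk_\R d\tilde\beta_{s_0}=\rk_\R dG_{s_0}$, and transversality of $n\sigma$ means exactly that $dG_{s_0}$ is $\C$-surjective, hence $\R$-surjective onto $\R^{2g}$. Thus a single transverse torsion point forces the maximal rank $\rk\tilde\beta\geq 2(d-1)$, attained on a dense open subset; the final density statement then follows from \cite[2.1.1 Proposition]{ACZ18}. This reduces the theorem to producing one $n$ and one point at which $Z_n$ has the expected codimension $g=d-1$ and is cut out transversally.

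The heart of the argument is then the dimension of $Z_n$, which I would compute via the ramified covers attached to the Pell equation. A solution gives a degree-$n$ morphism $A:\PP^1\to\PP^1$, totally ramified over $\infty$, and the identity $A^2-1=DB^2$ forces all but $2d$ of the points over $\{1,-1\}$ to be ramification points, the $2d$ unramified ones being exactly the roots of $D$. Riemann--Hurwitz then pins down the remaining ramification: over points other than $1,-1,\infty$ there are exactly $d-1$ further (simple) branch points, which are free to move. By the Riemann Existence Theorem the covers with a fixed admissible behaviour over $\{1,-1,\infty\}$ and these $d-1$ moving branch points form a Hurwitz space of dimension $d-1$; combining with the $2$-dimensional group $t\mapsto at+b$ of reparametrisations of the source (which acts on the resulting monic $D$) and noting that the assignment to $D$ is finite-to-one, one gets $\dim Z_n=(d-1)+2=d+1$, that is codimension $g$ in $\mc{B}_{2d}$. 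Equivalently, I would carry this out by the direct count that the polynomials $A$ of degree $n$ for which $A\mp1$ carry the prescribed number of double roots form a family of dimension $(n+1)-(n-d)=d+1$.

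The main obstacle I anticipate is to show that these loci have \emph{exactly} the expected dimension $d+1$, and not more: the theorem would fail if the ramification conditions on $A$ were dependent, since then $Z_n$ would exceed codimension $g$ and the kernel of $d\tilde\beta$ would be correspondingly too large. This independence, together with the existence of a smooth (transverse) point, is precisely what the Riemann Existence Theorem delivers — it guarantees both that admissible monodromy data exist, so that $Z_n\neq\emptyset$ for suitable $n$, and that the corresponding Hurwitz space has dimension equal to the number of free branch points. Verifying the transversality at a generic point of a top-dimensional component, so that the local criterion of the second step applies, is the last delicate step; once it is in place the rank bound, and hence the density of torsion values, follow.
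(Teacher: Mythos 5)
Your dictionary between torsion values of $\sigma$ and solvability of the Pell--Abel equation, and your dimension count for the locus $Z_n$ via the branch points of the associated almost-Belyi cover (at most $d-1$ moving branch points, plus the two-dimensional group of affine reparametrisations, giving $d+1$), are exactly the content of the paper's Proposition \ref{Prop.dimension} and are sound in outline. The gap is in how you convert this count into the rank statement. Your local criterion (transversality of $n\sigma$ to the zero section at $s_0$ implies $\rk_\R d\tilde{\beta}_{s_0}=2(d-1)$) is correct, but its hypothesis is never established. The assertion that the Riemann Existence Theorem delivers ``the existence of a smooth (transverse) point'' is not justified: RET gives the finiteness of the fibres of the branch-point map, hence the bound $\dim Z_n\le d+1$, but knowing that the zero locus of $n\sigma$ has the expected codimension $g$ does not imply that the section is transverse to the zero section at any point of it --- the zero scheme could be everywhere non-reduced (compare $s=(z_1^2,z_2,\dots,z_g)$, whose zero locus has codimension $g$ while $ds$ is nowhere surjective). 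As written, the ``last delicate step'' you defer is not a routine verification; proving generic transversality of $n\sigma$ directly is essentially as hard as the theorem itself.

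The paper avoids this entirely, and only the \emph{upper} bound on $\dim Z_n$ is needed. By \cite[Propositions 2.1 and 2.2]{CMZ} the nonempty fibres of $\tilde{\beta}$ are complex analytic of dimension at least $2d-r/2$, where $r=\rk\tilde{\beta}$; if $r<2(d-1)$ this exceeds $d+1$. Since some $Z_n$ is nonempty (Example \ref{Ex.Zannier} plus Theorem \ref{thm_miranda}), the image in $S(\C)$ of a fibre over the corresponding rational point would be a component of $\mc{P}_{2d}$ of dimension $>d+1$, contradicting Proposition \ref{Prop.dimension}. If you replace your transversality step by this fibre-dimension inequality (whose key input is the complex-analyticity of the Betti fibres, which you do not invoke), your argument closes and coincides with the paper's; without it, or without an actual proof of generic transversality, the proof is incomplete.
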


The above theorem is a special case of \cite[2.3.3 Theorem]{ACZ18}; however, we are going to give a different proof of it in Section \ref{Proof of 1.1}.
Our proof makes use of the connection between the torsion values of the section $\sigma: S \rightarrow \mc{J}$ corresponding to the point $[\infty^+ - \infty^-]$ and the polynomial Pell equation.
\medskip

We recall that the classical Pell equation is an equation of the form $A^2-DB^2=1$ where $D$ is a positive integer, to be solved in integers $A$ and $B$ with $B\neq 0$. A theorem of Lagrange says that such an equation is non-trivially solvable if and only if $D$ is not a perfect square.

We consider the polynomial analogue of this problem, replacing $\Z$ by a polynomial ring over a field. This variant is old as well, and can be dated back to studies by Abel \cite{Abel} especially in the context of integration in finite terms of certain algebraic differentials. Lately, these studies have been carried out by several authors (see \cite{Bogatirev, Serre_Bourbaki, MZpreprint} for more details). Apart from their link with points of finite order in Jacobians of hyperelliptic curves, the polynomial Pell equation, which from now on we will call Pell-Abel equation as suggested by Serre in \cite{Serre_Bourbaki}, has several connections with other mathematical problems, like polynomial continued fractions \cite{Zan19}, elliptical billiards \cite{DR11, CZbilliards}, but also problems in mathematical physics \cite{BZ13} and dynamical systems \cite{McM06}.

Let $K$ be a field of characteristic 0. For a non-constant polynomial $D(t)\in K[t]$, we look for solutions of
\begin{equation} \label{Pell}
	A(t)^2-D(t)B(t)^2=1, 
\end{equation}
where $A(t),B(t)\in K[t]$ and $B\neq 0$; if such a solution exists, we call the polynomial $D$ \emph{Pellian}.
Clear necessary conditions for a polynomial $D$ to be Pellian
are that $D$ has even degree, it is not a square in $K[t]$ but the leading coefficient is a square in $K$. Unlike the integer case, these conditions are not sufficient to guarantee the existence of a non-trivial solution, and there are examples of non-Pellian polynomials satisfying these conditions (see for example \cite{Zannier_indam}).

%
%Let $K$ be a field. For a non-constant polynomial $D(t)\in K[t]$, one looks for solutions of
%\begin{equation} \label{Pell}
%A(t)^2-D(t)B(t)^2=1, 
%\end{equation}
%where $A(t),B(t)\in K[t]$ and $B\neq 0$; if such a solution exists, we call the polynomial $D$ \emph{Pellian}.
%Clear necessary conditions for a polynomial $D$ to be Pellian
%are that $D$ has even degree, it is not a square in $K[t]$ but the leading coefficient is a square in $K$. Unlike the integer case, these conditions are not sufficient to guarantee the existence of a non-trivial solution, and in general it is not easy to establish whether a polynomial is Pellian. Moreover, this issue heavily depends on the choice of the field $K$; indeed, if $K$ is a finite field of characteristic $\neq$ 2, the situation is completely analogous to the classical case, and the conditions stated above are also sufficient for solvability; on the other hand, if $K$ is a field of characteristic zero, this is not true anymore and there are examples of non-Pellian polynomials satisfying these conditions (see for example \cite{Zannier_indam}).

If $K$ is algebraically closed (in particular $K=\C$) there is a criterion, attributed to Chebyshev in \cite{Berry05}, that links the solvability of a Pell-Abel equation to the order of the point $[\infty^+-\infty^-]$ in the Jacobian of the hyperelliptic curve defined by the equation $y^2=D(t)$. 

\begin{proposition}[{see \cite[Proposition 12.1]{Zannier_indam}}] \label{Prop.Abel}
	Let $D\in K[t]$ be a squarefree polynomial of degree $2d\geq 4$. Then, the Pell-Abel equation $A^2-DB^2=1$ has a non-trivial solution $A,B \in \overline{K}[t]$ with $B \neq 0$ if and only if the point $[\infty^+-\infty^-]$ has finite order in the Jacobian of the smooth projective model of the affine hyperelliptic curve of equation $y^2=D(t)$. Moreover, the order of $[\infty^+-\infty^-]$ is the minimal degree of the polynomial $A$ of a non-trivial solution.
\end{proposition}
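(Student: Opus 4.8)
The plan is to translate the Pell--Abel equation into a statement about principal divisors supported at the two points at infinity of the curve $H\colon y^2=D(t)$, exploiting the factorization $(A+yB)(A-yB)=A^2-DB^2$. Throughout I would work on the smooth projective model over $\kbar$ and write $\iota\colon(t,y)\mapsto(t,-y)$ for the hyperelliptic involution; since $y\sim\pm t^d$ near the two points at infinity, $\iota$ interchanges $\infty^+$ and $\infty^-$. The whole argument rests on the observation that a solution of \eqref{Pell} is the same thing as a function on $H$ whose divisor is supported at $\{\infty^+,\infty^-\}$.

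For the first implication, suppose $(A,B)$ is a non-trivial solution and set $u=A+yB$, so that $\iota^*u=A-yB$ and $u\cdot(\iota^*u)=1$. Since $A,B$ are polynomials, $u$ is regular on the affine curve, hence $\mathrm{div}(u)$ is supported on $\{\infty^+,\infty^-\}$; moreover $\mathrm{div}(\iota^*u)=\iota^*\mathrm{div}(u)=-\mathrm{div}(u)$. Writing $n=\deg A$, comparing leading terms in $A^2-DB^2=1$ forces $\deg B=n-d$, shows that the leading coefficient of $D$ is a square, and shows that the top-degree terms of $A$ and $yB$ cancel at exactly one of the two infinities. A short local computation (using $\mathrm{ord}_{\infty^{\pm}}(t)=-1$ and $\mathrm{ord}_{\infty^{\pm}}(y)=-d$) then gives $\mathrm{div}(u)=n(\infty^+-\infty^-)$, so $[\infty^+-\infty^-]$ is torsion of order dividing $\deg A$ in $\mathrm{Jac}(H)$.

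For the converse, assume $[\infty^+-\infty^-]$ has finite order $m$, so that $m(\infty^+-\infty^-)=\mathrm{div}(u)$ for some function $u$. Its only poles being at $\infty^-$, $u$ lies in the affine coordinate ring $\kbar[t]\oplus y\,\kbar[t]$, whence $u=A+yB$ with $A,B\in\kbar[t]$. Applying $\iota$ gives $\mathrm{div}(\iota^*u)=-\mathrm{div}(u)$, so $u\cdot(\iota^*u)=A^2-DB^2$ has trivial divisor and equals a constant $c\in\kbar^{\times}$; rescaling by $\sqrt{c}$ produces a genuine solution of $A^2-DB^2=1$. Here $B\neq 0$, for otherwise $u=A\in\kbar[t]$ would be $\iota$-invariant, contradicting the antisymmetry of $\mathrm{div}(u)$ (note $m\geq 2$, since the genus $d-1$ is positive).

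Finally, reading off degrees yields the last assertion. In the converse construction $A=\tfrac12(u+\iota^*u)$ has a pole of order exactly $m$ at $\infty^-$ and no other pole, so $\deg A=m$; thus the order is realized as a degree. Conversely the first implication shows every non-trivial solution has $\deg A$ divisible by the order, so the order is precisely the minimal degree of $A$. The only genuinely delicate step — and the main obstacle — is the local analysis at $\infty^+$ and $\infty^-$: one must pin down the sign conventions fixing $\infty^{\pm}$, verify that $\iota$ swaps them, and check that the leading-term cancellation occurs at \emph{exactly one} of the two points, so that $\mathrm{div}(u)$ is the antisymmetric divisor $n(\infty^+-\infty^-)$ rather than something with a pole at both infinities.
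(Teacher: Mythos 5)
Your argument is correct and complete: the identification of a solution $(A,B)$ with the unit $u=A+yB$ whose divisor is $n(\infty^+-\infty^-)$, together with the converse construction from a function realizing $m(\infty^+-\infty^-)$, is exactly the standard proof. The paper itself gives no proof of this proposition but cites \cite[Proposition 12.1]{Zannier_indam}, and your reasoning (including the degree count showing the order equals the minimal degree of $A$, and the use of squarefreeness of $D$ to identify the regular functions on the affine curve with $\overline{K}[t]\oplus y\,\overline{K}[t]$) is essentially the argument given there.
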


We point out that a similar criterion holds as well in the case of non-squarefree $D$, by using generalized Jacobians (for more on this, see \cite{Zan19}).

As in the integer case, if $D(t)$ is Pellian, then the associated Pell-Abel equation has infinitely many solutions in $\overline{K}[t]$.
Indeed, a possible non-trivial solution $(A,B)$ generates infinitely many ones by taking powers $A_m+\sqrt{D}B_m:=\pm \left(A\pm \sqrt{D}B\right)^m$. Moreover, solutions to the Pell-Abel equation correspond to the units of the ring $\overline{K}[\sqrt{D}]$ which form a group isomorphic to $\Z \oplus \Z/2\Z$.

We will call a solution \emph{primitive} if it has minimal degree among all the non-trivial ones. On the other hand, we say that a solution $(A_m,B_m)$ is an \emph{$m$-th power} when it can be obtained from another one as explained above. Notice that every solution of the Pell-Abel equation will then be a power of a primitive one.

We call \emph{degree of a solution} $(A,B)$ the degree of $A$. Thus, the order of $[\infty^+-\infty^-]$, if finite, is the degree of a primitive solution of the corresponding Pell-Abel equation.

If we consider the abelian scheme $\mc{J} \rightarrow S=\mc{B}_{2d}$ as above and the section $\sigma: S\rightarrow \mc{J}$ corresponding to the point $[\infty^+-\infty^-]$, we have that the polynomial $D_{\overline{s}}=t^{2d}+s_{1}t^{2d-1}+\dots +s_{2d}$ is Pellian with primitive solution of degree $n$ if and only if $\sigma(\overline{s})$ is a torsion point of order $n$.

%If we consider now our ``moduli space'' $S:=\mc{B}_{2d}$, the abelian scheme $J \rightarrow S$ and the section $\sigma: S\rightarrow J$ corresponding to the point $[\infty^+-\infty^-]$, we have that the polynomial $D_{\overline{s}}=t^{2d}+s_{1}t^{2d-1}+\dots +s_{2d}$ is Pellian with primitive solution of degree $n$ if and only if $\sigma(\overline{s})$ is a torsion point of order $n$.

We define the \textit{Pellian locus}  $\mc{P}_{2d}$ in $\mc{B}_{2d}$ to be
\begin{equation}\label{pellianlocus}
\mc{P}_{2d}:=\{ \overline{s}=(s_1,\dots, s_{2d} ) \in \mc{B}_{2d}: \text{ the polynomial $D_{\overline{s}}=t^{2d}+s_{1}t^{2d-1}+\dots +s_{2d}$ is Pellian}  \}.
\end{equation}

We have the following result.

\begin{theorem}\label{Thm.dens} For $d\geq 2$, the set $\mc{P}_{2d}(\C)$ of Pellian complex polynomials is dense in $\mc{B}_{2d}(\C)$ with respect to the complex topology. 
\end{theorem}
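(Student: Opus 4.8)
The plan is to obtain Theorem~\ref{Thm.dens} as a direct consequence of Theorem~\ref{Thm:Betti}, by rewriting the Pellian condition as a torsion condition on the section $\sigma$. First I would observe that every point $\overline{s}\in\mc{B}_{2d}$ has, by the defining non-vanishing of the discriminant in \eqref{polilocus}, an associated polynomial $D_{\overline{s}}$ that is squarefree of degree $2d\geq 4$ (this is where $d\geq 2$ enters). Since $K=\C$ is algebraically closed, Proposition~\ref{Prop.Abel} applies directly and tells us that $D_{\overline{s}}$ is Pellian if and only if the class $[\infty^+-\infty^-]$ has finite order in $\mc{J}_{\overline{s}}$, i.e. if and only if $\sigma(\overline{s})$ is a torsion point. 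Hence, as subsets of $\mc{B}_{2d}(\C)$,
\[
\mc{P}_{2d}(\C)=\{\,\overline{s}\in\mc{B}_{2d}(\C):\ \sigma(\overline{s})\ \text{is torsion on}\ \mc{J}_{\overline{s}}\,\}.
\]

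With this identification in hand, the second step is simply to invoke the final assertion of Theorem~\ref{Thm:Betti}. The maximality $\rk\tilde{\beta}\geq 2(d-1)$ of the rank of the Betti map, via the general principle \cite[2.1.1 Proposition]{ACZ18} that maximal rank forces the preimage of the torsion points to be dense, guarantees that the torsion set on the right-hand side is dense in $S(\C)=\mc{B}_{2d}(\C)$ for the complex topology. Combining the two statements yields at once the density of $\mc{P}_{2d}(\C)$, which is the claim.

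The one point requiring care is that the translation \emph{Pellian $\Leftrightarrow$ torsion} be valid at every $\overline{s}\in\mc{B}_{2d}$ rather than merely generically; this is exactly why one restricts to the locus \eqref{polilocus} on which $D_{\overline{s}}$ is squarefree, so that Proposition~\ref{Prop.Abel} is applicable throughout. Consequently the substantive work is not in this corollary but is absorbed entirely into Theorem~\ref{Thm:Betti}, and I expect the main obstacle to lie there: establishing the submersivity of $\tilde{\beta}$, equivalently that the order-$n$ torsion loci $\{\sigma\ \text{has exact order}\ n\}$ form a family of subvarieties of the expected codimension $d-1$ whose union accumulates everywhere. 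Were one instead to seek a self-contained proof of Theorem~\ref{Thm.dens} avoiding submersivity, the difficulty would be to produce Pellian polynomials of arbitrarily large solution degree clustering near a prescribed squarefree $D$, say by a deformation argument inside the incidence variety $\{(A,B,D):\ A^2-DB^2=1\}$; but showing directly that these loci are dense appears to be essentially as hard as submersivity itself.
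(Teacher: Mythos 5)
Your proposal is correct and coincides with the paper's own (implicit) derivation: the authors state explicitly that Theorem~\ref{Thm.dens} follows from Proposition~\ref{Prop.Abel} together with Theorem~\ref{Thm:Betti}, exactly as you argue, with the squarefreeness guaranteed by the discriminant condition in \eqref{polilocus} making Proposition~\ref{Prop.Abel} applicable at every point of $\mc{B}_{2d}$. The substantive content is indeed pushed into Theorem~\ref{Thm:Betti}, which the paper proves in Section~\ref{Proof of 1.1} via the dimension bound on the Pellian locus (Proposition~\ref{Prop.dimension}).
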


We point out here that the distribution of Pellian polynomials in families and its connection to problems of unlikely intersections has been investigated (often using Betti maps) in recent years, mainly in the case of curves in $\mc{B}_{2d}$ where the behaviour is completely opposite to what happens in the setting of the above theorem (for a general account on the problems of unlikely intersections, see \cite{Zannier}). Indeed, for instance, in \cite{MZ14b} and \cite{MZpreprint}, the authors show that a generic curve in $\mc{B}_{2d}$, where $d\geq 3$, contains at most finitely many complex points that correspond to Pellian polynomials. One can see also \cite{BMPZ} and \cite{Harry} for the non-squarefree case and \cite{BC20} for similar results for the generalized Pell-Abel equation.

%\commLa{Modificato questa frase per seguire il commento di Z}

Clearly, Theorem \ref{Thm.dens} is a consequence of Proposition \ref{Prop.Abel} and Theorem \ref{Thm:Betti} (and thus of \cite{ACZ18}); however, in this paper we present a different proof of this two results with the principal aim of showing the link between the Betti maps and some properties of certain ramified covers of the projective line with fixed ramification. Moreover, we are able to study more deeply the Pellian locus $\mc{P}_{2d} \subseteq \mc{B}_{2d}$ and showing that it consists of a denumerable union of algebraic subvarieties of $\mc{B}_{2d}$ of dimension at most $d+1$ (see Proposition \ref{Prop.dimension}), each one coming from the preimage of a rational point of a Betti map $\tilde{\beta} $. If $\tilde{\beta}$ did not have maximal rank, then these preimages would have dimension strictly larger than $d+1$. 

To study the Pellian locus $\mc{P}_{2d}$, we associate to a degree $n$ solution $(A,B)$ of a Pell-Abel equation the ramified cover of degree $2n$ given by $A^2:\mathbb{P}^1(\C)\rightarrow \mathbb{P}^1(\C)$. Because of the Pell-Abel equation, the latter has to be ramified above 0, 1, and $\infty$ and, by the Riemann-Hurwitz formula, at most $d-1$ additional points, independently of $n$. For this reason we call these maps, following the third author in \cite{Zannier_indam}, \emph{almost-Belyi maps}.

Riemann Existence Theorem gives a link between such covers and permutation representations. To be more precise, if our cover of degree $2n$ has $h$ branch points, an $h$-tuple $\Sigma$ of elements of $S_{2n}$ can be associated to it. These permutations satisfy certain properties, e.g., they have very strong constrains on their cycle structure. On the other hand, fixing the branch points and a tuple of permutations satisfying these properties determines the map up to automorphisms of the domain.

It turns out that one can determine whether a solution associated to a certain representation is primitive or a power of another one by looking at the partitions of $\{1, \dots , 2n\}$ preserved by the subgroup of $S_{2n}$ generated by the same representation, i.e., the \textit{monodromy group} of the cover.

\begin{theorem}[Theorem \ref{characterization}]
	Let $(A,B)$ be a solution of degree $n$ of a Pell-Abel equation $A^2-DB^2=1$ with $\deg D=2d$ and let $G_A$ be the monodromy group of $A^2$. Then, for every integer $m\mid n$ with $\frac{n}{m} \ge d$, $(A,B)$ is the $m$-th power of another solution if and only if $G_A$ preserves a partition of  the set $\{1,\dots, 2n\}$ in $2m$ subsets, each of cardinality $\frac{n}{m}$.
\end{theorem}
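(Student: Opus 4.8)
The plan is to reduce the statement to a decomposition statement for the polynomial $A^2$ and then to recognise the Chebyshev structure that the Pell equation forces. Throughout I work with the cover $A^2\colon\PP^1(\C)\to\PP^1(\C)$ of degree $2n$, recalling that it is totally ramified over $\infty$ (one $2n$-cycle, which is $\sigma_\infty$) while over $0$ all ramification indices are even. First I would set up a dictionary between the two sides. On the Pell side, writing $u=A+\sqrt D\,B$ (a unit of $\C[\sqrt D]$ with $u\bar u=1$) and $A=\tfrac12(u+u^{-1})$, the classical Chebyshev identity $T_m(\tfrac12(z+z^{-1}))=\tfrac12(z^m+z^{-m})$ shows that $(A,B)$ is the $m$-th power of a solution $(A_1,B_1)$ of degree $n/m$ if and only if $A=T_m(A_1)$, where $T_m$ is the degree-$m$ Chebyshev polynomial; the hypothesis $n/m\ge d$ guarantees that $A_1$ has admissible degree for a nontrivial solution. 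On the cover side, since $\sigma_\infty$ is a single $2n$-cycle, a $G_A$-invariant partition into $2m$ blocks of size $n/m$ is unique (the congruence-mod-$2m$ partition) and, by the correspondence between blocks of a transitive monodromy group and intermediate covers, it corresponds to a factorisation $A^2=g\circ h$ with $\deg g=2m$ and $\deg h=n/m$; the intermediate curve has genus $0$, and normalising the totally ramified point over $\infty$ makes $g$ and $h$ polynomials. Thus the theorem becomes: such a factorisation exists if and only if $A=T_m(A_1)$.

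The forward implication is then immediate: if $A=T_m(A_1)$ then $A^2=(T_m^2)\circ A_1$ is a factorisation of the required type, since $\deg(T_m^2)=2m$ and $\deg A_1=n/m$, and it induces the block system. For the converse I would compare the given $A^2=g\circ h$ with the tautological factorisation $A^2=\mathrm{sq}\circ A$, where $\mathrm{sq}(x)=x^2$. Working in the lattice of decompositions, equivalently with the intermediate fields $\C(h)$ and $\C(A)$ between $\C(A^2)$ and $\C(t)$, and using that $\mathrm{sq}$ is ramified only over $0,\infty$ together with the evenness of the ramification of $A^2$ over $0$, I would show $\C(A)\subseteq\C(h)$, i.e. $A=\rho\circ h$ for a polynomial $\rho$ of degree $m$ with $g=\rho^2$.

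It then remains to identify $\rho$ with $T_m$, and here the Pell structure is decisive: from $A^2-1=DB^2$ with $D$ squarefree one gets $A-1=D_+E_+^2$ and $A+1=D_-E_-^2$, so the only finite critical values of $A$ are $+1$ and $-1$. Since the branch locus of $\rho$ is contained in that of $A=\rho\circ h$, the map $\rho$ is ramified only over $\{+1,-1,\infty\}$, and a degree-$m$ polynomial with at most two finite critical values and only simple ramification there is affinely conjugate to $\pm T_m$; hence $A=T_m(A_1)$ with $A_1$ an affine rescaling of $h$, which via the dictionary exhibits $(A,B)$ as an $m$-th power. The main obstacle is precisely this converse ramification analysis: one must rule out spurious branch points of $\rho$ over $0$, handle the parity of $n/m$ (the step $\C(A)\subseteq\C(h)$ is cleanest when $n/m$ is odd and needs more care otherwise) and the non-squarefree configurations of $A$, and it is exactly here that the hypothesis $n/m\ge d$ does the essential work, forcing the Riemann--Hurwitz budget of $d-1$ extra branch points of the almost-Belyi map to be absorbed entirely by $h$ rather than by $\rho$.
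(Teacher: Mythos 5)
Your overall strategy is recognizably the same as the paper's: translate the $2m$-block system into a polynomial decomposition of $\phi_A=A^2$ via the Galois/L\"uroth correspondence, identify the outer factor with a Chebyshev-type polynomial, and use $n/m\ge d$ to control the ramification. Two of your ideas are genuinely cleaner than the paper's route: since $\sigma_\infty$ is a $2n$-cycle, the invariant $2m$-partition is the congruence-mod-$2m$ one and the invariant $2$-partition (coming from $\phi_A=\mathrm{sq}\circ A$) is even/odd, so the former refines the latter and $\C(A)\subseteq\C(h)$ follows at once, in all parity cases --- no extra care is needed when $n/m$ is even. This replaces the paper's normal subgroup $K$ with quotient $D_{2m}$ and the Galois-closure argument.

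The genuine gap is in the identification of $\rho$ with $T_m$. The assertion that ``the only finite critical values of $A$ are $+1$ and $-1$'' is false: the factorizations $A\mp1=D_\pm B_\pm^2$ control the ramification of $A$ \emph{over} $\pm1$ but say nothing about other critical values, and the whole almost-Belyi picture rests on $A$ having up to $d-1$ further critical points, whose critical values are exactly what produce the extra branch points $b_1,\dots,b_k$ of $\phi_A$ (Example \ref{Ex.Zannier} realizes all $d-1$ of them). So the containment of the branch locus of $\rho$ in $\{\pm1,\infty\}$ is precisely what must be proved. You flag this in your last sentence, but the argument that closes it --- if $\rho$ were branched over some $v\notin\{\pm1\}$, each ramification point of $\rho$ above $v$ has $\ge n/m$ preimages under $h$, so $A$ acquires branching $\ge n/m\ge d$ over $v$, exceeding the Riemann--Hurwitz budget of $d-1$ outside $\{\pm1,\infty\}$ --- is the technical heart of the theorem and is absent. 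The same count is needed a second time, and you omit it: knowing $\rho$ has only $\pm 1$ as finite critical values does not make it Chebyshev (for $m=5$, $t^3(t-5/3)^2$ has exactly two finite critical values with types $(3,2)$ and $(2,1,1,1)$); your ``only simple ramification there'' hypothesis for $\rho$ must itself be derived, again because an index $\ge 3$ over $\pm1$ would cost at least $n/m\ge d$ extra branching of $A$ there. Finally, your dictionary is only proved in one direction: concluding from $A=T_m(A_1)$ that $(A,B)$ is an $m$-th power requires showing $A_1$ solves the \emph{same} Pell--Abel equation, i.e.\ that $D\mid A_1^2-1$ with square cofactor; the paper does this by tracking the $2d$ simple preimages of $1$, and some such verification is needed here too.
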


In \cite{Zannier_indam}, the third author gives an example of a permutation representation associated to a Pell-Abel equation of arbitrary degree, see Example \ref{Ex.Zannier} below. Using the above criterion it is easy to see that the corresponding solution of the Pell-Abel equation is primitive and this implies that, after fixing their degree, there are Pellian polynomials with primitive solutions of any possible degree.

\begin{corollary}\label{Thm.prim}
Let $d,n$ be positive integers with $n\geq d\geq 2$. Then, there exists a squarefree Pellian $D\in \C[t]$ of degree $2d$ such that a primitive solution of the corresponding Pell-Abel equation \eqref{Pell} has degree $n$.
\end{corollary}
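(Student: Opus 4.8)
The plan is to produce, for each pair $(d,n)$ with $n \ge d \ge 2$, an explicit solution of degree $n$ of a Pell-Abel equation with $\deg D = 2d$ and to certify its primitivity purely combinatorially through Theorem \ref{characterization}. The input is Example \ref{Ex.Zannier}: the construction of the third author in \cite{Zannier_indam} furnishes, for arbitrary $d$ and $n$ with $n \ge d$, a tuple $\Sigma$ of permutations in $S_{2n}$ meeting the cycle-structure constraints that the Riemann Existence Theorem imposes on an almost-Belyi map of degree $2n$ with the branch data attached to $\deg D = 2d$. First I would apply the Riemann Existence Theorem to pass from $\Sigma$ to a genuine cover $A^2 \colon \PP^1(\C) \to \PP^1(\C)$, and hence to a solution $(A,B)$ of degree $n$ of a Pell-Abel equation with $\deg D = 2d$. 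Squarefreeness of $D$ I would read off the fibre above $1$: a point there is unramified exactly when it is a simple root of $D$, so the prescribed cycle type $1^{2d}\,2^{\,n-d}$, having the maximal number $2d$ of fixed points, forces all $2d$ roots of $D$ to be distinct.

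Next I would reduce primitivity to a single combinatorial assertion. Any non-trivial solution satisfies $\deg A = d + \deg B \ge d$, so a decomposition of $(A,B)$ as an $m$-th power of another solution forces that base solution to have degree $n/m \ge d$; thus every relevant $m$ already satisfies the hypothesis $\tfrac{n}{m} \ge d$ of Theorem \ref{characterization}. It is therefore enough to show that the monodromy group $G_A = \langle \Sigma \rangle$ preserves no partition of $\{1,\dots,2n\}$ into $2m$ blocks of equal size $n/m$ for any integer $m$ with $2 \le m \le n/d$ and $m \mid n$: by the theorem this says precisely that $(A,B)$ is not an $m$-th power for any such $m$, i.e. that $n$ is the minimal degree of a non-trivial solution, which is the definition of primitivity.

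The hard part will be ruling out such block systems, and here the rigidity of the ramification does most of the work. Above $\infty$ the cover is totally ramified, so one of the permutations in $\Sigma$ is a single $2n$-cycle, which after relabelling we may take to be $(1\,2\,\cdots\,2n)$. A $2n$-cycle preserves a partition into $2m$ blocks of size $n/m$ if and only if the blocks are the residue classes modulo $2m$; hence any hypothetical $G_A$-invariant partition of the forbidden shape must be exactly this ``mod $2m$'' partition. It then remains only to exhibit, for each admissible $m$, one further generator in $\Sigma$ --- for instance the involution of type $2^n$ lying above $0$, or one of the transpositions coming from the additional branch points --- that fails to respect the residues modulo $2m$. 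The sole genuine obstacle is to perform this last check uniformly in $m$ and $n$ directly from the explicit permutations of Example \ref{Ex.Zannier}; once the $2n$-cycle has pinned the candidate partition down, confronting it with a single well-chosen generator is a short and direct computation.
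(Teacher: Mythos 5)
Your proposal is correct and follows essentially the same route as the paper: invoke Example \ref{Ex.Zannier} and Theorem \ref{thm_miranda} to realize the tuple as a degree-$n$ solution, note that $\sigma_\infty=(2n,\dots,1)$ pins any invariant $2m$-block system down to the residue classes modulo $2m$, and then rule it out with a single further generator via Theorem \ref{characterization}. The only caveat is that your first suggested witness, the involution $\sigma_0$ above $0$, in fact \emph{does} preserve the mod-$2m$ partition (it sends $i\mapsto 2n+1-i$ and hence permutes the residue classes); the paper instead uses the transposition $\tau_{d-1}=(n-1,n+1)$, which must fix every block of size $n/m\ge 2$ setwise and yet moves $n-1$ and $n+1$, which lie in different classes since $0<2<2m$.
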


%This theorem is already known to the specialists, although is not easy to find a proof in the literature (see \cite[Remarque historique]{Serre_Bourbaki}). %As already stressed before, our method allows not only to show the existence of a primitive solution of degree $n$, but also to study the distribution of polynomials having a primitive solution of fixed degree among the space of all Pellian polynomials and to explicitly  ``count'' them. 

% In the case where $D\in \R[t]$ with all real roots, the analogous of this theorem has been recently proved by Gendron \cite{Gendron}, using the theory of abelian differentials on algebraic complex curves.
%\commF{I Franzosi non li vogliamo}

Going back to our Betti map $\tilde{\beta}: \tilde{S}\rightarrow \R^{2(d-1)}$ associated to the abelian scheme $\mc{J} \rightarrow S$, we can deduce some properties of the rational points in the image $\tilde{\beta}(\tilde{S})$. We call denominator of a rational point of $\R^{2(d-1)}$ the positive least common denominator of the coordinates of the point. 

It is easy to see that there are no non-zero points of denominator $< d$, since a non-trivial solution of the Pell-Abel equation has degree at least $d$. From \cite{ACZ18}, one can easily deduce that $\tilde{\beta}(\tilde{S})$ contains rational points of every denominator $\geq n_0$ for some large enough natural number $n_0$. Our Corollary \ref{Thm.prim} implies the following result.

\begin{corollary}\label{Coroll}
The image $\tilde{\beta}(\tilde{S})$ of the Betti map contains a point of denominator $n$ for all $n\geq d$.
\end{corollary}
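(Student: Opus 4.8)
The plan is to deduce the statement directly from Corollary \ref{Thm.prim}, using the dictionary --- already invoked implicitly in the discussion preceding this corollary --- between the exact order of a torsion value of $\sigma$ and the exact denominator of the corresponding rational value of $\tilde{\beta}$.

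Fix an integer $n \geq d$. First I would apply Corollary \ref{Thm.prim} to obtain a squarefree Pellian polynomial $D \in \C[t]$ of degree $2d$ whose Pell-Abel equation has a primitive solution of degree exactly $n$. Regarding the coefficients of $D$ as a point $\overline{s} \in \mc{B}_{2d}(\C) = S(\C)$, the correspondence recalled before Theorem \ref{Thm.dens} (itself a consequence of Proposition \ref{Prop.Abel}) shows that $\sigma(\overline{s})$ is a torsion point of the fiber $\mc{J}_{\overline{s}}$ of exact order $n$. Choosing any lift $\tilde{s} \in \tilde{S}$ of $\overline{s}$, the point $\tilde{\beta}(\tilde{s})$ then lies in $\tilde{\beta}(\tilde{S})$.

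It remains to check that $\tilde{\beta}(\tilde{s})$ has denominator exactly $n$. Writing $\tilde{\beta}(\tilde{s}) = (u_1, \dots, u_{2(d-1)})$, the $u_i$ are by definition the coordinates of an abelian logarithm $z$ of $\sigma(\overline{s})$ with respect to a basis $\omega_1, \dots, \omega_{2(d-1)}$ of the period lattice. From $n\,\sigma(\overline{s}) = 0$ one gets that $nz$ lies in the lattice, so each $u_i \in \Q$ with $n u_i \in \Z$; and if the least common denominator of the $u_i$ were a proper divisor $m \mid n$, then $mz$ would already lie in the lattice and $\sigma(\overline{s})$ would have order dividing $m < n$, a contradiction. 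Hence the denominator of $\tilde{\beta}(\tilde{s})$ is exactly $n$, which proves the claim.

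The argument is essentially formal once Corollary \ref{Thm.prim} is in hand; the only point requiring care --- and the one I would regard as the main obstacle --- is the bookkeeping of the last paragraph, namely that the exact torsion order translates into the exact denominator and not merely into a divisor of it. This uses nothing beyond the description of the Betti coordinates recalled in the introduction, but it is where the definition of $\tilde{\beta}$ genuinely enters.
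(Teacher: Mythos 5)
Your proposal is correct and follows exactly the route the paper intends: the paper derives Corollary \ref{Coroll} directly from Corollary \ref{Thm.prim} via Proposition \ref{Prop.Abel} (primitive solution of degree $n$ $\Leftrightarrow$ torsion of exact order $n$ $\Leftrightarrow$ rational Betti value of exact denominator $n$), and your final paragraph merely makes explicit the order-to-denominator bookkeeping that the paper leaves implicit.
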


In a recent work \cite{Corvaja_Demeio_Masser_Zannier}, the authors consider the distribution of points of the base of an elliptic scheme where a section takes torsion values. Those are exactly the points of the base where the corresponding Betti map takes rational values. More specifically, they prove that the number of points where the Betti map takes a rational value with denominator (dividing) $n$ is, for large $n$, essentially $n^2$ times the area of the base with respect to the measure obtained locally by pulling back the Lebesgue measure on $\R^2$ by the Betti map. Moreover, they show that the constant of the main term is equal to $\hat{h}(\sigma)$, where $\hat h$ is the canonical height associated to twice the divisor at infinity on the elliptic scheme.

The corresponding result for our family of Jacobians $\mc{J} \rightarrow S$ and our section would probably give an asymptotic formula for the number of equivalence classes of polynomials $D$ of fixed degree $2d\geq 4$ satisfying a Pell-Abel equation $A^2-DB^2=1$ with $A$ of degree $n$, for $n$ tending to infinity.

On the other hand, using the correspondence given by Riemann Existence Theorem explained above, one is able to determine the exact number of equivalence classes of Pell-Abel equations of fixed degree $n$. In Section \ref{sec:d=2}, we compute this number in the case $d=2$. We point out that, as $d$ grows, the combinatorics behind the problem becomes more complicated, but this approach would still work in principle.

In turn, this complete classification should allow to compute the number of components of $\mc{P}_{2d}\subseteq \mc{B}_{2d}$ that map via the Betti map to rational points of fixed denominator rather than just an asymptotic formula analogous to the one in \cite{Corvaja_Demeio_Masser_Zannier}.

\section*{Acknowledgements}
The three authors started this work in Pisa, where they were funded by the European Research Council, grant number 267273. The first author was also founded by the Engineering and Physical Sciences Research Council [EP/N007956/1] and by the Swiss National Science Foundation [165525]. The second author was also founded by Istituto Nazionale di Alta Matematica [Borsa Ing. G. Schirillo], and by the Engineering and Physical Sciences Research Council [EP/N008359/1].

\section{Almost-Belyi maps}\label{ABm}

Given a (squarefree) Pellian polynomial $D(t)$ of degree $2d$ and a solution $(A, B)$ of degree $n$ of the corresponding Pell-Abel equation $A^2-D(t)B^2=1$, we consider the map $\phi_A:=A^2 : \PP^1(\C) \rightarrow \PP^1(\C)$ of degree $2n$.

It is easy to see that the Pell-Abel equation forces the branching of the map $\phi_A$ to be ``concentrated'' in $ 0,\ 1$ and $\infty$. First, since $\phi_A$ is a polynomial, there is total ramification above $\infty$. Moreover, as $\phi_{A}$ is a square and $\phi_A-1=DB^2$, we have that the ramification indices above $0$ are all even, while the ones above $1$ are all even with the exception of $2d$ points (the simple roots of $D(t)$). Hence, counting the branching of $\phi_A$ as the sum of $e-1$ over the ramification indices $e$, we have that, above $0$ the branching is at least $n$, above $1$ at least $n-d$ and above $\infty$ it is exactly $2n-1$. On the other hand, by the Riemann-Hurwitz formula (see \cite[Theorem A.4.2.5., p.~72]{Hindry_Silverman}), the total branching is equal to $4n-2$. This means that the branching outside $0,1,\infty$ is at most $d-1$, and thus there cannot be more than $d-1$ further branch points. Following the third author \cite{Zan19}, we call $\phi_A$ an \textit{almost-Belyi map} as its branching is ``concentrated'' above $0,1, \infty$ (in the sense that the number of branch points outside this set does not depend on the degree $n$ of $\phi_A$ but only on the degree of $D$ which is fixed). 

To each such map, we can associate a monodromy permutation representation in the following way (for references see \cite{Fried77}, \cite{Miranda} or \cite{Volklein}).

Let us call $\mathcal{B}=\{b_1, \ldots, b_{h}\}$ the set of the branch points of  $\phi_A$ (where $h\le d+2$) and let us choose a base point $q$ different from the $b_i$. Let us moreover call $V:=\PP^1(\C)\setminus \mathcal{B}$. The fundamental group $\pi_1(V,q)$ of $V$ is a free group on $h$ generators $[\gamma_1],\ldots,[\gamma_{h}]$ modulo the relation
\[ 
[\gamma_1]\cdots[\gamma_{h}]=1, 
\]
where each $\gamma_i:[0,1]\rightarrow V$ is a closed path which winds once around $b_i$.
Now, consider the fiber $\phi_A^{-1}(q)$ above $q$, and denote by $q_1, \ldots, q_{2n}$ the $2n$ distinct points in this fiber. Every loop $\gamma \subseteq V$ based at $q$ and not passing through the $b_i$ can be lifted to $2n$ paths $\tilde{\gamma}_1, \ldots, \tilde{\gamma}_{2n}$, where $\tilde{\gamma}_j$ is the unique lift of $\gamma$ starting at $q_j$. Hence, $\tilde{\gamma}_j(0)=q_j$ for every $j$. Now, consider the endpoints $\tilde{\gamma}_j(1)$; these also lie in the fiber $\phi_A^{-1}(q)$, and indeed form the entire preimage set $\{q_1, \ldots, q_{2n}\}$. We call $\sigma(j)$ the index in $\{1, \ldots, 2n\}$ such that $\tilde{\gamma}_j(1)=q_{\sigma(j)}$. 
The function $\sigma$ is then a permutation of the set $\{1, \ldots, 2n\}$ which depends only on the homotopy class of $\gamma$; therefore, we have a group homomorphism $\rho:\pi_1(V,q) \longrightarrow S_{2n}$ called the monodromy representation of the covering map $\phi_A$. This is clearly determined by the images $\sigma_i=\rho([\gamma_i])$ of the generators of $\pi_1(V,q)$, and the image of $\rho$ must be a transitive subgroup of $S_{2n}$, as $V$ is connected.

Note that these permutations must have a precise cicle structure depending on the branch points of the map. Following the notation of \cite{Bilu99}, we say that a branch point $c$ of a rational map $f$ is of type $(m_1, \ldots, m_k)$ if, for $ f^{-1}(c)=\{c_1, \dots, c_k\}$, the point $c_j$ has ramification index $m_j$ for every $j=1, \ldots, k$. The type of a branch point corresponds to the cycle structure of the corresponding permutation, meaning that, going back to $\phi_A$, if $b_i$ is of type $(m_1, \ldots, m_k)$, then $\sigma_i$ is the product of $k$ disjoint cycles $\tau_1,\dots,  \tau_k$ with $\tau_j$ of length $m_j$.

We just showed how to associate to a covering a certain tuple of permutations satisfying some specific decomposition properties; Riemann Existence Theorem tells us that we can do the opposite, i.e., to any tuple of permutations satisfying certain properties we can associate a covering. 
One can actually say more: there exists a 1-1 correspondence between these two sets, if one mods out by the following equivalence relations on both sides.

\begin{defn}
Two rational maps $F_1,F_2:\PP^1(\C) \longrightarrow \PP^1(\C)$ are called \textit{equivalent} if there is an automorphism $\varphi$ of $\PP^1(\C)$ such that $F_1=F_2\circ \varphi$.
\end{defn}

\begin{defn}
If $\Sigma=(\sigma_1, \sigma_2, \cdots ,\sigma_{h})$ and $\Sigma'=(\sigma'_1, \sigma'_2, \cdots, \sigma'_{h})$ are two $h$-tuples of permutations of $S_{2n}$, we say that they are \textit{conjugated} if there exists a $\tau\in S_{2n}$ such that $\tau^{-1}\sigma_i\tau=\sigma_i'$ for every $i=1, \ldots, h$. If this holds, we use the notation $\tau^{-1}\Sigma \tau=\Sigma'$.
\end{defn}

We can finally state this consequence of Riemann Existence Theorem.

\begin{theorem}[\cite{Miranda}, Corollary 4.10] \label{thm_miranda}
Fix a finite set $\mathcal B=\{b_1, \ldots, b_h\}\subseteq \PP^1(\C)$. Then, there is a 1-1 correspondence between
\[
\begin{Bmatrix}
\text{equivalence classes}\\ \text{ of rational maps} \\
F:\PP^1(\C) \longrightarrow \PP^1(\C) \\
\text{of degree }2n \\
\text{whose branch points lie in }\mathcal{B}
\end{Bmatrix}  \text{ and } 
\begin{Bmatrix}
\text{conjugacy classes } 
(\sigma_1, \ldots, \sigma_{h})\in S_{2n}^h  \\
\text{such that }\sigma_1\cdots \sigma_{h}=\mathrm{id},\\
\text{the subgroup generated by the } \sigma_i \\
\text{is transitive and } \sum_{i,j}(m_{ij}-1)=4n-2, \\
\text{where } (m_{i1},\dots, m_{ik_i}) \text{ is the cycle structure of } \sigma_i
\end{Bmatrix}. \]
Moreover, given the tuple of permutations $(\sigma_1, \ldots, \sigma_{h})$, for each $i=1,\dots, h$ there are $k_i$ preimages $b_{i1}, \ldots, b_{ik_i}$ of $b_i$ for the corresponding cover $F: \PP^1(\C) \longrightarrow \PP^1(\C) $, with $e_{F}(b_{ij})=m_{ij}$.
\end{theorem}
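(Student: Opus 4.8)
The plan is to establish the two directions of the correspondence separately and then check that they are mutually inverse modulo the two equivalence relations. The passage from a cover to a tuple of permutations is essentially the monodromy construction already carried out above: given a degree $2n$ map $F$ branched inside $\mathcal B$, restrict to $V=\PP^1(\C)\setminus\mathcal B$ to obtain an unramified topological covering $F^{-1}(V)\to V$ of degree $2n$, and read off the monodromy action of $\pi_1(V,q)$ on the labelled fibre $F^{-1}(q)=\{q_1,\dots,q_{2n}\}$. Choosing the standard generators $[\gamma_i]$ with $\prod_i[\gamma_i]=1$ forces $\prod_i\sigma_i=\id$; connectedness of $\PP^1(\C)$ forces the $\sigma_i$ to generate a transitive subgroup; and the local normal form $z\mapsto z^{e}$ of $F$ at a point of ramification index $e$ over $b_i$ identifies the cycle structure of $\sigma_i$ with the type of $b_i$. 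Riemann--Hurwitz for a degree $2n$ map $\PP^1(\C)\to\PP^1(\C)$ reads $-2=2n\cdot(-2)+\sum_{i,j}(m_{ij}-1)$, which is exactly the numerical condition $\sum_{i,j}(m_{ij}-1)=4n-2$. I would then verify that changing the base point, the labelling of the fibre, or the generating system $[\gamma_i]$ (within the prescribed presentation) replaces $(\sigma_1,\dots,\sigma_h)$ by a conjugate tuple, and that precomposing $F$ with an automorphism of the domain does the same; this shows the assignment descends to a well-defined map on equivalence classes.

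For surjectivity --- the substantial, Riemann Existence Theorem, direction --- I would start from a tuple $(\sigma_1,\dots,\sigma_h)$ satisfying the three conditions and reconstruct a cover. The transitive permutation representation $\pi_1(V,q)\to S_{2n}$ it determines corresponds, by the classification of covering spaces, to a connected unramified covering $p\colon X^{\circ}\to V$ of degree $2n$ with this monodromy, transitivity guaranteeing connectedness. One then compactifies: over a small punctured disc around each $b_i$ the covering splits as a disjoint union of punctured discs, one for each cycle of $\sigma_i$, the disc coming from a cycle of length $m_{ij}$ mapping by $w\mapsto w^{m_{ij}}$; filling in one point in each yields a compact topological surface $X$ together with a proper branched covering $F\colon X\to\PP^1(\C)$. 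The analytic heart is to upgrade this topological picture to a holomorphic one: away from the added points $F$ is a local homeomorphism, so pulling back the complex structure of $V$ endows $X^{\circ}$ with a complex structure making $F$ holomorphic, and at each added point the local model $w\mapsto w^{m_{ij}}$ provides a holomorphic chart, so $X$ becomes a compact Riemann surface and $F$ a meromorphic map. Computing the genus by Riemann--Hurwitz, the hypothesis $\sum_{i,j}(m_{ij}-1)=4n-2$ gives $2g(X)-2=-4n+(4n-2)=-2$, so $g(X)=0$ and $X\cong\PP^1(\C)$ (for instance via Riemann--Roch, which on a genus zero surface produces a degree one meromorphic function). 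Fixing such an isomorphism turns $F$ into a rational self-map of $\PP^1(\C)$ of degree $2n$ with branch points in $\mathcal B$, whose monodromy tuple is by construction the original $(\sigma_1,\dots,\sigma_h)$.

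For injectivity, suppose two covers $F_1,F_2$ yield conjugate tuples, say $\tau^{-1}\Sigma_1\tau=\Sigma_2$. Conjugacy of the two permutation representations is exactly the condition for the associated covers of $V$ to be isomorphic, so the lifting criterion produces a homeomorphism $F_1^{-1}(V)\to F_2^{-1}(V)$ over $V$; this extends uniquely across the punctures to an isomorphism $X_1\to X_2$ commuting with the maps to $\PP^1(\C)$, and, transported through the identifications $X_i\cong\PP^1(\C)$, it becomes an automorphism $\varphi$ with $F_1=F_2\circ\varphi$, i.e.\ an equivalence. The ``moreover'' is then read off the compactification: the preimages $b_{i1},\dots,b_{ik_i}$ of $b_i$ are precisely the points added over $b_i$, one per cycle of $\sigma_i$, and the local model shows $e_{F}(b_{ij})=m_{ij}$.

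I expect the main obstacle to be the surjectivity step: producing an honest \emph{holomorphic} map rather than a merely topological branched cover, which is the content of the Riemann Existence Theorem, together with the identification of a genus zero compact Riemann surface with $\PP^1(\C)$. Everything else --- well-definedness on the two sides, the numerical Riemann--Hurwitz bookkeeping, and injectivity via the classification of covers --- is routine covering space theory and local ramification analysis, and for a paper of this kind one would in practice simply invoke the cited statement \cite{Miranda} rather than reproduce this analytic argument.
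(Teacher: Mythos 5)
Your proposal is correct, and it is essentially the same argument the paper relies on: the paper does not prove this statement itself but invokes it as Corollary 4.10 of the cited reference, and your sketch — monodromy of the restricted unramified cover in one direction; classification of covering spaces, compactification with local models $w\mapsto w^{m_{ij}}$, transport of complex structure, and the Riemann--Hurwitz computation forcing genus zero in the other — is precisely the standard content of that citation. As you yourself note, in the paper's context one would simply quote the reference rather than reproduce this analysis; your bookkeeping (the identity $\sum_{i,j}(m_{ij}-1)=4n-2$, well-definedness under relabelling and precomposition by automorphisms of the domain, and the extension of a covering isomorphism over $V$ to a biholomorphism of $\PP^1(\C)$) is all accurate.
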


As seen before, given a Pell-Abel equation \eqref{Pell} with $A$ of degree $n$ and $D$ of degree $2d$, we get the map $\phi_A$ that is branched in $\{0,1,\infty\}$ and in further $k $ points, where $0\leq k \le d-1$. Call $\mathcal{B}:=\{0,1,\infty,b_1, \ldots, b_{k}\}$ and $V:=\PP_1(\C)\setminus \mathcal{B}$. We fix a $q\in V$ and we let $\rho:\pi_1(V,q) \longrightarrow S_{2n}$ be the associated monodromy representation (as explained above). We now let $\gamma_0,\gamma_{\infty},\gamma_{1}$ be closed paths winding once around $0,\infty,1$ respectively and $\delta_i$ be a closed path winding once around $b_i$ for all $i=1, \ldots, k$. Then, $[\gamma_0] [\gamma_{\infty}] [\gamma_{1}] [\delta_1] \cdots [\delta_{k}]=1$ and all these classes generate $\pi_1(V,q)$.

We let $\sigma_0=\rho(\gamma_0)$, $\sigma_{\infty}=\rho(\gamma_{\infty})$, $\sigma_1=\rho(\gamma_1)$ and $\tau_i=\rho(\delta_i)$ for $i=1, \ldots, k$.
 Then, we have that  
 \begin{enumerate}[1 -]
 	\item $\sigma_0\sigma_{\infty}\sigma_1\tau_1\cdots\tau_{k}=\mathrm{id}$;
 	\item the subgroup of $S_{2n}$ generated by these permutations is transitive;
 	\item $\sigma_0$ and $\sigma_{\infty}$ do not fix any index;
 	\item $\sigma_1$ fixes exactly $2d$ indexes.
 \end{enumerate} 

Moreover, concerning the decomposition of these permutations in disjoint cycles we have that

 \begin{enumerate}[1 -]
	\item all non-trivial cycles in $\sigma_0$ and $\sigma_1$ have even length;
	\item $\sigma_{\infty}$ must be a $2n$-cycle;
	\item the sum over all cycles of $\sigma_0, \dots , \tau_k$ of their lengths minus 1 must give $4n-2$.
\end{enumerate} 

All of this depends on the chosen labelling of the preimages of $q$, so a Pell-Abel equation gives exactly one conjugacy class of $(k+3)$-tuples and a $\mc{B}$ for some $k\in \{0,\dots , d-1\}$.

On the other hand, fixing $k\in \{0,\dots , d-1\}$ and $\mc{B}$,  a conjugacy class of $(k+3)$-tuples satisfying the above conditions gives a cover $\PP^1(\C)\rightarrow \PP^1(\C)$ that is a polynomial branched exactly in $\mc{B}$. Moreover, the cycle structures of $\sigma_0$ and $\sigma_1$ make this polynomial satisfy a Pell-Abel equation. 

We see an example.

\begin{ese}\label{Ex.Zannier}
	Let us consider the case in which the branching outside $0,1, \infty$ is maximal, i.e. $k=d-1$. In this case, $\sigma_1$ is the product of $n-d$ transpositions, $\sigma_0$ is the product of $2n$ transpositions and each $\tau_i$ consists of a single transposition. 
	
	A possible choice given by the third author in \cite{Zannier_indam} and leading to a Pell-Abel equation (\ref{Pell}), is $\sigma_{\infty}=(2n, \ldots, 1)$, $\sigma_0=(1,2n)(2,2n-1)\cdots (n,n+1)$, $\sigma_1=(1,2n-1)(2,2n-2)\cdots (n-d,n+d)$ and $\tau_i=(n-i,n+i)$, $i=1, \ldots, d-1$. We will prove in the next sections that this example corresponds to a primitive solution of a Pell-Abel equation.
\end{ese}

\section{Proof of Theorem \ref{Thm:Betti}} \label{Proof of 1.1}

This section is devoted to proving Theorem \ref{Thm:Betti}.
 For the reader's convenience, we recall the notations and the statement of the theorem. 
Given a point $\overline{s}=(s_1, \ldots, s_{2d})\in \mc{B}_{2d}$, we denote by $H_{\overline s}$ a non-singular model of the hyperelliptic curve defined by $y^2=t^{2d}+ \cdots +s_{2d}$ with two points at infinity that we call $\infty^+$ and $\infty^-$, and by $\mathcal{J}_{\overline s}$ its Jacobian variety. We denote by $\mathcal{J} \rightarrow S=\mc{B}_{2d}$ the abelian scheme of relative dimension $d-1$ whose fibers are the $\mathcal{J}_{s}$ and by $\sigma:S \rightarrow \mathcal{J}$ the section corresponding to the point $[\infty^+-\infty^-]$.
We recall moreover that $\tilde{S}$ is the universal covering of $S(\C)$. 

\begin{theorem*}
Let $\tilde{\beta}: \tilde{S} \rightarrow \R^{2(d-1)}$ be the Betti map associated to the section $\sigma$; then, $\rk \tilde{\beta}\ge 2(d-1)$ and, equivalently, $\tilde{\beta}$ is submersive on a dense open subset of $\tilde S$. In particular, the set of $\overline{s}\in S(\C)$ such that $\sigma(\overline s)$ is torsion on $\mathcal{J}_{\overline s}$ is dense in $S(\C)$ with respect to the complex topology.
\end{theorem*}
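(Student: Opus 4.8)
The plan is to reduce the lower bound $\rk\tilde\beta \geq 2(d-1)$ to a statement about the \emph{dimension} of the Pellian locus inside $\mc B_{2d}$, exploiting the dictionary between torsion values of $\sigma$ and solvable Pell--Abel equations provided by Proposition \ref{Prop.Abel}. The key general principle (see \cite[2.1.1 Proposition]{ACZ18}) is that the fibers of the Betti map $\tilde\beta$ over rational points correspond, upstairs on $\tilde S$, to the loci where $\sigma$ takes torsion values of a fixed order; more precisely, a rational point of denominator $n$ in $\R^{2(d-1)}$ pulls back under $\tilde\beta$ to (a piece of the universal cover of) the locus where $\sigma(\overline s)$ has order dividing $n$. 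Since $\tilde\beta$ is a real-analytic map between manifolds of the same real dimension $2(d-1)$, its rank is $2(d-1)$ on a dense open set if and only if it is not everywhere of lower rank; and the failure of maximal rank would force the real-analytic fibers (hence the complex-analytic torsion loci) to have everywhere dimension strictly larger than the generic expectation.

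The heart of the argument is therefore a \emph{dimension count} on the algebraic side. First I would fix an integer $n \geq d$ and describe, via the almost-Belyi machinery of Section \ref{ABm}, the locus $\mc P_{2d}^{(n)} \subseteq \mc B_{2d}$ of squarefree $D_{\overline s}$ admitting a Pell--Abel solution $(A,B)$ with $\deg A = n$. By Proposition \ref{Prop.Abel} this is exactly the order-$n$ torsion locus of $\sigma$. Using Theorem \ref{thm_miranda} and the Riemann--Hurwitz bookkeeping recorded after it, such a $D$ corresponds to a cover $\phi_A = A^2$ of degree $2n$ branched over $\{0,1,\infty\}$ together with at most $d-1$ further points $b_1,\dots,b_k$; the combinatorial (conjugacy-class) data is discrete, so the only continuous parameters are the positions of the extra branch points $b_1,\dots,b_k$ together with the choice of Möbius normalization. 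A careful count shows that the number of free parameters, and hence $\dim \mc P_{2d}^{(n)}$, is bounded by $d+1$ independently of $n$ (this is the content of the forthcoming Proposition \ref{Prop.dimension}). Since $\dim \mc B_{2d} = 2d$, the codimension of each torsion stratum is at least $2d - (d+1) = d-1$.

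I would then transfer this to the Betti side. Locally on $\tilde S$, write $\tilde\beta = (u_1,\dots,u_{2(d-1)})$ in Betti coordinates; the fiber of $\tilde\beta$ over a rational point of denominator $n$ maps, under the universal covering $\tilde S \to S(\C)$, onto an open piece of the complex-analytic torsion locus just analyzed. A real-analytic map $\tilde\beta$ whose rank were $< 2(d-1)$ at every point of some open set would have all such fibers of real dimension $> \dim_\R \tilde S - (\text{rank}) = 2\cdot 2d - (\text{rank})$ there, i.e. of complex dimension strictly exceeding $d+1$; this contradicts the dimension bound of the previous paragraph, valid for infinitely many $n$ whose torsion loci accumulate. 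Concluding maximal rank on a dense open subset, the image $\tilde\beta(\tilde S)$ contains an open subset of $\R^{2(d-1)}$, hence a dense set of rational points; their preimages, being the torsion loci of $\sigma$, are therefore dense in $S(\C)$ by the open mapping property of a submersion, giving the final density statement.

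The main obstacle I expect is making the comparison between the \emph{real} dimension of Betti fibers and the \emph{complex} dimension of torsion strata fully rigorous, rather than heuristic. One must be careful that the Betti map is only real-analytic and multivalued (defined on $\tilde S$, depending on choices of period basis and logarithm), so relating $\operatorname{rk} d\tilde\beta$ to the codimension of $\mc P_{2d}^{(n)}$ requires knowing that the torsion condition ``$\sigma$ has order $n$'' is precisely $\tilde\beta \equiv$ (a fixed rational vector) on the covering, and that distinct $n$ give loci dense enough that a rank drop on an open set is genuinely incompatible with the uniform bound $\dim \leq d+1$. Handling the non-squarefree degenerations and the transition across the branch divisor where the combinatorial type of $\phi_A$ changes will also require care; I would isolate these in the codimension bound rather than in the analytic comparison.
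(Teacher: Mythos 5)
Your proposal follows essentially the same route as the paper: assume $\rk\tilde\beta < 2(d-1)$, deduce that a fiber of $\tilde\beta$ over a rational value has complex dimension $>d+1$ (the paper cites \cite[Propositions 2.1 and 2.2]{CMZ} for the real-to-complex comparison you worry about), and contradict this with the bound $\dim \mc{P}_{2d}\le d+1$ obtained exactly as you describe, by counting the at most $d-1$ movable branch points of the almost-Belyi cover $\phi_A=A^2$ via Riemann Existence (Proposition \ref{Prop.dimension}). The one ingredient you should make explicit is that the image of $\tilde\beta$ actually contains a rational point --- i.e.\ that Pellian polynomials of degree $2d$ exist --- which the paper supplies by exhibiting the permutation tuple of Example \ref{Ex.Zannier} and invoking Theorem \ref{thm_miranda}; without that, the contradiction has nothing to bite on.
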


\begin{proof}
%\noindent \textit{Proof.}
	The rank of $\tilde{\beta}$ is trivially bounded from above by $2(d-1)$, so it is enough to show that there exists a point $\tilde{s}\in \tilde{S}$ where the rank is $\geq 2(d-1)$.

Recall that $S$ has dimension $2d$. Suppose by contradiction that the maximal rank is $ r< 2(d-1)$; then, the non-empty fibers of $\tilde{\beta}$ are complex analytic varieties of dimension $\ge 2d-r/2>d+1$, see \cite[Propositions 2.1 and 2.2]{CMZ}.

Now, by applying Example \ref{Ex.Zannier} and Theorem \ref{thm_miranda}, we have that the set of Pellian polynomials of degree $2d$ is not empty, and this gives the existence of a rational point, with denominator say $n$, in the image of $\tilde{\beta}$. We call $\tilde{V}$ an irreducible component of the preimage of that point;  we may assume it has dimension $>d+1$. Moreover, we let $V$ be its image in $S(\C)$; this must again have dimension $>d+1$ and we have that each point of $\sigma(V)$ is a torsion point of order $n$ in the respective fiber. 

Then, we have $V\subseteq \mc{P}_{2d}(\C)$, where $\mc{P}_{2d}$ is the Pellian locus defined in \eqref{pellianlocus} as, by Proposition \ref{Prop.Abel}, points of $V$ correspond to some Pellian polynomials with a solution of degree $n$. 

The following Proposition \ref{Prop.dimension} concludes the proof of Theorem \ref{Thm:Betti}, since it contradicts the lower bound $\dim V >d+1$.
\end{proof}

\begin{proposition}\label{Prop.dimension} 
The pellian locus $\mc{P}_{2d}$ consists of denumerably many algebraic subvarieties of $\mc{B}_{2d}$ of dimension at most $d+1$.
\end{proposition}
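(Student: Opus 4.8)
The plan is to parametrize the Pellian locus by the combinatorial/topological data that the Riemann Existence Theorem attaches to the almost-Belyi maps, and then count dimensions. The key observation is that $\mc{P}_{2d}$ decomposes as a countable union indexed by the degree $n$ of a primitive solution, so it suffices to bound uniformly in $n$ the dimension of the locus $\mc{P}_{2d}^{(n)}$ of polynomials $D_{\overline{s}}$ admitting a solution $(A,B)$ with $\deg A = n$. Fixing $n$, I would show this locus is a constructible (indeed algebraic) subset of $\mc{B}_{2d}$, and that its dimension is at most $d+1$ regardless of $n$.

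First I would set up the correspondence. To a point $\overline{s}\in \mc{P}_{2d}^{(n)}$ one associates the almost-Belyi map $\phi_A = A^2$ of degree $2n$, branched over $\{0,1,\infty\}$ together with at most $k\le d-1$ further points $b_1,\dots,b_k$ (the images under $A^2$ of the finite branch points lying outside $0,1$). By Theorem \ref{thm_miranda}, once the branch set $\mc{B}=\{0,1,\infty,b_1,\dots,b_k\}$ is fixed, the cover is determined up to equivalence by a conjugacy class of tuples $(\sigma_0,\sigma_\infty,\sigma_1,\tau_1,\dots,\tau_k)$ satisfying the stated cycle-structure constraints, and there are only \emph{finitely many} such conjugacy classes in $S_{2n}$ for each configuration of $k$ and each combinatorial type. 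The crucial point is that the only continuous parameters are the positions of the extra branch points $b_1,\dots,b_k$, which gives $k\le d-1$ degrees of freedom, and the post-composition by an affine automorphism normalizing $A$ (the domain $\PP^1$ carries a choice of coordinate), contributing the freedom of replacing $A(t)$ by $A(\alpha t+\beta)$, i.e. $2$ further parameters.

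The dimension count then runs as follows. The assignment $\overline{s}\mapsto$ (branch data of $\phi_A$) realizes $\mc{P}_{2d}^{(n)}$, up to finite-to-one and up to the $2$-dimensional reparametrization ambiguity of the domain, as a family fibered over the $k$-dimensional configuration space of unordered branch points $\{b_1,\dots,b_k\}\subseteq \PP^1\setminus\{0,1,\infty\}$, with finite fibers coming from the finitely many admissible conjugacy classes of permutation tuples. Hence each stratum has dimension at most $k + 2 \le (d-1) + 2 = d+1$, and this bound is independent of $n$. Taking the union over the finitely many values of $k\in\{0,\dots,d-1\}$ and over the finitely many conjugacy classes, and then over all $n\in\N$, exhibits $\mc{P}_{2d}$ as a denumerable union of algebraic subvarieties each of dimension at most $d+1$.

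The main obstacle I anticipate is making the map $\overline{s}\mapsto D_{\overline{s}}$ genuinely finite-to-one after accounting for all the automorphism ambiguity, and conversely checking that the branch data together with the finite combinatorial choice reconstructs $D$ up to finitely many possibilities, so that no hidden continuous parameter is lost. Concretely, one must verify that fixing the conjugacy class of the tuple and the extra branch points determines $A^2$ up to the affine action on the source, hence determines $D = (A^2-1)/B^2$ up to the induced action, and that this action on the target side fixes $0,1,\infty$ (so only finitely many Möbius transformations are allowed), which pins down $D$ up to an affine change of variable in $t$ and a scaling — precisely the ambiguity that the normalization defining $\mc{B}_{2d}$ (monic, the two marked points at infinity) removes. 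Handling this automorphism bookkeeping cleanly, and ensuring algebraicity of the resulting loci via the algebraic dependence of the cover on its branch points, is where the real care is required; the raw dimension inequality $k+2\le d+1$ is then immediate.
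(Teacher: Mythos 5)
Your proposal is correct and follows essentially the same route as the paper: both rest on the Riemann Existence Theorem giving only finitely many covers once the branch locus is fixed, the Riemann--Hurwitz bound of $k\le d-1$ moving branch points, and the $2$-parameter affine reparametrization of the source, yielding the count $k+2\le d+1$. The paper merely packages this as a contradiction argument with explicit auxiliary varieties ($W$, $Z$, $\tilde Z$ and the branch-point morphism $\psi$) to carry out the finite-to-one and algebraicity bookkeeping that you correctly flag as the delicate step, and it obtains countability and algebraicity of the components directly from the torsion subgroup schemes of $\mc{J}$.
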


%\commF{Riscritto buona parte}
\begin{proof}
As the irreducible components of $\mc{P}_{2d}$ correspond to rational values of the Betti map, we clearly have that the number of components of $\mc{P}_{2d}$ is countable. Moreover, they are components of projections on $\mc{B}_{2d}$ of torsion subgroup schemes of $\mathcal{J}$ and therefore they are algebraic subvarieties of $\mc{B}_{2d}$.

Fix now a component $U$ of $\mc{P}_{2d}$ of dimension $>d+1$ mapping to a rational point of denominator $n$. Then, every point of $U$ corresponds to a Pellian polynomial with a solution of degree $n$.
We let $W$ be the closure of
\begin{multline*}
\{(a_0,\dots ,a_{2n})\in \mathbb{A}_\C^{2n+1}: a_{0}t^{2n}+a_{1}t^{2n-1}+\dots+ a_{2n} =A^2 \mbox{ is a square} \\ \text{and }  A^2-D_{\overline{s}}B^2=1 \mbox{ for some $\overline{s}\in U$} \}
\end{multline*}
in
$$
\mc{A}_{2n}:=\{(a_0,\dots ,a_{2n})\in \mathbb{A}_\C^{2n+1}: a_{0}\neq 0 \}.
$$
Note that $W$ is the closure of the projection on $\mc{A}_{2n}$ of
 %the image of the morphism $U\hookrightarrow \tilde{U}\to \mc{A}_{2n}$ where 
\begin{multline*}
	\{	(a_0,\dots ,a_{2n},b_0,\dots, b_{2n-2d},\overline{s})\in \mc{A}_{2n}\times \mathbb{A}_\C^{2n-2d+1}\times U: \\ a_{0}t^{2n}+\dots+ a_{2n} =A^2 \text{ and } b_{0}t^{2n-2d}+\dots+ b_{2n-2d} =B^2 \mbox{ are squares} \\ \text{and }  A^2-D_{\overline{s}}B^2=1 \}.
\end{multline*}
Such projection has finite fibers, therefore $W$ has dimension $>d+1$.

%Each polynomial in $W$ corresponds to a class of covers $\mathbb{P}^1(\C)\rightarrow \mathbb{P}^1(\C)$  modulo automorphism of $\PP^1(\C)$. We choose a privileged representative in the following way. 

Note that it is possible to compose any degree $2n$ complex polynomial $f$ with a linear polynomial and obtain a monic polynomial with no term of degree $2n-1$.  Moreover, there are at most $2n$ possible polynomials of this special form that can be obtained in this way from a given $f$. We let
$$
\mc{C}_{2n}:=\{(c_2,\dots ,c_{2n})\in \mathbb{A}_\C^{2n-1} \} \text{ and } \mc{L}:=\{(a,b)\in \mathbb{A}_\C^2: a\neq 0\},
$$
and define the morphism $\varphi:\mc{C}_{2n}\times \mc{L}\rightarrow\mc{A}_{2n}$,
\begin{multline*}
\varphi((c_2,\dots ,c_{2n}),(a,b)) =(a_0,\dots ,a_{2n}) \iff \\ a_0t^{2n}+\dots +a_{2n}=(at+b)^{2n}+c_{2}(at+b)^{2n-2}+\cdots+ c_{2n-1}(at+b)+ c_{2n} .
\end{multline*}

By the above considerations, $\varphi$ is surjective and the fibers are finite of cardinality at most $2n$.

If we let $\pi$ be the projection $ \mc{C}_{2n}\times \mc{L}\to \mc{C}_{2n}$ and $Z=\pi(\varphi^{-1}(W))$, as the fibers of $\pi$ have dimension 2, $Z$ must have dimension $>d-1$.  

Now, for $\overline{c}=(c_2,\dots ,c_{2n})\in Z$, the corresponding polynomial $f_{\overline{c}}(t)=t^{2n}+c_{2}t^{2n-2}+\cdots+ c_{2n}$ is a square and we have $f_{\overline{c}}(t)-D(t)B(t)^2=1$ for some polynomials $D(t),B(t)$ with $D$ of degree $2d$. We let 
\begin{multline*}
\tilde{Z}=	\{	(\overline{c},c'_{2},\dots, c'_{2n-1},r_1,\dots, r_{2n-1})\in  Z \times  \mathbb{A}_\C^{2n-2}\times \mathbb{A}_\C^{2n-1} : f'_{\overline{c}}= 2n t^{2n-1}+ c'_{2} t^{2n-3}+\dots+ c'_{2n-1} \\ \text{and }  f'_{\overline{c}} = 2n(t^{2n-1}+ \sigma_2(r_1,\dots, r_{2n-1})t^{2n-3}+\dots+ \sigma_{2n-1}(r_1,\dots, r_{2n-1})) \},
\end{multline*}
where $\sigma_i(X_1,\dots, X_{2n-1})$ is the $i$-th elementary symmetric polynomial in $2n-1$ variables and $f'_{\overline{c}}$ is the derivative of $f_{\overline{c}}$. In other words, given $\overline c \in Z$, $c_2',\ldots, c_{2n-1}'$ are the coefficients of the derivative of the polynomial $f_{\overline c}(t)$ and $r_1,\ldots, r_{2n-1}$ are the ramification points of the same polynomial (not necessarily distinct).% \commLa{Aggiunto questa frase per descrivere Z tilde.}
Clearly, $\tilde{Z}$ must have dimension $>d-1$.

Finally, if we consider the morphism $\psi:\tilde{Z} \to \mathbb{A}_\C^{2n-1}  $ defined by $$(\overline{c},c'_{2},\dots, c'_{2n-1},r_1,\dots, r_{2n-1})\mapsto (f_{\overline{c}}(r_1), \dots , f_{\overline{c}}( r_{2n-1})),$$
then $f_{\overline{c}}(r_1), \dots , f_{\overline{c}}( r_{2n-1})$ are the (not necessarily distinct) branch points of $f_{\overline{c}}$.
By Riemann Existence Theorem and the above considerations, the fibers of $\psi$ are finite and have cardinality at most $2n$, and therefore $\psi(\tilde{Z})$ has dimension $>d-1$.

This gives a contradiction because the considerations of the previous section imply that the polynomial $f_{\overline{c}}$, since it fits in a Pell-Abel equation, may have not more than $d+1$ finite branch points, two of which are 0 and 1. Therefore, not more than $d-1$ branch points are allowed to vary and $\psi(\tilde{Z})$ must have dimension $\leq d-1$, as wanted.
\end{proof} 

\section{Chebychev polynomials and powers of solutions of the Pell-Abel equation} \label{Redei}

This section is devoted to describing the group of solutions of a Pell-Abel equation. As seen in the introduction, if a Pell-Abel equation $A^2-DB^2=1$ has a non-trivial solution, then it has infinitely many ones, obtained by taking powers $A_m+\sqrt D B_m=(A+\sqrt D B)^m$. Our goal for this section is to prove that $A_m^2=f_m(A^2)$, for some polynomial $f_m$ related to the $m$-th Chebychev polynomial. \\

We recall that Chebychev polynomials of the first kind are defined recursively as
\begin{equation*}
\begin{cases} T_0(t)=1, \\
T_1(t)=t, \\
T_{n+1}(t)=2tT_n(t)-T_{n-1}(t), \end{cases}
\end{equation*} 
so $T_n$ is a polynomial of degree $n$ and, for every $n>1$, the two terms of highest degree of $T_n$ are $2^{n-1}t^n$ and $-n2^{n-3}t^{n-2}$. Explicitly, we have that 
\begin{equation} \label{Chebychev}
T_{n}(t):=\sum _{h=0}^{[n/2]}\binom{n}{2h}t^{n-2h}(t^{2}-1)^{h},
\end{equation}
where $[\cdot]$ denotes the floor function. For a survey on these topics, see \cite{LMT}.

Suppose now that $(A,B)$ is a solution of the Pell- equation $A^2-DB^2=1$. Then, $(A_m,B_m)$ defined by $A_m+\sqrt D B_m=(A+\sqrt D B)^m$ is another solution of the same Pell-Abel equation, and
$A_m=\sum_{j=0}^{[m/2]} \binom{m}{2j} (DB^2)^j A^{m-2j}$. Using that $DB^2=A^2-1$, we have
\begin{equation}\label{Powerofsol}
A_m=\sum_{j=0}^{[m/2]} \binom{m}{2j} (A^2-1)^j A^{m-2j}=T_m(A).
\end{equation}
For the rest of the paper, we will denote by $\phi$ the square function $\phi(t)=t^2$, and by $\phi_g$ the composition $\phi \circ g=g^2$.
\medskip

For our purposes, it will be useful to express $\phi_{A_m}$ as a function of $\phi_A$. 
By \eqref{Powerofsol}, one can see that $\phi_{A_m}=f_m(\phi_A)$, where
\begin{equation} \label{f_m_even}
   f_{2k}(w):=\left ( \sum_{j=0}^{k} \binom{2k}{2j} (w-1)^j w^{k-j} \right )^2;
\end{equation}
and
\begin{equation} \label{f_m_odd}
   f_{2k+1}(w):=w\left ( \sum_{j=0}^{k} \binom{2k+1}{2j} (w-1)^j w^{k-j} \right )^2.
\end{equation}

For every $m\ge 1$ we call $f_m$ the \textit{$m$-th power polynomial}. Notice that $f_m \circ \phi=\phi \circ T_m$. 

For our purposes we will also need a description of the branch locus of the $f_m$; this can be easily done by looking at the branch locus of the Chebychev polynomials which is well known in the literature. We recall that, given a polynomial $f$ we say that a branch point $b$ of $f$ is of type $(\mu_1, \ldots, \mu_k)$ if the $\mu_i$ are the ramification indexes of the points in the preimage of $b$. This is also equal to the array of the multiplicities of the roots of $f(t)-b$.
%given by the multiplicities of its roots. If $b$ is a branch point for $f$, its type $(\mu_1, \ldots, \mu_k)$ will be the root type of $f(t)-b$, which is also the array of the ramification indexes of preimages of $b$.

\begin{proposition} \label{prop:ram_f_m}
The polynomials $f_m$ are branched only at $0$, $1$ and $\infty$. Moreover,
\begin{itemize}
\item if $m$ is even, then $0$ is of type $(2,2, \ldots, 2)$ and $1$ is of type $(1,1,2, \ldots,2)$; furthermore the two unramified points above $1$ are $0$ and $1$;
\item if $m$ is odd, then $0$ is of type $(1,2, \ldots, 2)$, and $0$ is the only unramified point above $0$; furthermore, $1$ is of type $(1,2, \ldots, 2)$, and $1$ is the only unramified point above $1$.
\end{itemize}
\end{proposition}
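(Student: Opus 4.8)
The plan is to reduce everything to the classical ramification of the Chebyshev polynomials $T_m$ and then transport the information through the relation $f_m\circ\phi=\phi\circ T_m$ established in \eqref{Powerofsol} and the surrounding discussion. The key observation is the commutative square: since $f_m\circ\phi=\phi\circ T_m$ with $\phi(t)=t^2$, the branching of $f_m$ is governed by that of $T_m$ together with the (known) branching of $\phi$, which ramifies only over $0$ (at the single point $t=0$, with index $2$). First I would recall the standard fact that $T_m$ is branched only over $\pm 1$ and $\infty$: explicitly, writing $T_m(\cos\theta)=\cos(m\theta)$, the finite critical points are the $m-1$ interior extrema where $T_m$ takes the values $+1$ or $-1$, each a simple (double-point) ramification, while $\infty$ is totally ramified. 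I would record the exact type of $T_m$ over $+1$ and $-1$, distinguishing the parity of $m$: the two ``endpoint'' values $t=\pm1$ satisfy $T_m(\pm1)=\pm1$ according to parity, and these contribute the unramified preimages.

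Next I would read off the branch locus of $f_m$. Because $\phi$ sends $\{+1,-1\}$ to $1$, $0$ to $0$, and $\infty$ to $\infty$, and because any branch point of $f_m$ must, after composing, come from a branch point of $\phi\circ T_m$, the only candidate branch values of $f_m$ are $0$, $1$, and $\infty$; this gives the first assertion. To get the precise types, I would analyze the preimages of a point $w$ under $f_m$ by factoring through the square: a point in $f_m^{-1}(w)$ pulls back under $\phi$ to points in $T_m^{-1}(\pm\sqrt w)$, and the ramification index of $f_m$ at $w=\phi(u)$ is computed by multiplying the local index of $T_m$ by the local index of $\phi$ along each branch, using the chain rule for ramification (multiplicativity of ramification indices under composition). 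Concretely, over $w=0$ one looks at $T_m^{-1}(0)$ together with how $\phi$ ramifies, and over $w=1$ one looks at $T_m^{-1}(\pm1)$; the unramified points of $f_m$ over $1$ should turn out to be exactly the images $\phi(t)$ of the unramified endpoints $t=\pm1$ of $T_m$, which is how $0$ and $1$ appear as the two simple preimages in the even case.

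The main obstacle I expect is the careful parity bookkeeping: $\phi$ itself ramifies over $0$, so whether a given preimage of $0$ or $1$ for $f_m$ is ramified depends on whether the corresponding $T_m$-preimage is $0$ (forcing an extra factor of $2$ from $\phi$) or nonzero, and this interacts with whether $m$ is even or odd. In the even case the roots of $T_m$ are symmetric and nonzero, so $\phi$ merges them in pairs and every preimage of $0$ has type $2$, while over $1$ the two endpoints $\pm1$ give the unramified preimages $0$ and $1$; in the odd case $T_m$ vanishes at $0$, so $\phi$ contributes an unramified preimage of $0$ for $f_m$, and similarly the single endpoint mapping to $1$ produces the unramified preimage. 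I would verify these claims by a direct local computation of orders of vanishing of $f_m(w)$ and $f_m(w)-1$ at the relevant points, reading the multiplicities off the explicit formulas \eqref{f_m_even} and \eqref{f_m_odd}, and cross-checking the total branching against Riemann--Hurwitz for a degree-$m$ map $f_m\colon\PP^1\to\PP^1$ to make sure the listed types account for all $m-1$ units of finite branching.
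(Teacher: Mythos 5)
Your overall strategy is the same as the paper's: both proofs rest on the identity $f_m\circ\phi=\phi\circ T_m$, the known finite branching of $T_m$ over $\pm1$, and the multiplicativity of ramification indices through this commutative square. The only genuine differences are cosmetic: you derive the ramification of $T_m$ from $T_m(\cos\theta)=\cos(m\theta)$, whereas the paper cites Bilu's results on Dickson polynomials via $T_m(t)=\tfrac12 D_m(2t,1)$; and you propose to compute each type by a direct local index computation, whereas the paper shortens the bookkeeping by invoking the symmetries $f_m\circ(1-t)=f_m$ (for $m$ even) and $(1-t)\circ f_m\circ(1-t)=f_m$ (for $m$ odd) to transfer the type at $0$ to the type at $1$. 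Either route works.

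There is, however, one concrete error in your parity bookkeeping for the even case: you assert that ``the two endpoints $\pm1$ give the unramified preimages $0$ and $1$'' of $f_m$ over $1$. Since $\phi(1)=\phi(-1)=1$, the two endpoints account only for the single preimage $u=1$. The other unramified preimage, $u=0$, comes from the \emph{interior} critical point $t=0$ of $T_m$ (for $m$ even one has $T_m(0)=(-1)^{m/2}=\pm1$, and $e_{T_m}(0)=2$): the relation $e_{f_m}(0)\cdot e_\phi(0)=e_\phi(T_m(0))\cdot e_{T_m}(0)$ reads $2\,e_{f_m}(0)=1\cdot 2$, so the $\phi$-ramification at $t=0$ exactly absorbs the $T_m$-ramification and forces $e_{f_m}(0)=1$. (This cancellation is precisely how the paper argues that $f_m$ is unramified at $0$.) As written, your justification for the type $(1,1,2,\ldots,2)$ over $1$ is therefore wrong, even though the conclusion is right; the cross-checks you propose at the end (orders of vanishing from \eqref{f_m_even} and Riemann--Hurwitz) would expose the discrepancy, so this is a fixable slip rather than a fatal flaw, but it must be corrected before the argument is complete.
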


\begin{proof}
For $m=2$ we have that $f_m(t)=(2t-1)^2$, and the proposition holds trivially. We will then assume that $m\ge 3$.

The Chebychev polynomials are strictly related to Dickson polynomials $D_m(x,a)$ (for a definition of these polynomials and their properties see \cite[Section 3]{Bilu99}). Indeed, we have that $T_m(t)=\frac{1}{2}D_m(2t,1)$. Using this relation and \cite[Proposition 3.3, (b)]{Bilu99} we have that, if $m\ge 3$, the finite branching of $T_m$ happens only in $\pm 1$. If $m$ is odd, both $\pm 1$ are of type $(1,2, \ldots, 2)$, while if $m$ is even we have that $1$ is of type $(1,1,2, \ldots, 2)$ and $-1$ is of type $(2,\ldots,2)$.
Using that $\phi \circ T_m= f_m \circ \phi$, the finite branch points of $f_m$ are exactly the ones of $\phi \circ T_m$, i.e. $0$ and $1$. Moreover, 
 since $0$ is not a branch point for $T_m$, then the ramification points of $f_m\circ \phi$ have ramification index $2$.

%We distinguish the cases $m$ even and $m$ odd.

Assume first that $m$ is even. By \eqref{f_m_even}, since $f_m$ is a square, then $0$ is a branch point for $f_m$. As $0$ is not contained $f_m^{-1}(0)$, then it is of type $(2,2, \ldots, 2)$. Let us now consider the branch point $1$; notice that $\{0,1\}\subseteq f_m^{-1}(1)$. Since $\phi$ is ramified in $0$, we have that $f_m$ is unramified in $0$. Using that $f_m \circ (1-t)=f_m$, this implies that $f_m$ is unramified also in $1$. By comparing the other ramification indexes, it follows that $1$ is of type $(1,1,2, \ldots, 2)$ as wanted.

Assume now $m$ odd. By \eqref{f_m_odd}, we have $f_m$ is unramified in $0$, and all the other points in the preimage of $0$ have ramification index $2$, so $0$ is of type $(1,2, \ldots, 2)$. Using the relation 
\begin{equation}
(1-t) \circ f_m \circ (1-t)= f_m, 
\end{equation}
we have that the type of $0$ and the type of $1$ are the same, and $1$ is the unramified point over $1$, concluding the proof.
\end{proof}
%
%we have that the polynomials $f_m$ are branched only in $0$ and $1$. Moreover, if $m$ is even, then $f_m^{-1}(0)$ contains $m/2$ ramification points with ramification index equal to $2$, while $f_m^{-1}(1)$ contains $0$ and $1$ which are not ramified and $m/2-1$ ramification points with ramification index $2$. On the other hand, if $m$ is odd, than the preimage of $0$ contains $0$ (not ramified) and $(m-1)/2$ points which ramification index $2$, while the preimage of $1$ contains $1$ (not ramified) and $(m-1)/2$ points with ramification index $2$.
%%
%\begin{ese} \label{ese:m-powers}
%Let us make some examples:
%\begin{itemize}
%  \item for $m=2$, $f_2(w)=(2w-1)^2$; in this case, $f_2$ has degree $2$ and it is branched in $0$ with $f_2^{-1}(0)=\frac{1}{2}$, while $f_2^{-1}(1)=\{0,1\}$;
%  \item for $m=3$, $f_3(w)=w(4w-3)^2$; in this case, $f_3$  has degree $3$ and is branched in 0 and 1, with $f_3^{-1}(0)=\{0, \frac{3}{4}\}$ and $e_{f_3}(\frac{3}{4})=2$ while $f_3^{-1}(1)=\{1, \frac{1}{4}\}$ and $e_{f_3}(\frac{1}{4})=2$.
%\end{itemize}
%\end{ese}

\section{Monodromy groups and polynomial decompositions}
\label{polynomial_decomposition_Sec}

The problem of finding (functional) decompositions of polynomials is very classical and has been studied first by Ritt in \cite{Ritt} and then by several authors (see, for instance, \cite{Fried73}). In this section we recall some basic facts that will be used in the paper.

Let $f$ be a non-constant polynomial in $\C[t]$, let $u$ be transcendental over $\C$ and $L$ be the splitting field of $f(t)-u$ over $\C(u)$. The \textit{monodromy group} $\Mon(f)$ of $f$ is the Galois group of $L$ over $\C(u)$, viewed as a group of permutations of the roots of $f(t)-u$. By Gauss’ lemma, it follows that $f(t) - u$ is irreducible over $\C(u)$, so $\Mon(f)$ is a transitive permutation group.
If $x$ is a root of $f(t)-u$ in $L$, then $u=f(x)$ and $\Mon(f)=\Gal(L/\C(f(x)))$.
We denote moreover by $H$ the stabilizer of $x$, i.e., $H=\Gal(L/\C(x))$. 
Note that, by Theorem \ref{thm_miranda}, the polynomial $f$ of degree $n$ seen as a map $\PP^1(\C)\rightarrow \PP^1(\C)$ corresponds to a conjugacy class of $h$-tuples of permutations  in $S_n$, where $h$ is the cardinality of the branch locus of $f$. Then, once we fix a labelling of the roots of $f(t)-u$ (which corresponds to fixing a representative $(\sigma_1, \ldots, \sigma_h)$ in the conjugacy class), the monodromy group of $f$ is isomorphic to the subgroup of $S_n$ generated by $\sigma_1, \ldots, \sigma_h$.

Applying L\"uroth's Theorem \cite[Theorem 2]{Schinzel} and \cite[Theorem 4]{Schinzel}, we have the following proposition.

\begin{proposition} \label{prop:correspondence_extensions_decompositions}
Given a polynomial $f\in \C[x]$, there is a correspondence between polynomial decompositions of $f$ and subfields of $\C(x)$ containing $\C(f(x))$. Moreover, if $\C(f(x)) \subseteq K \subseteq \C(x)$, then $f=g \circ h$ with $g,h$ polynomials and $\deg h=[\C(x): K]$. 
\end{proposition}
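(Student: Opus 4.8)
The plan is to realize the correspondence concretely through the tower of function fields $\C(f(x)) \subseteq \C(x)$, matching intermediate fields with decompositions in both directions. Write $u := f(x)$. By Gauss's lemma $f(t) - u$ is irreducible over $\C(u)$, so $[\C(x) : \C(f(x))] = \deg f =: n$, and the place $x = \infty$ is totally ramified in this extension, since a polynomial of degree $n$ has a single pole, at infinity, of order $n$. Beyond elementary field theory, the only genuine inputs I expect to need are Lüroth's theorem (here \cite[Theorem 2]{Schinzel}) and this total ramification at infinity.

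For the easy direction, given a decomposition $f = g \circ h$ with $g, h \in \C[x]$, I would set $K := \C(h(x))$. Since $f(x) = g(h(x)) \in \C(h(x))$ one has $\C(f(x)) \subseteq K \subseteq \C(x)$, and because $h$ is a polynomial $[\C(x) : \C(h(x))] = \deg h$, which already yields the asserted degree formula. Replacing $(g,h)$ by a linearly equivalent pair $(g \circ \ell^{-1}, \ell \circ h)$ with $\ell$ affine linear leaves $\C(h(x))$ unchanged, so this map is well defined on equivalence classes of decompositions.

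The substantive direction is to produce, from an arbitrary intermediate field $\C(f(x)) \subseteq K \subseteq \C(x)$, a decomposition $f = g \circ h$ with both factors polynomial. By Lüroth I may write $K = \C(\theta)$, but $\theta$ is a priori only a rational function, and this is where the main obstacle lies. I would use that $x = \infty$ is totally ramified over $\C(f(x))$, hence lies over a \emph{unique} place of the intermediate field $K$; choosing the Lüroth generator so that this place is the pole of $\theta$ forces $\theta$ to have its only pole at $x = \infty$, i.e. $\theta = h(x)$ for a polynomial $h$. Then $f(x) \in K = \C(h(x))$ gives $f = g \circ h$ for some rational $g$, and since $f$ has a single pole, lying over $h(x) = \infty$, the function $g$ is itself a polynomial. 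This normalization of the abstract Lüroth generator into a polynomial is exactly the content of \cite[Theorem 4]{Schinzel}, and everything hinges on it.

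Finally I would verify that the two constructions are mutually inverse up to linear equivalence: starting from $K$ and building $h$ recovers $K = \C(h(x))$ by construction, while starting from $f = g \circ h$ the field $\C(h(x))$ has $h(x)$ as a Lüroth generator, unique up to an affine linear change that matches precisely the equivalence on decompositions. The degree statement $\deg h = [\C(x) : K]$ then reads off from $[\C(x) : \C(h(x))] = \deg h$, completing the correspondence.
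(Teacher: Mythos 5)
Your proposal is correct and follows essentially the same route as the paper, which simply invokes L\"uroth's theorem and \cite[Theorem 4]{Schinzel}; you have unpacked the content of those citations (total ramification of $x=\infty$ over $\C(f(x))$ forcing a polynomial L\"uroth generator of any intermediate field, and then $g$ polynomial because $f$ has a single pole). The added detail is sound but does not constitute a different argument.
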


Using the Galois Correspondence, this shows that the study of the polynomial decompositions of $f$ reduces to the study of subgroups of the monodromy group of $f$ containing $H$. 

%\begin{proof}
%Using L\"uroth theorem \cite[Theorem 2]{Schinzel} it is easy to prove that there is a correspondence between decompositions of $f \in \C[x]$ and subextensions of $\C(x)$ containing $\C(f(x))$. 
%If $f(x)=g(h(x))$ for some polynomials $g,h \in \C[x]$, then $\C(f(x)) \subseteq \C(h(x)) \subseteq \C(x)$ and $\deg h=[\C(x): \C(h(x))]$. Conversely, given a field $K$ between $\C(f(x))$ and $\C(x)$, then by L\"uroth theorem $K=\C(h(x))$ for some $h(x)\in \C(x)$. Since $f$ is a polynomial, then $h$ can be chosen to be a polynomial by \cite[Theorem 4]{Schinzel}, and $\deg h=[\C(x): K]$. As $f \in \C(h(x))$, this means that $f=g(h(x))$ for some $g\in \C(x)$. Now, $x=\infty$ is the unique preimage of $\infty$ under $f$, and since $h$ is a polynomial $h(\infty)=\infty$, so $x=\infty$ is the unique preimage of $\infty$ under $g$. Thus $g$ and $h$ are both polynomials as wanted. 
%\end{proof}

\noindent We give the following definition:
\begin{defn} \label{linear_equivalence}
We say that two polynomials $f,g\in \C[t]$ are \emph{linearly equivalent} if there exists two linear polynomials $\ell_1, \ell_2\in \C[t]$ such that $f= \ell_1 \circ g \circ \ell_2$.
\end{defn}

Notice that, if $f=g\circ h$, we can always change the decomposition up to composing with linear polynomials, so it makes sense to study the polynomial decompositions up to linear equivalence. Moreover, notice that two linearly equivalent polynomials have the same monodromy group.

We are now interested in computing the monodromy group of the polyomials $f_m$ defined in the previous section. 

\begin{proposition} \label{dihedral}
For every $m\ge 2$, the monodromy group of $f_m$ is exactly $D_{2m}$, where $D_{2m}$ denotes the dihedral group of order $2m$.
\end{proposition}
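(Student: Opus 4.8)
The plan is to avoid computing the permutation representation of $f_m$ from scratch and instead reduce the statement to the classical determination of the monodromy group of the Chebychev polynomial $T_m$, exploiting the relation $f_m\circ\phi=\phi\circ T_m$ already recorded after \eqref{f_m_odd}. Since the paper observed (right after Definition \ref{linear_equivalence}) that two linearly equivalent polynomials have the same monodromy group, it suffices to exhibit an explicit linear equivalence between $f_m$ and $T_m$, and then to identify $\Mon(T_m)$.

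First I would produce the linear equivalence. Writing $\phi(v)=v^2$, the relation $f_m\circ\phi=\phi\circ T_m$ reads $f_m(v^2)=T_m(v)^2$. Combining this with the duplication identity $T_m(v)^2=\tfrac12\bigl(1+T_{2m}(v)\bigr)$ (which is $\cos^2(m\theta)=\tfrac12(1+\cos 2m\theta)$ with $v=\cos\theta$) and the composition law $T_{2m}=T_m\circ T_2$ with $T_2(v)=2v^2-1$, I get
\[
f_m(v^2)=\frac{1+T_m(2v^2-1)}{2},
\]
and hence, setting $w=v^2$,
\[
f_m(w)=\frac{1+T_m(2w-1)}{2}=\ell_1\circ T_m\circ\ell_2(w),\qquad \ell_1(x)=\tfrac12(1+x),\quad \ell_2(w)=2w-1.
\]
As $\ell_1,\ell_2$ are linear, $f_m$ and $T_m$ are linearly equivalent, so $\Mon(f_m)=\Mon(T_m)$.

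It then remains to compute $\Mon(T_m)$, which is the classical dihedral case. One clean way is the substitution $v=\tfrac12(z+z^{-1})$, under which $T_m(v)=\tfrac12(z^m+z^{-m})$, so that $T_m$ is (rationally) the map $z\mapsto z^m$ followed by the quotient by $z\mapsto z^{-1}$. Concretely, with $u=\tfrac12(z^m+z^{-m})$ the splitting field of $T_m(v)-u$ over $\C(u)$ is $\C(z)$: the conjugates of $z$ are the $2m$ roots $\zeta^j z,\ \zeta^j z^{-1}$ ($\zeta=e^{2\pi i/m}$) of $Z^{2m}-2uZ^m+1$, all lying in $\C(z)$, so $\C(z)/\C(u)$ is Galois of degree $2m$ with group $\langle z\mapsto\zeta z,\ z\mapsto z^{-1}\rangle\cong D_{2m}$. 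The point stabilizer $\Gal(\C(z)/\C(v))=\langle z\mapsto z^{-1}\rangle$ has trivial core in $D_{2m}$ for $m\ge 3$, so $\C(z)$ is the Galois closure of $\C(v)/\C(u)$ and $\Mon(T_m)=D_{2m}$; combined with the previous paragraph this yields $\Mon(f_m)=D_{2m}$. Alternatively, one may quote the analogous statement for Dickson polynomials through $T_m(t)=\tfrac12 D_m(2t,1)$ and \cite[Proposition 3.3]{Bilu99}, already used in the proof of Proposition \ref{prop:ram_f_m}.

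The main technical point is establishing the clean equivalence $f_m(w)=\tfrac12(1+T_m(2w-1))$, which rests on the two Chebychev identities above rather than on the unwieldy explicit formulas \eqref{f_m_even}--\eqref{f_m_odd}; once this is in hand the rest is the standard dihedral computation. The remaining care concerns the boundary value $m=2$, where the cyclic generator degenerates and the core computation above fails (one finds $\Mon(T_2)=\Mon(f_2)=S_2$ from $f_2(w)=(2w-1)^2$); I would verify this small case directly and treat it separately from the generic argument for $m\ge 3$.
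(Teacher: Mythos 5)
Your proof is correct, but it takes a genuinely different route from the paper. The paper argues in the ``inverse'' direction: it feeds the ramification data of Proposition \ref{prop:ram_f_m} (only two finite branch points, of dihedral type) into the classification theorem \cite[Theorem 3.4]{Bilu99}, concluding that $f_m$ is linearly equivalent to a Dickson polynomial $D_m(x,a)$, $a\neq 0$, and then quotes \cite[Theorem 3.6]{Bilu99} for the monodromy of Dickson polynomials. You instead produce the linear equivalence \emph{explicitly}: the identity $f_m(w)=\tfrac12\bigl(1+T_m(2w-1)\bigr)$, obtained from $f_m\circ\phi=\phi\circ T_m$ together with the duplication and composition laws for Chebychev polynomials, and then compute $\Mon(T_m)$ by hand via $v=\tfrac12(z+z^{-1})$, checking that the point stabilizer $\langle z\mapsto z^{-1}\rangle$ has trivial core in $D_{2m}$ for $m\ge 3$ so that $\C(z)$ really is the splitting field. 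Your version is self-contained (no appeal to the classification step, which is the genuinely hard direction of \cite{Bilu99}) and the closed-form equivalence is reusable --- indeed the proof of Proposition \ref{suff_prop} later needs precisely that $f_m$ is linearly equivalent to $T_m$, which the paper only gets indirectly through $D_m(x,a)\sim D_m(x,1)\sim T_m$. The paper's route is shorter on the page and consistent with its other uses of \cite{Bilu99}. One further point in your favour: you correctly flag that for $m=2$ the generic argument degenerates and $\Mon(f_2)=\Mon\bigl((2w-1)^2\bigr)$ is the group of order $2$, not the dihedral group of order $4$; the paper's proof does not separate this case, so the statement for $m=2$ should be read with the degenerate convention (or restricted to $m\ge 3$), and your explicit verification is the more careful treatment of that boundary value.
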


\begin{proof}
By Proposition \ref{prop:ram_f_m}, the polynomials $f_m$ are branched only in $0$ and $1$, both of type $(1,2 \ldots, 2)$ if $m$ is odd or of type $(2,\ldots, 2)$ and $(1,1,2\ldots, 2)$ respectively in $m$ is even. Applying \cite[Theorem 3.4]{Bilu99}, this implies that $f_m$ is linearly equivalent to the Dickson polynomial $D_m(x,a)$ with $a \neq 0$. By \cite[Theorem 3.6]{Bilu99}, the monodromy group of $D_m(x,a)$ with $a \neq 0$ is the dihedral group $D_{2m}$, which gives that $\Mon (f_m)=D_{2m}$ as wanted.
\end{proof}

\section{Characterization of primitivity}

In this section, we want to give a criterion to detect the primitivity of a solution of the Pell-Abel equation in terms of the associated conjugacy class of permutations. 
We start with a definition.

\begin{defn}\label{def:prim} For positive integers $n,\ell$ with $\ell\mid 2 n$, we call a partition $\mathcal{F}=\{F_1,\ldots, F_{\ell}\}$ of $\{1,\dots , 2n \}$ an \textit{$\ell$-partition} if $\mathcal{F}$ consists of $\ell$ subsets each of cardinality $\frac{2n}{\ell}$.
	A subgroup $G < S_{2n}$ is said to be \textit{$\ell$-imprimitive} if there exists an $\ell$-partition $\mathcal{F}=\{F_1,\ldots, F_{\ell}\}$ which is preserved by the action of $G$, i.e. for every $\sigma\in G$ and every $a\in \{1, \ldots, \ell\}$, there exists $b\in \{1, \ldots, \ell\}$ such that $\sigma(F_a)=F_b$. 
\end{defn}

It is well-known (see \cite{Ritt}) that, if $f= g \circ h$ is a polynomial of degree $r$ which is the composition of two polynomials $g$ and $h$, where $\deg h=s$ and $\deg g=r$, then $\Mon(f)$ is $r$-imprimitive. Indeed, if $q$ is not a branch point for $f$, then the set of its preimages via $f$ is partitioned in subsets whose elements map via $h$ to the same preimage of $q$ via $g$. 

We now give a criterion for determining whether a solution $(A,B)$ of a Pell-Abel equation is primitive or an $m$-th power for some $m\geq 2$, and this behavior is completely determined by the monodromy group $G_A$ of $\phi_A$. 
Note that $G_A$ is always $2$-imprimitive, since $\phi_A$ is the square of a polynomial.

%Now we are interested in studying the imprimitivity of the monodromy group of the map $\phi_A$. Given a solution $(A,B)$ of a Pell equation, we will denote by $G_A$ the monodromy group of $\phi_A$. 
%Note that $G_A$ is always $2$-imprimitive, as $\phi_A$ is the square of a polynomial. More precisely, the following holds:

%For the same reason, if $(A_m,B_m)$ is the $m$-th power of a solution $(A,B)$, we have showed in Section \ref{Redei} that $Q_{A_m}=f_m(Q_A)=f_m \circ g \circ A$, where $f_m$ is the $m$-power polynomial and $g$ is again the square function. As $f_m$ is a polynomial of degree $m$, it follows immediately that $G_{A_m}$ is $2m$-imprimitive.

%This considerations give one implication in the following statement.

\begin{theorem} \label{characterization}
	Let $(A,B)$ be a solution of degree $n$ of a Pell-Abel equation $A^2-DB^2=1$ with $\deg D=2d$ and let $G_A$ be the monodromy group of $\phi_A$. Then, for every integer $m\mid n$ with $\frac{n}{m} \ge d$, $(A,B)$ is the $m$-th power of another solution if and only if $G_A$ is $2m$-imprimitive. 
\end{theorem}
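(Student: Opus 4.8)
The plan is to prove each direction by translating between the algebraic language of polynomial decompositions (via the monodromy group $G_A$ of $\phi_A = A^2$) and the combinatorial language of $2m$-imprimitivity, using the Chebychev/Dickson machinery of Sections~\ref{Redei} and~\ref{polynomial_decomposition_Sec}. The central observation is that, by \eqref{Powerofsol} and the discussion around \eqref{f_m_even}--\eqref{f_m_odd}, if $(A,B)$ is the $m$-th power of a solution $(A_0,B_0)$ of degree $n/m$, then $A = T_m(A_0)$ and hence $\phi_A = f_m(\phi_{A_0})$; that is, $\phi_A = f_m \circ \phi_{A_0}$ realizes $\phi_A$ as a genuine polynomial composition where the inner factor $\phi_{A_0}$ has degree $2n/m$ and the outer factor $f_m$ has degree $2m$. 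So the two directions should correspond to producing, respectively detecting, such a decomposition.

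For the ``only if'' direction I would assume $(A,B)$ is an $m$-th power, write $\phi_A = f_m \circ \phi_{A_0}$ as above, and invoke the standard fact recalled before Theorem~\ref{characterization}: a degree-$2n$ polynomial that factors as $g \circ h$ with $\deg g = 2m$ (so the outer map $g=f_m$ has $2m$ sheets) has $2m$-imprimitive monodromy group, the preserved partition being the fibers of the inner map $\phi_{A_0}$. Since $\deg \phi_{A_0} = 2n/m$, each block has cardinality $2n/(2m) = n/m$, which is exactly the cardinality required by Definition~\ref{def:prim}; so $G_A$ is $2m$-imprimitive. This direction is essentially immediate.

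The ``if'' direction is the substantive one. Assuming $G_A$ is $2m$-imprimitive, Proposition~\ref{prop:correspondence_extensions_decompositions} together with the Galois correspondence yields a decomposition $\phi_A = g \circ h$ with $g,h \in \C[t]$, $\deg g = 2m$ and $\deg h = 2n/m$. The work is to show that, after adjusting by linear equivalence, the outer factor $g$ is forced to be the power polynomial $f_m$ and the inner factor $h$ is a square $\phi_{A_0}=A_0^2$ with $(A_0,B_0)$ a genuine degree-$(n/m)$ solution of a Pell-Abel equation. I would argue this by comparing branch data. Since $\phi_A$ is branched only over $0,1,\infty$ plus at most $d-1$ further points, and $g\circ h = \phi_A$, the branch points of $g$ lie among the branch points of $\phi_A$; the hypothesis $n/m \ge d$ guarantees that the extra $\le d-1$ branch points of $\phi_A$ cannot be branch points of $g$ (otherwise the fiber count over them would be too small), so $g$ is branched only over $0,1,\infty$. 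Then the prescribed cycle structure of $\sigma_0,\sigma_\infty,\sigma_1$ (total ramification over $\infty$, even cycles over $0$, exactly $2d$ fixed points over $1$) pushes down through $h$ to pin the ramification type of $g$ over $0$ and $1$ to precisely the types listed in Proposition~\ref{prop:ram_f_m}; by Proposition~\ref{dihedral} (and the Dickson classification of \cite{Bilu99} it rests on) this forces $g$ to be linearly equivalent to $f_m$. A linear change then normalizes $g=f_m$, and since $f_m = \phi \circ T_m \circ \phi^{-1}$ on the relevant level, the factor $h$ must itself be of the form $A_0^2$; reading off $A = T_m(A_0)$ recovers the $m$-th power structure and the Pell-Abel equation for $(A_0,B_0)$.

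The main obstacle I anticipate is the ``if'' direction's branch-locus bookkeeping: one must rule out the possibility that the decomposition $\phi_A = g\circ h$ has an outer factor $g$ whose branching is contaminated by the $\le d-1$ extra branch points, and then match the induced ramification types over $0$ and $1$ precisely against those of $f_m$ so as to apply the Dickson characterization. This is exactly where the numerical hypothesis $\frac{n}{m}\ge d$ is essential—it is what forces the extra branch points to be ``invisible'' to the degree-$2m$ factor—and making that forcing rigorous (rather than merely plausible) is the crux of the argument.
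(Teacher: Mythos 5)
Your overall strategy is the same as the paper's: the easy direction from ``composition implies imprimitivity'', and the hard direction by converting the block system into a polynomial decomposition and identifying the outer factor with the power polynomial via Chebyshev/Dickson theory. You have also correctly located one of the roles of the hypothesis $n/m\ge d$: if the degree-$2m$ outer factor were ramified over one of the $\le d-1$ extra branch points, the induced branching of $\phi_A$ there would be at least $n/m\ge d$, contradicting the Riemann--Hurwitz count. However, before addressing the real issue, note a persistent degree error: $f_m$ has degree $m$, not $2m$ (see \eqref{f_m_even}--\eqref{f_m_odd}), and the decomposition exhibiting $2m$-imprimitivity is $\phi_A=(f_m\circ\phi)\circ A'$ with outer factor of degree $2m$ and inner factor $A'$ of degree $n/m$; your assignment $\deg g=2m$, $\deg h=2n/m$ multiplies to $4n$, and with the correct grouping $f_m\circ\phi_{A'}$ (outer degree $m$, inner degree $2n/m$) you would only get $m$-imprimitivity.

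The genuine gap is in the step where you claim that the cycle structures of $\sigma_0,\sigma_\infty,\sigma_1$ ``push down through $h$ to pin the ramification type of $g$ over $0$ and $1$.'' A composition $g\circ h$ can distribute ramification between the two factors in many ways, and nothing in your sketch determines the type of the outer factor over $0$ and $1$ from that of $\phi_A$; this is precisely what the paper does \emph{not} attempt. Instead it: (i) normalizes the $2m$-partition to the congruence classes $F_h \bmod 2m$, the only $2m$-partition preserved by the $2n$-cycle $\sigma_\infty$; (ii) proves in Lemma \ref{lemma:structure_G} that $G_A$ then also preserves the coarser $m$-partition $\mathcal E_m$ and computes the induced action of every generator on both block systems (here $n/m\ge d$ is used again, to force each $\tau_j$ to stabilize every block); (iii) extracts in Lemma \ref{lemma:subgroups} a chain $\Stab_{G_A}(\{2n\})\subseteq H_1<H_2<G_A$ and, for $m>2$, a normal subgroup $K$ with $G_A/K\cong D_{2m}$, so that in the resulting decomposition $\phi_A=h_2\circ h_1\circ z$ the degree-$m$ factor $h_2$ has monodromy group $D_{2m}$ and is therefore linearly equivalent to $f_m$ by the Dickson classification --- i.e.\ the outer factor is identified through its monodromy quotient, not through its ramification type; and (iv) performs a delicate normalization of the linear factors (separating the cases $m$ even/odd to show $\alpha_1=t$ and to place the branch point of the quadratic factor at $0$), followed by a count of the $2d$ simple preimages of $1$ to verify that the inner factor is $\phi\circ A'$ with $A'$ a solution of the \emph{same} Pell--Abel equation. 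Steps (ii)--(iv) constitute the substance of the proof and are absent from, or only gestured at in, your proposal; without something playing the role of the dihedral quotient $G_A/K\cong D_{2m}$, the identification of the outer factor does not go through.
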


We will prove the above theorem in the next section. Before this, we show how Corollary \ref{Thm.prim} can be deduced from Theorem \ref{characterization}. %\comment{Avevo tolto l'ipotesi $(A,B)$ squarefree; ho evidenziato l'unico punto piu' avanti dove la uso; da capire se serve davvero o no.}

%For this, we consider a $(d+2)$-tuple $\Sigma_A=(\sigma_0,\sigma_{\infty},\sigma_1,\tau_1, \ldots,\tau_{d-1})$ given by a solution and recall that $\sigma_{\infty}$ is a $2n$-cycle, $\sigma_0$ is the product of $n$ disjoint transpositions, $\sigma_1$ is the product of $n-d$ disjoint transpositions and $\tau_i$ is a transposition for every $i=1, \ldots, d-1$.  
%
%After conjugating we can assume that $\sigma_{\infty}=(2n,2n-1,\ldots, 1)$. Moreover, the product $\sigma_1\prod_{i=1}^{d-1} \tau_i$ fixes at least two points; without loss of generality, we can assume that $\sigma_1\prod_{i=1}^{d-1} \tau_i$ fixes $2n$.  
%
%We consider a particular $l$-partition of $\{1, \dots , 2n\}$.
%Let $l \mid 2n$ and $h\in \{1,\dots , 2n \}$; for every $0\le h\le 2n-1$, we define
%$$F_h^l= \{ a \in \{1,\dots , 2n \}: a \equiv h \mod l \}.$$
%Then, $\mathcal{F}_l=:\{F_1^l,\ldots, F_l^l\}$ is a $l$-partition of $\{1, \ldots, 2n\}$. 
%
%It is easy to see that $\sigma_{\infty}$ preserves the $l$-partition $\mc{F}_l$ and the latter is the only $l$-partition preserved by $\sigma_{\infty}$.

\begin{proof}[Proof of Corollary \ref{Thm.prim}]
	We prove that a solution $(A,B)$ associated to the conjugacy class of the tuple $(\sigma_0,\sigma_{\infty},\sigma_1,\tau_1, \ldots,\tau_{d-1})$ given in Example  \ref{Ex.Zannier} is primitive for every $n\ge d \ge 2$.
	
	 We recall that $\sigma_{\infty}=(2n, \ldots, 1)$, $\sigma_0=(1,2n)$ $(2,2n-1)\cdots (n,n+1)$, $\sigma_1=(1,2n-1)(2,2n-2)\cdots (n-d,n+d)$ and $\tau_i=(n-i,n+i)$, for $i=1, \ldots, d-1$.
	 
Suppose by contradiction that $(A,B)$ is the $m$-th power of another solution $(A',B')$ for some $m \mid n$, $1<m<n$. %Then, $\phi_{A}=f_m(\phi_{A'})$.
By Theorem \ref{characterization}, $G_A$ has to preserve a $2m$-partition of $\{1, \ldots, 2n\}$. We see that the only $2m$-partition preserved by $\sigma_{\infty}$ is $\mathcal F_{2m}:=\{F_1,\ldots, F_{2m}\}$, where for every $1\le h\le 2m$, we set
$$F_h= \{ a \in \{1,\dots , 2n \}: a \equiv h \mod 2m \}.$$ 
	On the other hand, $\tau_{d-1}=(n-1, n+1)$ and $n-1,\ n+1$ are not in the same congruence class modulo $2m$ if $m\ge 2$, so $\mathcal F_{2m}$ is not preserved by $G_A$. This implies that $G_A$ does not preserve any $2m$-partition, hence proving the theorem.
	%This shows that the map $A^2$ associated to the permutation $\sigma_{\infty}=(2n, \ldots, 1)$, $\sigma_0=(1,2n)$ $(2,2n-1)\cdots (n,n+1)$, $\sigma_1=(1,2n-1)(2,2n-2)\cdots (n-d,n+d)$ and $\tau_i=(n-i,n+i)$, $i=1, \ldots, d-1$ corresponds to a primitive solution for a Pell's equation $X^2-DY^2=1$ with deg$D=2d$, proving the theorem.
\end{proof}

\section{Proof of Theorem \ref{characterization}}
 
In view of what we explained in the previous section, one direction is quite easy. Indeed, let $(A,B)$ be a solution of a Pell-Abel equation that is the $m$-th power of another solution $(A', B')$. In Section \ref{Redei} we showed that $\phi_{A}=f_m(\phi_{A'})=f_m \circ \phi \circ A'$, where $\phi(t)=t^2$. As $f_m$ is a polynomial of degree $m$, it follows immediately that $G_{A}$ is $2m$-imprimitive. \\

The proof of the converse is much more involved.

Let us consider a solution $(A,B)$ with $A$ of degree $n$ and the associated almost-Belyi map $\phi_A$. By Theorem \ref{thm_miranda}, the map $\phi_A$ corresponds to the conjugacy class of a $(k+3)$-tuple of permutations in $S_{2n}$ given by $\Sigma_A:=(\sigma_0,\sigma_{\infty},\sigma_1,\tau_1, \ldots,\tau_{k})$ with $k \le d-1$, where we recall that $\sigma_{\infty}$ is a $2n$-cycle, $\sigma_0$ is the product of disjoint cycles of even length, $\sigma_1$ fixes exactly $2d$ indexes and is the product of disjoint cycles of even length. We call $b_1, \ldots, b_k$ the further $k$ branch points different from $0,1, \infty$. 
%and $\tau_i$ is a transposition for every $i=1, \ldots, d-1$.  

Conjugating by a suitable permutation, we can and will always assume that $\sigma_{\infty}=(2n,2n-1,\ldots, 1)$. 
We recall that the length of the disjoint cycles appearing in the permutations of $\Sigma_A$ is very restricted. In particular, we know that $\sigma_1$ fixes $2d$ indexes and $\sum_{i,j}(m_{ij}-1)\leq d-1$ where $(m_{i1},\dots, m_{ih_i})$ is the cycle structure of $\tau_i$. Therefore, there are at least two indexes which are fixed by $\sigma_1$ and, at the same time, by every $\tau_i$. Without loss of generality, we  are going to assume that one of them is $2n$. 
Moreover, each $\tau_i$ must fix at least $2n-2(d-1)$ indexes.

%
%The proof of the converse implication requires more work. Let us consider a \comment{squarefree} solution $(A,B)$ with $A$ of degree $n$ and the associated almost-Belyi map $\phi_A$. As before, by Theorem \ref{thm_miranda}, this corresponds to a conjugacy class of $(k+3)$-tuples $\Sigma_A=(\sigma_0,\sigma_{\infty},\sigma_1,\tau_1, \ldots,\tau_{k})$. As mentioned before, such class contains a $\Sigma_A$ with $\sigma_{\infty}=(2n, 2n-1, \ldots, 1)$ and such that $\sigma_1$ and every $\tau_i$ fix the index $2n$.

The monodromy group $G_A$ of $\phi_A$ is the subgroup of $S_{2n}$ generated by $\sigma_0,\sigma_{\infty},\sigma_1,\tau_1, \ldots,\tau_{k}$; by assumption, $G_A$ preserves a $2m$-partition of $\{1, \ldots ,2n\}$. As mentioned before, the only $2m$-partition preserved by $\sigma_{\infty}$ is $\mathcal F_{2m}=\{F_1,\ldots, F_{2m}\}$, where, for every $h=1,\ldots, 2m$, we defined $F_h=\{j\in \{1, \ldots, 2n\} : j\equiv h \mod 2m\}$; we can then assume that $G_A$ preserves this specific partition.
%For the rest of the section, as $m$ is fixed, we drop the index $2m$ and write $F_h$ instead of $F_h^{2m}$.

% We want to prove that $A$ is the $m$-th power of another solution $A'$ of the same Pell equation, i.e. $A^2=f_m((A')^2)$, with $A'^2-DB'^2=1$ for some polynomial $B' \neq 0$. \\

For $h=1, \ldots, m$ we let $E_h=\{ j\in \{1, \ldots, 2n\} : j\equiv h \mod m \}$. Then, $\mathcal E_m=\{E_1, \ldots, E_m\}$ gives another partition of $\{1,\ldots, 2n\}$ and we have that $E_h=F_h \cup F_{m+h}$. We start by proving the following lemma.

\begin{lemma}\label{lemma:structure_G}
Suppose $G_A$ preserves the $2m$-partition $\mathcal F_{2m}$; then, $G_A$ preserves the $m$-partition $\mathcal E_m$. More precisely, the action of $\sigma_0,\sigma_{\infty},\sigma_1,\tau_1, \ldots,\tau_{k}$ on $\mathcal F_{2m}$ and $\mathcal E_m$ is the following:
\begin{equation} 
\begin{cases}
\sigma_{\infty}(F_1)=F_{2m} \mbox{ and }  \sigma_{\infty}(F_i)= F_{i-1} \ \forall{i=2,\ldots, 2m};\\
\sigma_1(F_{2m})=F_{2m} \mbox{ and }  \sigma_1(F_i)=F_{2m-i}\ \forall i=1, \ldots, 2m-1; \\
\sigma_0(F_i)=F_{2m-i+1} \quad \forall i=1,\ldots, 2m; \\
\tau_j(F_i)=F_i \ \forall i=1, \ldots, 2m \mbox{ and } j=1,\ldots, k.
\end{cases} 
\end{equation}
and:
\begin{equation} 
\begin{cases}
\sigma_{\infty}(E_1)=E_{m} \mbox{ and }  \sigma_{\infty}(E_i)= E_{i-1} \ \forall{i=2,\ldots, m};\\
\sigma_1(E_m)=E_{m} \mbox{ and }  \sigma_1(E_i)=E_{m-i}\ \forall i=1, \ldots, m-1; \\
\sigma_0(E_i)=E_{m-i+1}\quad \forall i=1,\ldots, m; \\
\tau_j(E_i)=E_i \ \forall i=1, \ldots, m \mbox{ and } j=1,\ldots, k.
\end{cases} 
\end{equation}
In particular, $\mathrm{Fix}(\sigma_1):=\{a\in \{1,\ldots, 2n\}\ |\ \sigma_1(a)=a\}\subseteq E_m$. 
\end{lemma}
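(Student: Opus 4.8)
The plan is to study the homomorphism $\pi\colon G_A\to \mathrm{Sym}(\mathcal F_{2m})\cong S_{2m}$ recording the action on the $2m$ blocks, and to pin down the images $\bar\sigma_0,\bar\sigma_\infty,\bar\sigma_1,\bar\tau_j$ of the generators one at a time; the formulas for $\mathcal E_m$ and the inclusion $\mathrm{Fix}(\sigma_1)\subseteq E_m$ will then be formal consequences. The starting observation is that every partition of $\{1,\dots,2n\}$ invariant under the single $2n$-cycle $\sigma_\infty$ is a residue partition (blocks $=$ residue classes modulo some $\ell\mid 2n$), and that these are totally ordered by refinement. Hence $\mathcal F_{2m}$ is the partition modulo $2m$, which forces $\bar\sigma_\infty$ to be the $2m$-cycle $i\mapsto i-1$, and shows that the candidate coarsening $\mathcal E_m$ (residues modulo $m$) and the $G_A$-invariant two-block partition $\{P_+,P_-\}$ attached to the square factorization $\phi_A=\phi\circ A$ (necessarily the residues modulo $2$, by the same uniqueness) lie in the same refinement chain; in particular $\mathcal F_{2m}$ refines both.

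Next I would show $\bar\tau_j=\mathrm{id}$ for every $j$, i.e.\ each $\tau_j$ fixes every block. This is the one step where the hypothesis $\frac nm\ge d$ is essential: if some $\bar\tau_j$ moved a block it would move a whole block-cycle of length $\ge 2$, hence at least $2\cdot\frac{2n}{2m}=2\frac nm\ge 2d$ indices, and a permutation moving $M$ indices contributes at least $M/2$ to the branching; this already exceeds the total branching $\le d-1$ allowed away from $0,1,\infty$ (Section \ref{ABm}), a contradiction. Consequently, writing $\phi_A=\Psi\circ\chi$ for the decomposition attached to $\mathcal F_{2m}$ by Proposition \ref{prop:correspondence_extensions_decompositions}, the outer map $\Psi$ has degree $2m$, is a polynomial totally ramified over $\infty$, and is branched only over $\{0,1,\infty\}$, with monodromy group $\pi(G_A)$.

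The heart of the matter is the determination of $\bar\sigma_0$ and $\bar\sigma_1$. Here I would exploit the square structure again: the degree-$2$ map $\phi(t)=t^2$ is ramified over $0$ and unramified over $1$, so $\sigma_0$ exchanges $P_+,P_-$ while $\sigma_1$ preserves each. Passing to blocks, $\bar\sigma_1$ preserves the block index modulo $2$ while $\bar\sigma_0$ reverses it; being parity-reversing, $\bar\sigma_0$ is fixed-point-free and all its cycles have even length. Combining this with the relation $\bar\sigma_0\,\bar\sigma_\infty\,\bar\sigma_1=\mathrm{id}$, the fact that $\bar\sigma_1$ fixes the block $F_{2m}$ (it contains the common fixed index $2n$), Riemann--Hurwitz for $\Psi$, and the precise count that $\sigma_1$ has exactly $2d$ fixed indices distributed among $\bar\sigma_1$-fixed blocks of size $\frac nm\ge d$, one pins down the ramification types of $\Psi$ over $0$ and $1$ as $(2,\dots,2)$ and $(1,1,2,\dots,2)$. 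By Bilu's classification of polynomials with two finite critical values (\cite[Theorems 3.4 and 3.6]{Bilu99}, exactly as in Proposition \ref{dihedral}), this identifies $\pi(G_A)$ with the dihedral symmetry group of the $2m$-gon, in which $\bar\sigma_1$ and $\bar\sigma_0$ are the two reflections; reading them off in the labelling normalized by $\bar\sigma_\infty=(i\mapsto i-1)$ and $\bar\sigma_1(F_{2m})=F_{2m}$ gives $\bar\sigma_1(F_i)=F_{2m-i}$ and $\bar\sigma_0(F_i)=F_{2m-i+1}$. I expect this type-determination to be the main obstacle, precisely because a general degree-$2m$ polynomial branched over $\{0,1,\infty\}$ need \emph{not} be dihedral; it is the square factorization together with the exact fixed-point count that excludes the larger-monodromy alternatives.

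Finally, the $\mathcal E_m$ statements follow by descending along $\mathcal F_{2m}\to\mathcal E_m$: since $\bar\sigma_\infty$, $\bar\sigma_0$ and $\bar\sigma_1$ all respect the pairing $E_h=F_h\cup F_{m+h}$ (the rotation and both reflections of the $2m$-gon induce symmetries of the $m$-gon), $G_A$ preserves $\mathcal E_m$, and the explicit action is obtained by reducing the formulas above modulo $m$. The inclusion $\mathrm{Fix}(\sigma_1)\subseteq E_m$ is then immediate: every fixed index of $\sigma_1$ lies in a $\bar\sigma_1$-fixed block, and the only such blocks are $F_m$ and $F_{2m}$, whose union is exactly $E_m$.
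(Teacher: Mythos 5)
Your first half agrees with the paper's proof: the observation that a partition invariant under the $2n$-cycle $\sigma_\infty$ must be a residue partition, the deduction $\bar\sigma_1(F_{2m})=F_{2m}$ from $\sigma_1(2n)=2n$, and the counting argument showing $\bar\tau_j=\mathrm{id}$ (a moved block forces $\tau_j$ to displace at least $2n/m\ge 2d$ indices against a budget of $2d-2$) are exactly the paper's steps. For the determination of $\bar\sigma_0$ and $\bar\sigma_1$ you then diverge: the paper chases the relation $\bar\sigma_0\bar\sigma_\infty\bar\sigma_1=\mathrm{id}$ directly on blocks starting from $\bar\sigma_1(F_{2m})=F_{2m}$, whereas you pass to the quotient polynomial $\Psi$ of degree $2m$, try to pin down its ramification over $0$ and $1$, and invoke Bilu's dihedral classification. (Once the cycle types $(2,\dots,2)$ and $(1,1,2,\dots,2)$ are known, the chase already yields the explicit formulas, so the appeal to \cite[Theorems 3.4 and 3.6]{Bilu99} is not really needed; but the route is legitimate.)

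The genuine gap is in the step you yourself flag as the main obstacle: your listed ingredients do not suffice to pin down the types of $\bar\sigma_0$ and $\bar\sigma_1$. Riemann--Hurwitz for $\Psi$, the parity constraints from the square structure, $\bar\sigma_1(F_{2m})=F_{2m}$, and the fact that the $2d$ fixed indices of $\sigma_1$ lie in $\bar\sigma_1$-fixed blocks of size $n/m\ge d$ are all satisfied by non-dihedral configurations. Concretely, $\Psi=\bigl(c(t-a)^m-1\bigr)^2$ is a square of a degree-$m$ polynomial, totally ramified over $\infty$ and branched only over $\{0,1,\infty\}$ with types $(2,\dots,2)$ over $0$ and $(1,\dots,1,m)$ over $1$; here $\bar\sigma_1$ preserves parity, fixes all $m$ even-indexed blocks (so certainly $F_{2m}$), and the $2d\le n$ fixed points of $\sigma_1$ fit comfortably inside those blocks. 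Nothing you cite about $\Psi$ alone excludes this. What excludes it is an argument back up in $S_{2n}$: a cycle of $\sigma_1$ lying over a block-cycle of $\bar\sigma_1$ of length $\ell>2$ has length a multiple of $\ell$ (and even), so the $\ell\cdot(n/m)$ indices in those blocks are covered by few long cycles; comparing the resulting cycle count of $\sigma_1$ (and likewise of $\sigma_0$) with the Riemann--Hurwitz budget of $\phi_A$ itself — which forces $c(\sigma_0)+c(\sigma_1)\ge 2n+1$ with $c(\sigma_0)\le n$ — and using $n/m\ge d$ is what rules out any block-cycle of length $\neq 2$ and any third fixed block of $\bar\sigma_1$. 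This lifting-and-counting argument is the actual content of the step, and it is absent from your proposal; without it the identification of $\pi(G_A)$ with the dihedral group, and hence the displayed formulas, is not established.
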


\begin{proof}
By construction $\sigma_1(2n)=2n$, thus we have $\sigma_1(F_{2m})=F_{2m}$. Moreover, we have that $|F_h|=n/m \ge d$ for all $h$, and for all $i$ at most $2d-2$ indexes are not fixed by $\tau_i$. This implies that $\tau_i(F_h)=F_h$ for every $i=1, \ldots, k$ and $h=1, \ldots, 2m$.

In what follows the indexes of the $F_h$ are considered modulo $2m$.
Using $\sigma_0\sigma_{\infty}\sigma_1\prod_{i=1}^{k}\tau_i=\mathrm{id}$, we deduce that $\sigma_0\sigma_{\infty}\sigma_1(F_h)=F_h$ for every $h=1, \ldots, 2m$. As we know that $\sigma_{\infty}(
F_{h})=F_{h-1}$, this gives that
$$\begin{cases} 
\sigma_0(F_{i_1})=F_{i_2} \ \Rightarrow\ \sigma_1(F_{i_1})=F_{{i_2}-1}; \\ 
\sigma_1(F_{j_1})=F_{j_2} \ \Rightarrow\  \sigma_0(F_{j_2})=F_{i_1+1}.
\end{cases} $$
Since $\sigma_1(F_{2m})=F_{2m}$, we have that
$$ \sigma_1(F_{2m})=F_{2m} \quad \mbox{ and } \quad \sigma_1(F_i)=F_{2m-i}\ \forall i=1, \ldots, 2m-1, $$
and $$ \sigma_0(F_i)=F_{2m-i+1} \quad \forall i=1,\ldots, 2m. $$ 
Now, for every $h=1,\ldots, m$, the set $E_h$ is equal to $F_h \cup F_{m+h}$. Using the previous relations, we have that $\sigma_1(E_h)=F_{2m-h}\cup F_{m-h}=E_{m-h}$; $\sigma_0(E_h)=F_{2m-h+1}\cup F_{m-h+1}=E_{m-h+1}$, $\tau_i$ stabilizes $\{E_1, \ldots, E_m\}$ for evey $i=1, \ldots, d-1$ and $\sigma_{\infty}(E_h)=F_{h-1}\cup F_{m+h-1}=E_{h-1}$, which concludes the proof.
\end{proof}

%\begin{remark} \label{rmk:char}
%In the case $\sigma_{\infty}=(2n, 2n-1, \ldots, 1)$ and one of the two indexes fixed by $\sigma_1$ and by each of the $\tau_i$ is $2n$, we can say something more about the action of $G_A$ on the partitions $\mathcal{F}_{2m}$ and $\mathcal{E}_m$. Indeed, the previous lemma shows that, when the partition $\mathcal F_{2m}$ is preserved by the action of $G_A$, then $ \sigma_0, \sigma_{\infty},$ $\sigma_1, \tau_1, \ldots, \tau_{d-1}$ have a prescribed behaviour with respect to the partitions $\mathcal F_{2m}$ and $\mathcal E_m$ which is the following:
%\begin{equation} 
%\begin{cases}
%\sigma_{\infty}(F_1)=F_{2m} \mbox{ and }  \sigma_{\infty}(F_i)= F_{i-1} \ \forall{i=2,\ldots, 2m};\\
%\sigma_1(F_{2m})=F_{2m} \mbox{ and }  \sigma_1(F_i)=F_{2m-i}\ \forall i=1, \ldots, 2m-1; \\
%\sigma_0(F_i)=\sigma_0(F_{2m-i+1}) \quad \forall i=1,\ldots, 2m; \\
%\tau_j(F_i)=(F_i) \ \forall i=1, \ldots, 2m \mbox{ and } j=1,\ldots, k.
%\end{cases} 
%\end{equation}
%and:
%\begin{equation} 
%\begin{cases}
%\sigma_{\infty}(E_1)=E_{m} \mbox{ and }  \sigma_{\infty}(E_i)= E_{i-1} \ \forall{i=2,\ldots, m};\\
%\sigma_1(E_m)=E_{m} \mbox{ and }  \sigma_1(E_i)=E_{m-i}\ \forall i=1, \ldots, m-1; \\
%\sigma_0(E_i)=\sigma_0(E_{m-i+1}) \quad \forall i=1,\ldots, m; \\
%\tau_j(E_i)=(E_i) \ \forall i=1, \ldots, m \mbox{ and } j=1,\ldots, k.
%\end{cases} 
%\end{equation}
%In particular, Fix$(\sigma_1):=\{a\in \{1,\ldots, 2n\}\ |\ \sigma_1(a)=a\}\subseteq E_m$. 
%\end{remark}

For $G < S_{2n}$ and $C_1,\ldots, C_{\ell}\subseteq \{1, \ldots, 2n\}$, we define
$$ \Stab_G(C_1, \ldots, C_{\ell}):=\bigcap_{i=1}^{\ell} \Stab_G(C_i), $$
where, for every $i=1, \ldots, \ell$,
$$ \Stab_G(C_i)=\{\sigma\in G : \sigma(C_i)=C_i \}. $$
%\begin{lemma}[Case $m=2$] \label{m=2}
%Assume that $G_A$ preserves $\mathcal F_4$ and denote by $S:=\Stab(2n)$; then, there exist $H_1, H_2$ subgroups of $G_A$ such that $S\subseteq H_1$, $H_1 \lhd\, H_2$, $H_2\lhd\, G_A$ and $[G_A:H_2]=[H_2:H_1]=2$. 
%\end{lemma}
%
%\begin{proof}
%As by assumption $G_A$ preserves the partition $\mathcal{F}_4$, it naturally induces an action on $\{F_1, \ldots, F_4\}$. Recall that $E_1=F_1 \cup F_3$ and $E_2=F_2 \cup F_4$. Define $H_1=\rm{Stab}(F_4)$ and $H_2=\rm{\Stab}(E_2)$. Notice that $2n\in F_4 \subset E_2$; hence, $S\subseteq H_1 \subseteq H_2$. Moreover, the group $G_A$ acts transitively on $\{1,\ldots, 2n\}$, so also the actions on $\{F_1, \ldots, F_4\}$ and on $\{E_1,\ E_2\}$ are transitive. From the equalities $|G_A|=|\Stab(F_4)|\cdot|\Orb(F_4)|$ and $|G_A|=|\Stab(E_2)|\cdot|\Orb(E_2)|$, we have that $|G_A|/|H_2|=2$ and $|G_A|/|H_1|=4$, which concludes the proof. 
%\end{proof}
%We denote by $D_{2m}$ the dihedral group of order $2m$. 
First, we prove this preliminary lemma.
\begin{lemma} \label{lemma:subgroups}
Let us assume that $G_A$ preserves $\mathcal F_{2m}$ and set $S:=\Stab_{G_A}(\{2n\})$. Then, there exist $H_1, H_2$ subgroups of $G_A$ satisfying $S \subseteq H_1< H_2 < G_A$ with $[G_A:H_2]=m$ and $[H_2:H_1]=2$. Moreover, if $m>2$, then there exists a normal subgroup $K$ of $G_A$ containing $S$ and contained in $H_2$ such that $G_A/K \cong D_{2m}$.
\end{lemma}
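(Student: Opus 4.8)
The plan is to realise all three groups as stabilisers, respectively as a core, of the two invariant partitions supplied by Lemma \ref{lemma:structure_G}, and to read off the indices and the quotient directly from the explicit block action recorded there. Throughout I would work with the two partitions $\mathcal F_{2m}$ and $\mathcal E_m$ and use freely that $2n\in F_{2m}\subseteq E_m$.

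First I would build the tower $S\subseteq H_1<H_2<G_A$. The natural choice is $H_2:=\Stab_{G_A}(E_m)$ and $H_1:=\Stab_{G_A}(F_{2m})$. By Lemma \ref{lemma:structure_G} the permutation $\sigma_\infty$ cycles $E_1,\ldots,E_m$ in a single $m$-cycle and $F_1,\ldots,F_{2m}$ in a single $2m$-cycle, so $G_A$ acts transitively on each partition; orbit--stabiliser then gives $[G_A:H_2]=m$ and $[G_A:H_1]=2m$. Since each $g\in G_A$ permutes the blocks, fixing the point $2n$ forces $g(F_{2m})=F_{2m}$, and fixing the block $F_{2m}\subseteq E_m$ forces $g(E_m)=E_m$; this yields $S\subseteq H_1\subseteq H_2$ and hence $[H_2:H_1]=2m/m=2$. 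Strictness of both inclusions is automatic from the indices, $2$ and $m\ge 2$.

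Next, for the case $m>2$, I would take $K$ to be the kernel of the action of $G_A$ on the $m$-partition $\mathcal E_m$, that is $K:=\mathrm{core}_{G_A}(H_2)=\bigcap_{g\in G_A} gH_2g^{-1}$, which is normal by construction and contained in $H_2$. Its quotient is the image of $G_A$ in the symmetric group on $\{E_1,\ldots,E_m\}$, and by Lemma \ref{lemma:structure_G} this image is generated by the $m$-cycle $E_i\mapsto E_{i-1}$ coming from $\sigma_\infty$ together with the involution $E_i\mapsto E_{m-i}$ coming from $\sigma_1$ (the $\tau_j$ acting trivially). A rotation of order $m$ and such a reflection generate the dihedral group of order $2m$, so $G_A/K\cong D_{2m}$; the hypothesis $m>2$ is used precisely here, to ensure $\sigma_1$ acts non-trivially so that the quotient is genuinely dihedral rather than degenerating (for $m=2$ the reflection $E_i\mapsto E_{m-i}$ becomes trivial). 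This also matches $\Mon(f_m)=D_{2m}$ from Proposition \ref{dihedral}, which is what one expects since $K$ is the monodromy kernel of the degree-$m$ subcover.

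The hard part will be the containment of $S$ relative to $K$. Fixing the point $2n\in E_m$ only forces an element of $S$ to stabilise the single block $E_m$, so the immediate output of the block analysis is $S\subseteq H_2$; upgrading this to $S\subseteq K$ requires that \emph{every} element of $S$ act trivially on all of $\mathcal E_m$, i.e. that the image of $S$ in $D_{2m}$ (which lands in the vertex stabiliser $\Stab_{D_{2m}}(E_m)$, a group of order two) be trivial. This single $\Z/2$ of indeterminacy is the crux: it must be resolved by a careful generator-by-generator analysis of the $S$-action on $\mathcal E_m$, confronting exactly which reflection-type generators fixing $2n$ (notably $\sigma_1$ and the $\tau_j$) do against the relations $\mathrm{Fix}(\sigma_1)\subseteq E_m$ and $\sigma_0\sigma_\infty\sigma_1\tau_1\cdots\tau_k=\mathrm{id}$ from Lemma \ref{lemma:structure_G}. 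I expect this reconciliation of the point-stabiliser $S$ with the block-kernel $K$ to be the only genuinely delicate step, the rest being formal consequences of transitivity and the orbit--stabiliser theorem.
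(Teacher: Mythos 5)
Your construction is the same as the paper's: $H_1=\Stab_{G_A}(F_{2m})$, $H_2=\Stab_{G_A}(E_m)$, the index computation via transitivity of the two block actions and orbit--stabiliser, and $K=\bigcap_i\Stab_{G_A}(E_i)=\mathrm{core}_{G_A}(H_2)$ with $G_A/K$ identified as $D_{2m}$ through the images of $\sigma_\infty$ and $\sigma_1$ (the paper verifies the relation $srsr=\id$ explicitly, which your ``a rotation and such a reflection'' phrase is implicitly doing; that check is immediate from Lemma \ref{lemma:structure_G}). Everything you actually prove is correct and coincides with the paper's argument.

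The step you isolate as the crux, $S\subseteq K$, is indeed the right thing to worry about --- but it cannot be ``resolved by a careful generator-by-generator analysis'', because it is false, and the paper's own proof silently omits it. Indeed $\sigma_1\in S$: the normalisation preceding the lemma fixes the labelling so that $\sigma_1(2n)=2n$. Yet for $m>2$ Lemma \ref{lemma:structure_G} gives $\sigma_1(E_i)=E_{m-i}$ for $i=1,\dots,m-1$, a non-trivial permutation of the blocks, so $\sigma_1\notin K$. Your own observation already pins this down: the image of $S$ in $G_A/K\cong D_{2m}$ lands in the order-two stabiliser of the block $E_m$, and that order-two subgroup is generated precisely by the image of $\sigma_1$, which lies in $S$; so the image of $S$ is the whole $\Z/2\Z$, not trivial. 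The clause ``containing $S$'' in the statement is therefore a misstatement; what is true, and what you proved, is $S\subseteq H_1\subseteq H_2$ and $K\lhd G_A$ with $K\subseteq H_2$. This does no damage downstream: in the proof of Proposition \ref{suff_prop}, $K$ is used only to identify the Galois closure of $L_2/T$ as the fixed field of $\mathrm{core}_{G_A}(H_2)$ and to conclude $\Mon(h_2)\cong D_{2m}$, for which normality, $K\subseteq H_2$ and $G_A/K\cong D_{2m}$ suffice (the assertion there that the corresponding field $F$ lies inside $\C(t)$ is likewise inaccurate but inessential). So the correct resolution is to prove exactly what you did prove and discard the attempt to establish $S\subseteq K$.
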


\begin{proof}
As by assumption $G_A$ preserves $\mathcal{F}_{2m}$, by Lemma \ref{lemma:structure_G} it preserves also $\mathcal{E}_m$, hence it induces an action on $\{F_1, \ldots, F_{2m}\}$ and on $\{E_1, \ldots, E_m\}$. 

Let us define $H_1:=\Stab_{G_A}(F_{2m})$ and $H_2:=\Stab_{G_A}(E_m)$.  As $2n \in F_{2m} \subseteq E_m$, by Lemma \ref{lemma:structure_G} it follows that $S\subseteq H_1 \subseteq H_2$. 

The group $G_A$ acts transitively on $\{1,\ldots, 2n\}$; therefore, the action on $\{F_1, \ldots, F_{2m}\}$ and on $\{E_1, \ldots, E_m\}$ is transitive as well, so the orbits of $F_i$ and $E_j$ have cardinality $2m$ and $m$ respectively. This implies that $|G_A|/|H_2|=|G_A\cdot E_m|=m$ and $|G_A|/|H_1|=|G_A\cdot F_{2m}|=2m$, where $G_A \cdot C$ denotes the orbit of $C$ with respect to the action of $G_A$. This proves that $[G_A:H_2]=m$ and $[H_2:H_1]=2$ as wanted.

We now prove the second part of the statement. 

For $m>2$ let us define $K:= \Stab_{G_A}(E_1, \ldots, E_m)$. Note that $K \subseteq H_2$ and, if $m >2$, then $K\neq H_2$ (as $\sigma_1 \in H_2$ and $\sigma_1 \not \in K$). To conclude the proof, we have to show that $K$ is a normal subgroup of $G_A$ and $G_A/K\cong D_{2m}$.

First, let us prove that $K\lhd G_A$. Take $h\in K$ and $g\in G_A$; then, if $g(E_i)=E_{j_i}$ for some $j_i$, we have that  
\[ 
g^{-1}hg(E_i)=g^{-1}h(E_{j_i})=g^{-1}(E_{j_i})=E_i \quad \mbox{for all } i=1,\ldots, m, 
\]
so $g^{-1}hg \in K$ as wanted. 

%Let us now prove that $K$ is the only normal subgroup of $G_A$ contained in $H_2$ of index $2m$.

%Suppose by contraddiction that there exists another subgroup $K'$ with the same properties and $k$ not in $K$; then, there exists $j_1\neq j_2$ such that $k(E_{j_1})=E_{j_2}$. As the action of $G_A$ is transitive on $E_1,\ldots, E_m$, then there exists $l\in G_A$ such that $l(E_m)=E_{i_1}$. Then,
%$$ l^{-1}kl(E_m)=l^{-1}k(E_{i_1})=l^{-1}(E_{i_2})\neq E_m, $$  
%as $l^{-1}(E_{i_1})=E_m$. So, $gl^{-1}kl$ doesn't stabilize $E_m$, so it is not contained in $K'$, which is a contraddiction. Conseguently, $k$ has to stabilize the partition $\{E_1, \ldots, E_m\}$, so $k\in K$, which proves the assertion.

Finally, we show that $G_A/K\cong D_{2m}$. 
As $G_A$ induces an action on $\{E_1,\ldots, E_m\}$, we can define a homomorphism $\varphi: G_A \rightarrow S_m$ given by $\varphi(\alpha)=\beta$ where $\beta \in S_m$ is defined by $\beta(i)=j$ if $\alpha(E_i)=E_j$. Since $K=\Stab_{G_A}(E_1, \ldots, E_m)$, then $\varphi$ induces an injective homomorphism $\tilde \varphi: G_A/K \rightarrow S_m$. We want now to prove that $\tilde \varphi(G_A/K)\cong D_{2m}$.
By Lemma \ref{lemma:structure_G}, $\tilde \varphi(\tau_i)=\mathrm{id}$ for every $i=1, \ldots, k$, so $\tilde \varphi(G_A/K)$ is generated by the images of $\sigma_{\infty}$ and $\sigma_1$, which we denote by $r$ and $s$ respectively. We have to prove that $r^m=\mathrm{id}$, $s^2=\mathrm{id}$ and $srsr=\mathrm{id}$. Notice that, if $\sigma_{\infty}=(2n, 2n-1, \ldots, 1)$, then $r=(m,m-1, \ldots, 1)$, so $r^m=\mathrm{id}$. Moreover, by Lemma \ref{lemma:structure_G}, $s$ is a product of traspositions, so it has order $2$ as wanted. 
Let us finally prove that $srsr=\mathrm{id}$. By Lemma \ref{lemma:structure_G} we have that $\sigma_1(E_m)=E_m$, $\sigma_1(E_i)=E_{m-i}$ for all $i=1,\dots, m-1$ and $\sigma_{\infty}(E_1)=E_{m}$ and $\sigma_{\infty}(E_j)=E_{j-1}$ for all $j=2, \ldots, m$. So, for every $i=1,\ldots, m$, 
\[
srsr(E_i)=rsr(E_{m-i})=sr(E_{m-i-1})=r(E_{i+1})=E_i.
\]
This shows that $G_A/K\cong D_{2m}$, concluding the proof.
\end{proof}

Using the previous lemma, we can finally prove the following result about the polynomial decomposition of $\phi_A$, which concludes the proof of Theorem \ref{characterization}.

\begin{proposition} \label{suff_prop}
Suppose $G_A$ preserves $\mathcal{F}_{2m}$ with $m\ge 2$ and $n/m \ge d$. Then, $\phi_A=f_m(\phi_{A'})$, where $A'$ is another solution of the same Pell-Abel equation.
\end{proposition}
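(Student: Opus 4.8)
The plan is to turn the hypothesis that $G_A$ preserves $\mathcal F_{2m}$ into a genuine polynomial decomposition of $\phi_A$ whose outer factor is $f_m\circ\phi$. Let $S=\Stab_{G_A}(\{2n\})$ be the point stabilizer, so that in the monodromy description $\C(x)=L^{S}$ and $\C(\phi_A(x))=L^{G_A}$. By Lemma~\ref{lemma:subgroups} the subgroup $H_1=\Stab_{G_A}(F_{2m})$ contains $S$ and has index $[G_A:H_1]=2m$. Feeding the intermediate field $L^{H_1}$ into Proposition~\ref{prop:correspondence_extensions_decompositions} yields a decomposition $\phi_A=G\circ A'$ into polynomials with $\deg G=[G_A:H_1]=2m$ and $\deg A'=[H_1:S]=n/m$; here $A'$ really is a polynomial because $\phi_A$ is totally ramified over $\infty$, which forces $G$ and hence $A'$ to be so as well. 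The whole point will be to identify $G$, up to a linear change of variable that we absorb into $A'$, with $f_m\circ\phi$.

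Next I would pin down $G$. Its monodromy group is the image of $G_A$ acting on the $2m$ cosets $G_A/H_1$, that is, on the blocks $F_1,\dots,F_{2m}$. By Lemma~\ref{lemma:structure_G} this action sends $\sigma_\infty$ to the $2m$-cycle $F_i\mapsto F_{i-1}$, sends $\sigma_0$ and $\sigma_1$ to the reflections $F_i\mapsto F_{2m+1-i}$ and $F_i\mapsto F_{2m-i}$ (indices modulo $2m$), and sends every $\tau_j$ to the identity; since a $2m$-cycle together with an inverting reflection generate the dihedral group of order $4m$, we get $\Mon(G)\cong D_{4m}$. The same formulas show that $G$ is branched exactly over $0,1,\infty$, with $\infty$ totally ramified, the value $0$ of type $(2,\dots,2)$ (the reflection $\sigma_0$ fixes no block) and the value $1$ of type $(1,1,2,\dots,2)$ (the reflection $\sigma_1$ fixes exactly $F_m$ and $F_{2m}$). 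Now $f_m\circ\phi=\phi\circ T_m$ is linearly equivalent to the Chebychev polynomial $T_{2m}$, hence a degree-$2m$ polynomial with dihedral monodromy $D_{4m}$ and, by the Dickson--Chebychev analysis already used in Propositions~\ref{dihedral} and~\ref{prop:ram_f_m}, with exactly the same branch data: value $0$ of type $(2,\dots,2)$ and value $1$ of type $(1,1,2,\dots,2)$. Bilu's classification therefore gives $G=\ell_1\circ(f_m\circ\phi)\circ\ell_2$ for linear $\ell_1,\ell_2$. Matching branch sets forces $\ell_1$ to fix $\infty$ and to permute $\{0,1\}$, so $\ell_1\in\{\,t,\,1-t\,\}$; but the two finite branch values carry the distinct cycle types $(2,\dots,2)$ and $(1,1,2,\dots,2)$, which rules out the swap $1-t$ and leaves $\ell_1=\id$. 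Absorbing $\ell_2$ by replacing $A'$ with $\ell_2\circ A'$, I obtain $G=f_m\circ\phi$ and hence $\phi_A=G\circ A'=f_m\bigl((A')^2\bigr)=f_m(\phi_{A'})$.

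It then remains to verify that $A'$ solves the same Pell--Abel equation. Proposition~\ref{prop:ram_f_m} shows that over the value $1$ the polynomial $f_m(w)-1$ has a simple root at $w=1$ (and, when $m$ is even, a second simple root at $w=0$), every other root being double; thus $f_m(w)-1=c\,(w-1)\,w^{\varepsilon}\,Q(w)^2$ with $\varepsilon\in\{0,1\}$ and $c\in\C^\times$. Substituting $w=(A')^2$ and using $A^2-1=DB^2$ gives $DB^2=c\,\bigl((A')^2-1\bigr)\,R^2$ with $R=(A')^{\varepsilon}Q((A')^2)$. Since $D$ is squarefree and coprime to $B$ (the $2d$ fixed indices of $\sigma_1$ being exactly the simple roots of $D$ in $A^2-1=DB^2$), comparing squarefree parts of the two sides shows that $D$ agrees, up to a constant, with the squarefree part of $(A')^2-1$; hence $(A')^2-1=D(B')^2$ for some $B'\in\C[t]$. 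The degree identity $\deg B'=\tfrac{n}{m}-d$ is precisely where the hypothesis $n/m\ge d$ enters: it guarantees $\deg B'\ge 0$, so that $B'$ is an honest polynomial and $(A',B')$ is a genuine non-trivial solution. Finally $A^2=f_m((A')^2)=T_m(A')^2$ forces $A=\pm T_m(A')$, so $(A,B)$ is indeed the $m$-th power of $(A',B')$.

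The crux, and the step I expect to be hardest, is the exact identification of the degree-$2m$ outer factor $G$ with $f_m\circ\phi$, rather than merely up to linear equivalence. Carrying the entire linear ambiguity onto the domain side and then eliminating the residual target symmetry through the asymmetry of the cycle types over $0$ and $1$ is what makes the squaring structure appear automatically: it lets one avoid proving separately that the inner factor of $\phi_A$ is a perfect square, a point that would otherwise be delicate precisely when $m$ is even and $f_m$ is itself a square.
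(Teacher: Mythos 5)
Your proof is correct, and although it shares the paper's overall strategy (Galois correspondence applied to the block stabilizers coming from Lemmas \ref{lemma:structure_G} and \ref{lemma:subgroups}, identification of the outer factor through Bilu's classification, elimination of the linear ambiguities, and a final check that $A'$ solves the same equation), the technical route is genuinely different and buys real simplifications. The paper uses the full chain $\Stab_{G_A}(\{2n\})<H_1<H_2<G_A$ to write $\phi_A=h_2\circ h_1\circ z$ with $\deg h_2=m$ and $\deg h_1=2$; it identifies $h_2$ with $f_m$ via the normal subgroup $K$, the Galois closure of $L_2/T$ and the quotient $G_A/K\cong D_{2m}$; it must then pin down the outer linear factor $\alpha_1$ by a ramification count over $\xi_0,\xi_1$ (with a case split on the parity of $m$, since the two finite branch types of $f_m$ coincide when $m$ is odd), and it needs a separate Riemann--Hurwitz argument to show that the quadratic middle factor is, after normalization, the squaring map. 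You instead use only the index-$2m$ subgroup $H_1=\Stab_{G_A}(F_{2m})$, so the outer factor $G$ already has degree $2m$; its branch data --- $(2,\dots,2)$ over $0$ and $(1,1,2,\dots,2)$ over $1$, read off correctly from the block action of Lemma \ref{lemma:structure_G} --- feeds into the same theorem of Bilu that the paper invokes for $f_m$, now applied in degree $2m$ where $f_m\circ\phi=\phi\circ T_m$ is linearly equivalent to $T_{2m}$. Because those two cycle types are always distinct, the residual target-side linear factor $\ell_1$ is forced to fix $0$, $1$ and $\infty$ with no parity discussion and no ramification counting, and the squaring structure of the inner factor comes for free, exactly as you point out; this collapses two delicate normalization steps of the paper into one clean observation. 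Your closing verification that $A'$ is a solution, via the factorization $f_m(w)-1=c(w-1)w^{\varepsilon}Q(w)^2$ and a comparison of squarefree parts, is also sound and slightly more explicit than the paper's argument with the $2d$ simple preimages of $1$. Two small remarks: the parenthetical claim that $D$ is coprime to $B$ is not needed (the squarefree-part comparison works regardless, since a common root would contribute odd multiplicity to $DB^2$ anyway), and the hypothesis $n/m\ge d$ is not only ensuring $\deg B'\ge 0$ at the end --- it is already indispensable in Lemma \ref{lemma:structure_G}, which you correctly cite, to force the $\tau_j$ to act trivially on the blocks.
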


\begin{proof}
By Lemma \ref{lemma:subgroups}, there exists a chain of groups 
$$ \Stab_{G_A}(\{2n\}) < H_1 < H_2 < G_A,  $$
where $[G_A:H_2]=m$ and $[H_2:H_1]=2$, where we recall that $H_1=\Stab_{G_A}(F_{2m})$ and $H_2=\Stab_{G_A}(E_m)$. By Galois theory, this implies that there exists a tower of subfields of $\C(t)$ 
$$ T:=\C(\phi_A) \subset L_2 \subset L_1 \subset \C(t), $$
with $[L_2:\C(\phi_A)]=m$ and $[L_1:L_2]=2$.
Together with Proposition \ref{prop:correspondence_extensions_decompositions}, this implies that 
$$ \phi_A=h_2 \circ h_1 \circ z, $$
where $h_1,h_2$ and $z$ are polynomials with $\deg h_1=2$ and $\deg h_2=m$. 

Let us first prove that $h_2$ is linearly equivalent to $f_m$. If $m=2$, this is trivial since polynomials of degree $2$ are linearly equivalent to each other; so, let us assume that $m>2$. 
By Lemma \ref{lemma:subgroups}, the subgroup $K=\Stab_{G_A}(E_1, \ldots, E_m)$ is a normal subgroup of $G_A$ contained in $H_2$ and such that $G_A/K\cong D_{2m}$. Then, $K$ corresponds to a field $F\subseteq \C(t)$ containing $L_2$ and such that $F/T$ is a normal extension.
%Let us now consider the field $F:=\Fix(K)\subseteq \C(t)$; then $F$ contains $\L_2$ and $F/T$ is a normal extension since $K$ is a normal subgroup of $G_A$. 
We want to show that $F/T$ is the Galois closure of $L_2/T$.

As the action of $G_A$ on $\{E_1, \ldots, E_m\}$ is transitive, the subgroups $\Stab_{G_A}(E_1), \ldots, \Stab_{G_A}(E_m)$ are exactly all the conjugates of $\Stab_{G_A}(E_m)=\Gal(\C(t)/L_2)$ in $G_A$. Therefore the Galois closure of $L_2/T$ will be equal to the subfield of $\C(t)$ corresponding to $\Stab_{G_A}(E_1)\cap \ldots \cap \Stab_{G_A}(E_m)=K$ that is $F$ as wanted. 

As by construction $\Gal(F/K)=\Mon(h_2)\cong D_{2m}$, by Proposition \ref{dihedral} we have that the polynomial $h_2$ is linearly equivalent to the Chebychev polynomial $T_m$, and so to $f_m$ as proved in Section \ref{polynomial_decomposition_Sec}. This gives that there exist linear polynomials $\alpha_1$ and $\alpha_2$ such that
\[ \phi_A=\alpha_1\circ f_m \circ \alpha_2 \circ h_1 \circ z= \alpha_1\circ f_m \circ h_3 \circ z, \]
where $h_3=\alpha_2 \circ h_1$ is a polynomial of degree $2$. 

We want to prove that $\phi_A= f_m \circ \phi \circ A'$ where $ A'$ fits in the same Pell-Abel equation of $A$.

For the rest of the proof, as we are dealing with polynomials, we only consider branching at finite points.

 By Section \ref{Redei}, $f_m$ has two branch points, namely $0$ and $1$. As $\alpha_1$ is a linear map, $\alpha_1\circ f_m$ will be branched only at $\xi_0=\alpha_1(0)$ and $\xi_1=\alpha_1(1)$ with the same ramification indexes. As the branch points of $\alpha_1\circ f_m$ lie among the ones of $\phi_A$, this means that $\xi_0, \xi_1 \in \{0,1,b_1, \ldots, b_{k}\}$. We now consider two cases and write $\alpha_1(t)=pt+q$. We recall that in Proposition \ref{prop:ram_f_m} we gave a characterization of the ramification behavior of $f_m$ and we are going to use this for the rest of the proof.

 If $m$ is even, as 0 is the only point with no simple preimage via $\phi_{A}$, we necessarily have $\xi_0=0$ and thus $q=0$. If $m>2$ we have that the ramification above $\xi_1$ counts at least $(m-2)/2 \cdot (n/m)\geq (m/2-1)d>d-1$ while above any of $b_1,\dots, b_k$ counts at most $d-1$. This forces $\xi_1=1$ and $p=1$. If $m=2$ we have $\alpha_1\circ f_2=p (2t-1)^2=(2(\sqrt{p} t +(1-\sqrt{p})/2)-1)^2=f_2\circ (\sqrt{p} t +(1-\sqrt{p})/2)$. In any case we have or we may suppose that $\alpha_1=t$.

Assume now $m$ is odd; then, the ramification above $\xi_0$ counts at least $(m-1)/2 \cdot (n/m)\geq d>d-1$ and the same for $\xi_1$. As before, above $b_1,\dots, b_k$ it counts at most $d-1$. This implies that $\{\xi_0,\xi_1\} = \{0,1\}$. In particular, if $\xi_0=0$ and $\xi_1=1$, then $\alpha_1=t$ while if $\xi_0=1$ and $\xi_1=0$ we have $\alpha_1=1-t$. We can reduce to the first case by noticing that $(1-t) \circ f_m = f_m \circ (1-t)$.
 
% 
%  
%% $(\alpha_1\circ f_m)^{-1}(\xi_0)=\{a_1,\ldots, a_{m/2}\}$, with $e_{f_m}(a_i)=2$ for all $i=1, \ldots, m/2$. As the ramification indexes are multiplicative in towers and all the ramification indexes of points of $(\phi_A)^{-1}(\xi_0)$ are even, this forces $\xi_0=0$ and thus $q=0$. If 
% 
% 
% Assume now $m$ is odd; then, we know that $(\alpha_1\circ f_m)^{-1}(\xi_0)=f_m^{-1}(0)=\{0,d_1, \ldots, d_{\frac{m-1}{2}}\}$ with $e_{f_m}(0)=1$ and $e_{f_m}(d_j)=2$ and $(\alpha_1\circ f_m)^{-1}(\xi_1)=f_m^{-1}(1)=\{1,g_1, \ldots, g_{\frac{m-1}{2}}\}$ with $e_{f_m}(1)=1$ and $e_{f_m}(g_j)=2$. 
%Since $\deg(h_3 \circ z)=\frac{2n}{m}$, we have that $\xi_0$ and $\xi_1$ have at most $\frac{2n}{m}\leq n$ simple preimages via $\phi_A$ each, while each $b_j$ must have at least $2n-(2d-2)\geq 2n-2\frac{n}{m}+2 \geq n+2$ simple preimages. This implies that $\{\xi_0,\xi_1\} = \{0,1\}$. In particular, if $\xi_0=0$ and $\xi_1=1$, then $\alpha_1=t$ while if $\xi_0=1$ and $\xi_1=0$ we have $\alpha_1=1-t$. 

%First, notice that the case $\alpha_1=1-t$ can occur only if $m$ is odd. In this case, using the property that $(1-t) \circ f_m = f_m \circ (1-t)$, we reduce to the case $\alpha_1=t$, that is
Finally, we have
\begin{equation*} 
 \phi_A= f_m \circ h_3 \circ z, 
\end{equation*}
where $h_3$ is a polynomial of degree $2$.

We write $w=h_3 \circ z$ and let $r$ be the unique finite branch point of $h_3$. We claim that $r\in \{0,1\}$. Suppose not; the fact that $f_m(\{0,1\})\subseteq \{0,1\}$ and that $\phi_A$ fits in a Pell-Abel equation implies that $w^{-1}(\{0,1\}) $ contains exactly $2d$ simple preimages and thus the sum of the ramification above 0 and 1 of $w$ counts at least $2(n/m)-d$. On the other hand, the ramification of $w$ above $r$ counts at least $n/m$ making the whole ramification to count at least $3(n/m)-d\geq 2(n/m)=\deg w$. This contradicts the Riemann-Hurwitz formula and we have $r\in \{0,1\}$.

If $m$ is odd, then we must have $r=0$ because $1$, being the only simple preimage of 1 via $f_m$ must have simple preimages via $w$.

If $m$ is even and $r=1$ we observe that $f_m=f_m\circ (1-t) $. We then can simply replace $h_3$ by $(1-t)\circ h_3$ and thus we may assume $r=0$.

In any case we have $h_3=st^2=(\sqrt{s}t)^2$ and then, possibly composing with a suitable linear polynomial on the right, we can assume that $h_3$ is exactly the square function $\phi$, so 
$$ \phi_A= f_m \circ \phi \circ A'. $$ 
We are left with proving that $A'$ is another solution of the same Pell-Abel equation, i.e. that $\phi_{A'}-1=D B'^2$ for some polynomial $B'$. 
Notice that, since $A$ is a solution of the Pell-Abel equation, $\phi_A^{-1}(1)$ contains exactly $2d$ simple points $\{r_1, \ldots, r_{2d}\}$ (the zeros of $D$); as $1$ (and $0$ if $m$ is even) is the only point in $f_m^{-1}(1)$ with ramification index $1$ and $\phi_{A'}=\phi \circ A'$ is a square, then $\{r_1, \ldots r_{2d}\} \subset \phi_{A'}^{-1}(1)$. This shows that $(\phi_{A'}-1)/D={B'}^2$ for some polynomial $B'$ as wanted.
\end{proof}

\begin{corollary}  \label{cor:behav_perm}
Let $m$ be a positive integer.
	Let $(A,B)$ be a solution of degree $n$ of the Pell-Abel equation $A^2-DB^2=1$ and, for $i=1, \ldots, 2m$, let $F_i=\{j\in \{1, \ldots, 2n\}\ |\ j\equiv i \mod 2m\}$ and, after conjugating, assume that $\sigma_{\infty}=(2n, 2n-1, \ldots, 1)$ and that one of the  points which are fixed by $\sigma_1$ and by all the $\tau_i$ is $2n$. Then, $(A,B)$ is the $m^{th}$-power of a solution $(A', B')$ of the same Pell-Abel equation if and only if the permutations $\sigma_{\infty}, \sigma_0, \sigma_1, \tau_1, \ldots, \tau_{k}$ associated to $A$ satisfy the following conditions:
	\begin{equation} \label{behaviour_perm}
	\begin{cases}
	\sigma_{\infty}(F_1)=F_{2m} \mbox{ and }  \sigma_{\infty}(F_h)= F_{h-1} \quad \forall{h=2,\ldots, 2m};\\
	\sigma_1(F_{2m})=F_{2m} \mbox{ and }  \sigma_1(F_h)=F_{2m-h}\quad \forall h=1, \ldots, 2m-1; \\
	\sigma_0(F_i)=F_{2m-i+1} \quad \forall i=1,\ldots, 2m; \\
	\tau_j(F_i)=F_i \quad \forall i=1, \ldots, 2m \mbox{ and } j=1,\ldots, k.
	\end{cases} 
	\end{equation}
\end{corollary}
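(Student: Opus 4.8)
The statement is, up to the hypotheses discussed below, a direct repackaging of Theorem \ref{characterization} together with the explicit permutation action computed in Lemma \ref{lemma:structure_G}. The plan is to prove the two implications separately, reducing each to these two earlier results; the only genuine content beyond bookkeeping is to check that the hypotheses of Theorem \ref{characterization} (namely that $m \mid n$ and $n/m \ge d$) are automatically met in each direction.

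For the forward implication I would start from $(A,B)$ being the $m$-th power of a solution $(A',B')$. As recalled at the beginning of the proof of Theorem \ref{characterization}, this gives $\phi_A = f_m \circ \phi \circ A'$ with $\deg f_m = m$, so that $\deg A = m \deg A'$ (hence $m \mid n$) and $G_A$ is $2m$-imprimitive, i.e. it preserves some $2m$-partition of $\{1,\dots,2n\}$. Since $\sigma_\infty = (2n,\dots,1) \in G_A$ is a single $2n$-cycle, the unique $2m$-partition it can preserve is $\mathcal{F}_{2m}$ (a $2n$-cycle admits exactly one block system with $2m$ blocks, as already used in the proof of Corollary \ref{Thm.prim}); therefore $G_A$ preserves exactly $\mathcal{F}_{2m}$. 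I would then invoke Lemma \ref{lemma:structure_G} verbatim, since its conclusion is precisely the list of equalities \eqref{behaviour_perm}.

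For the converse I would observe that conditions \eqref{behaviour_perm} say that each generator $\sigma_0,\sigma_\infty,\sigma_1,\tau_1,\dots,\tau_k$ sends every part $F_i$ to a part of $\mathcal{F}_{2m}$; since these generate $G_A$, the group $G_A$ preserves $\mathcal{F}_{2m}$ and is thus $2m$-imprimitive. Comparing the cardinalities of $F_h$ and $F_{h-1}$ through the first line of \eqref{behaviour_perm} forces all $|F_i|$ to be equal, hence $m \mid n$ and $|F_i| = n/m$. To be allowed to apply Theorem \ref{characterization} I still need $n/m \ge d$: the second line of \eqref{behaviour_perm} shows that the only parts fixed by $\sigma_1$ are $F_m$ and $F_{2m}$, so the $2d$ indices fixed by $\sigma_1$ all lie in $E_m = F_m \cup F_{2m}$, whence $2d \le |E_m| = 2n/m$ and $n/m \ge d$. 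With both $m \mid n$ and $n/m \ge d$ in hand, Theorem \ref{characterization} yields that $(A,B)$ is the $m$-th power of another solution $(A',B')$ of the same Pell-Abel equation.

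The step I expect to require the most care is this last one: making sure the inequality $n/m \ge d$ really is forced by \eqref{behaviour_perm}, since without it Theorem \ref{characterization} does not apply. Everything else is a translation between ``$G_A$ preserves $\mathcal{F}_{2m}$'' and the explicit action recorded in Lemma \ref{lemma:structure_G}, together with the elementary remark about the unique block system of a $2n$-cycle with a prescribed number of blocks.
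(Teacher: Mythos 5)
Your proposal is correct and follows essentially the route the paper intends: the corollary is stated there without proof as a direct consequence of Theorem \ref{characterization} and Lemma \ref{lemma:structure_G}, using exactly the observation that the $2n$-cycle $\sigma_{\infty}$ admits $\mathcal{F}_{2m}$ as its unique $2m$-block system, and your verification that \eqref{behaviour_perm} forces $m\mid n$ and $n/m\ge d$ is the right (and only nontrivial) bookkeeping. The sole point worth adding is that in the forward direction you also need $n/m\ge d$ before invoking Lemma \ref{lemma:structure_G} (its proof uses $|F_h|=n/m\ge d$ to show the $\tau_i$ fix each block), but this is immediate since $A'$ is a non-trivial solution and hence $\deg A'=n/m\ge d$.
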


\begin{ese}
Let us see an example with $n=6$ and $d=2$, so the permutations are in $S_{12}$. In this case, we have that the solution of the Pell-Abel equation can be either primitive or a square of a solution of degree 3 or the cube of a solution of degree 2. 

Consider $\sigma_{\infty}=(12, 11, \ldots, 1)$ and $\sigma_0=(1, 12)(2,11)(3,10)(4,9)(5,8) (6,7)$. As $\sigma_0\sigma_{\infty}\sigma_1\tau_1=1$, we have that $\sigma_1\tau_1=(1,11)(2,10)(3,9)(4,8)(5,7)$. So the solution will depend on the choice of $\tau_1$ among these transpositions. In particular, if we choose $\tau_1=(1,11)$ or $\tau_1=(5,7)$, the associated solution is primitive. In fact, by \eqref{behaviour_perm}, we have that the solution associated to these permutations is neither a square nor a cube (otherwise, $1, 11$ or $5,7$ would lie in the same congruence class modulo $4$ in the case of a square or modulo $6$ in the case of a cube). Moreover, if $\tau_1=(2,10)$ or $\tau_1=(4,8)$, the solutions are squares. In fact, in this case it is easy to see that $(\sigma_{\infty},\sigma_0,\sigma_1,\tau_1)$ satisfies \eqref{behaviour_perm} for $m=2$, while, if $\tau_1=(3,9)$, the solution is a cube. 
Note however that the two 4-tuples with $\tau_1=(2,10)$ and $\tau_1=(4,8)$ are actually conjugated by $\sigma_{\infty}^6$. 
\end{ese}

\section{Counting the conjugacy classes of permutations} \label{sec:d=2}

In this section, we want to show how to apply Theorem \ref{thm_miranda} and use combinatorial arguments to count equivalence classes of Pellian polynomials with a solution of degree $n$.

As discussed in the introduction, the advantage of the combinatorial argument is that it allows to compute the precise number instead of only an asymptotic formula, and this can be done algorithmically given $n$ and $d$. On the other hand, the combinatorics becomes more difficult as soon as $d$ grows, due to the number of different configurations that the permutations can have. For this reason, in this section we are going to stick to the case $d=2$ and to maximal branching, that is when the map $\phi_A$ is branched over $0$, $1$, $\infty$ and over another point which we will denote by $b$. In this case, using the arguments of Section \ref{ABm} and the Riemann-Hurwitz formula, if we count the branching of $\phi_A$ as the sum of $e-1$ over the ramification indices $e$, we have that, above $0$ the branching is exactly $n$, above $1$ is $n-1$, above $\infty$ it is $2n-1$ and above $b$ is $1$. This implies that the cycle decomposition of $\sigma_0, \sigma_{\infty}, \sigma_1$ and $\tau_1=:\tau$ is fixed, and we have that:
\begin{enumerate}
\item[1 -] $\sigma_0$ is the product of $n$ disjoint transpositions;
\item[2 -] $\sigma_{\infty}$ is a $2n$-cycle;
\item[3 -] $\sigma_1$ is the product of $n-1$ disjoint transpositions;
\item[4 -] $\tau$ is a transposition.
\end{enumerate}
Moreover, we know that $\sigma_0, \sigma_{\infty}, \sigma_1$ and $\tau$ satisfy $\sigma_0 \sigma_{\infty}\sigma_1\tau=$id.

As said before each conjugacy class of $4$-tuples $(\sigma_0,\sigma_{\infty}, \sigma_1, \tau)$ has at least one representative with $\sigma_{\infty}=(2n,2n-1, \ldots, 1)$ since $\sigma_{\infty}$ is a $2n$-cycle and one can relabel the element using a suitable conjugation. Moreover, there are at least two indexes that are fixed by $\sigma_1$ and by $\tau$; without loss of generality we will always assume that one of the two indexes is $2n$. We will call such a representative \textit{special}.

Now, since $\sigma_1$ is the product of $n-1$ disjoint transpositions and $\tau$ is a transposition, we have three different cases:
\begin{itemize}
\item $\sigma_1$ and $\tau$ are disjoint permutations;
\item $\sigma_1\tau$ is the product of $n-2$ transpositions and a $3$-cycle;
\item $\sigma_1\tau$ is the product of $n-3$ transpositions and a $4$-cycle.
\end{itemize}
In what follows, we are going to study the special $4$-tuples, analysing in particular how many special $4$-tuples we have in each conjugacy class. As the conjugation preserves the cycle decomposition, we
have to analyse these three cases separately.

\subsection{The disjoint case} \label{disjoint}

Let $\Sigma$ be a special $4$-tuple, i.e. $\sigma_{\infty}=(2n, 2n-1, \ldots, 1)$ and $\sigma_1$ and $\tau$ both fix $2n$. Suppose moreover that $\sigma_1$ and $\tau$ are disjoint permutations.

Since $2n$ is fixed by both $\sigma_1$ and $\tau$ and $\sigma_0\sigma_{\infty}\sigma_1\tau=$id, then $2n$ is fixed by $\sigma_0\sigma_{\infty}$; moreover, as $\sigma_0\sigma_{\infty}\sigma_1\tau=$id, we must have that $\sigma_0(2n)=1$. Since $\sigma_0$ is composed only by transpositions, we have also that $\sigma_0(1)=2n$, hence $\sigma_1\tau(2n-1)=1$. Now as $\sigma_1\tau$ is composed only by transpositions, we have that  $\sigma_1\tau(1)=2n-1$, so $\sigma_0(2n-1)=2$ and so on.
Going on with this argument, we have that $\sigma_0$ must have the form $(1,2n)(2,2n-1) \cdots (n,n+1)$ and $\sigma_0\sigma_{\infty}$ fixes also $n$. In this case, $\sigma_1 \tau=(1,2n-1)(2,2n-2) \cdots (n-1, n+1)$ and we may choose as $\tau$ any of the $n-1$ transpositions $(1,2n-1),(2,2n-2), \cdots, (n-1, n+1)$. 
\medskip

Now, assume that $\Sigma'=(\sigma_0',\sigma_{\infty},\sigma_1',\tau')$ is another special $4$-tuple lying in the same conjugacy class of $\Sigma$, i.e. $\Sigma'=\gamma^{-1}\Sigma \gamma$ for some $\gamma \in S_{2n}$; then, since by assumption $\sigma_{\infty}$ has to be fixed by the conjugation, we have that $\gamma$ will be a power of $\sigma_{\infty}$. Furthermore, since we want $\sigma_1'$ and $\tau'$ both  to fix $2n$, then the conjugation has to permute $2n$ with $n$ (which is the only other point fixed by $\sigma_1$ and $\tau$); consequently, $\gamma=\sigma_{\infty}^n$. Notice finally that, given a special $4$-tuple $\Sigma$, we have $\sigma_{\infty}^{-n}\Sigma \sigma_{\infty}^n= \Sigma$ if and only if $n$ is even and $\tau=\left ( \frac{n}{2}, 2n-\frac{n}{2} \right )$. 

This implies that:
\begin{itemize}
   \item if $n$ is odd, then we have $\frac{1}{2}(n-1)$ conjugacy classes, because every conjugacy class contains two special $4$-tuples;
   \item if $n$ is even, then we have $\frac{1}{2}n$ conjugacy classes, because every conjugacy class contains exactly two special $4$-tuples, except for the conjugacy class of the $4$-tuple with $\tau=\left (\frac{n}{2}, 2n-\frac{n}{2}\right )$ which contains only one special $4$-tuple.
\end{itemize}
In general we have
\begin{equation} \label{number_conj_classes}
\#\ \textrm{conjugacy classes}=
\left \lfloor \frac{n}{2} \right \rfloor.
\end{equation}

We point out that, in the disjoint case, Corollary \ref{cor:behav_perm} gives an easier way to detect whether the solution $(A,B)$ is primitive by looking only at the corresponding $\tau$ of a special representative.

\begin{proposition}
Consider the $4$-tuple $\Sigma=(\sigma_0,\sigma_{\infty},\sigma_1,\tau)$ with $\sigma_{\infty}=(2n,2n-1,\ldots,1)$ and $\sigma_0=(1,2n)(2,2n-1)\ldots (n,n+1)$ associated to a solution $(A,B)$ of a Pell-Abel equation. Then, $(A,B)$ is the $m$-th power of another solution if and only if $\tau=(h,2n-h)$ with $m\mid h$.

In particular, there are exactly $\varphi(n)/2$ equivalence classes corresponding to primitive solutions.
\end{proposition}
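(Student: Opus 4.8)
The plan is to read off both assertions from the combinatorial criterion of Theorem \ref{characterization} and Corollary \ref{cor:behav_perm}, reducing everything to a single divisibility condition on the index $h$. Recall that, by Theorem \ref{characterization}, for $m \mid n$ with $n/m \ge d = 2$ the pair $(A,B)$ is an $m$-th power exactly when $G_A$ is $2m$-imprimitive; and since $\sigma_{\infty} \in G_A$ is a $2n$-cycle, the only $2m$-partition it can possibly preserve is $\mathcal{F}_{2m} = \{F_1, \ldots, F_{2m}\}$ with $F_i = \{j : j \equiv i \bmod 2m\}$. Hence $(A,B)$ is an $m$-th power if and only if $G_A$ preserves this specific partition, and it suffices to test each of the generators $\sigma_0, \sigma_{\infty}, \sigma_1, \tau$ on it.

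First I would dispose of the three ``fixed'' generators. Since $m \mid n$ we have $2m \mid 2n$, so $\sigma_{\infty}\colon j \mapsto j-1$ permutes the residue classes modulo $2m$ and preserves $\mathcal{F}_{2m}$; likewise $\sigma_0\colon j \mapsto 2n+1-j$ sends the class of $i$ to the class of $1-i$ and so preserves $\mathcal{F}_{2m}$. Using the relation $\sigma_0\sigma_{\infty}\sigma_1\tau = \id$, which gives $\sigma_1 = \sigma_{\infty}^{-1}\sigma_0\tau$ as an identity in $S_{2n}$, and using that the permutations preserving a fixed partition form a subgroup, the permutation $\sigma_1$ automatically preserves $\mathcal{F}_{2m}$ as soon as $\tau$ does. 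Therefore $G_A$ preserves $\mathcal{F}_{2m}$ if and only if the single transposition $\tau = (h, 2n-h)$ does. A transposition preserves $\mathcal{F}_{2m}$ exactly when its two moved points lie in the same block (the blocks have size $n/m \ge 2$), i.e. when $h \equiv 2n-h \bmod 2m$; as $2n \equiv 0 \bmod 2m$ this reads $2h \equiv 0 \bmod 2m$, that is $m \mid h$. This proves the first claim.

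For the count of primitive classes I would first translate primitivity: $(A,B)$ is primitive if and only if it is not an $m$-th power for any $m \ge 2$. If $g := \gcd(n,h) \ge 2$, then $g$ is a proper divisor of $n$ (because $h < n$), so $g \le n/2$ and $n/g \ge 2 = d$; the equivalence just proved then applies with $m = g$ and exhibits $(A,B)$ as a $g$-th power. Conversely, if $\gcd(n,h)=1$ no $m \ge 2$ divides $h$, so $(A,B)$ is primitive. Thus the primitive special $4$-tuples in the disjoint case are exactly those with $\gcd(n,h)=1$, of which there are $\varphi(n)$ for $h \in \{1,\dots,n-1\}$. Passing to equivalence classes, I would invoke the identification $h \leftrightarrow n-h$ coming from conjugation by $\sigma_{\infty}^{n}$ established above: since $\gcd(n,h)=\gcd(n,n-h)$, primitivity is invariant under this involution, and for $n \ge 3$ a primitive $h$ is never self-paired (self-pairing forces $h = n/2$, whence $\gcd(n,h) = n/2 > 1$). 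Hence the $\varphi(n)$ primitive values of $h$ group into $\varphi(n)/2$ conjugacy classes.

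The argument is essentially bookkeeping rather than a hard estimate, and the two points that need care are exactly the two ``automatic'' steps: confirming that the conditions on $\sigma_0, \sigma_1, \sigma_{\infty}$ in \eqref{behaviour_perm} really do follow for free (so the whole criterion collapses to $m \mid h$), and verifying that the conjugation $h \leftrightarrow n-h$ pairs the primitive tuples without fixed points, so that $\varphi(n)/2$ is an integer. The only genuine exception is $n = 2$, where $\varphi(2)=1$ is odd and the single disjoint tuple is self-paired; this degenerate case should be excluded or stated separately.
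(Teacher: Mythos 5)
Your proof is correct and follows essentially the same route as the paper's: it invokes Corollary \ref{cor:behav_perm} (equivalently Theorem \ref{characterization}), notes that the conditions on $\sigma_0,\sigma_{\infty}$ hold automatically and that the condition on $\sigma_1$ follows from the one on $\tau$ via the relation $\sigma_0\sigma_{\infty}\sigma_1\tau=\id$, and reduces everything to $h\equiv -h \pmod{2m}$, i.e.\ $m\mid h$, with the count obtained from $\gcd(n,h)=1$ and the pairing $h\leftrightarrow n-h$ under conjugation by $\sigma_{\infty}^{n}$. Your observation that $n=2$ is a degenerate exception to the $\varphi(n)/2$ count is a legitimate point of care that the paper's own (terser) proof does not address.
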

\begin{proof}
By Corollary \ref{cor:behav_perm}, $\Sigma$ corresponds to an $m$-th power if and only if \eqref{behaviour_perm} holds. The conditions on $\sigma_0$, $\sigma_\infty$ are always satisfied. The condition on $\tau$ is satisfied if and only if $\tau=(h,2n-h)$ with $h \equiv 2n-h\equiv -h \pmod {2m}$, i.e., $m\mid h$. Finally, if $m\mid h$, also $\sigma_1$ satisfies the conditions of \eqref{behaviour_perm}, proving the claim.
\end{proof}

\subsection{The 3-cycle case} \label{3-cycle}

Let $\Sigma=(\sigma_0,\sigma_{\infty},\sigma_1,\tau)$ be a special $4$-tuple, i.e. with $\sigma_{\infty}=(2n, \ldots, 1)$ and such that $\sigma_1$ and $\tau$
fix both $2n$, and assume that $\sigma_1\tau$ decomposes as a product of $n-3$ transpositions and a 3-cycle.

First, we are going to prove the following result that describes the possible shapes of $\sigma_0$ and $\sigma_1\tau$ of such a $4$-tuple.

\begin{proposition}
Let $\Sigma=(\sigma_0,\sigma_{\infty}, \sigma_1, \tau)$ be a special $4$-tuple such that $\sigma_1\tau$ contains a $3$-cycle. Then, there exist $h,k$ with $1\le h\leq n-2$, $h< k <2n-h$ and $h\equiv k \pmod 2$ such that 
\begin{align} \label{shape_3_cycle}
\sigma_0 &=\prod_{i=1}^h(i,2n+1-i) \prod_{j=1}^{\frac{k-h}{2}}(h+j, k+1-j) \prod_{t=1}^{\frac{2n-h-k}{2}}(k+t, 2n-h+1-t);\\
\sigma_1\tau & =\prod_{i=1}^{h-1}(i,2n-i)\prod_{j=1}^{\frac{k-h}{2}-1} (h+j, k-j) \prod_{t=1}^{\frac{2n-h-k}{2}}(k+t,2n-h-t)\ (2n-h, h, k). \nonumber
\end{align}
In particular, $\sigma_1\tau$ fixes 3 indexes: $2n, \frac{k+h}{2}$ and $\frac{2n-h+k}{2}.$
Moreover, there are three different $\Sigma$ satisfying \eqref{shape_3_cycle} corresponding to different choices of $\tau$, i.e. $\tau\in \{(h,k), (h, 2n-h), (k, 2n-h)\}$.
\end{proposition}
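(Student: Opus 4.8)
The plan is to re-run the ``chain propagation'' from the disjoint case of Section~\ref{disjoint}, now carrying along the single extra $3$-cycle. The engine is the relation $\sigma_1\tau=\sigma_\infty^{-1}\sigma_0$ obtained from $\sigma_0\sigma_\infty\sigma_1\tau=\mathrm{id}$; since $\sigma_\infty=(2n,\dots,1)$ this is the recurrence $(\sigma_1\tau)(i)=\sigma_0(i+1)$, indices read modulo $2n$. Two facts steer everything: $\sigma_0$ is a fixed-point-free involution (a perfect matching of $\{1,\dots,2n\}$), and, in this case, $\sigma_1\tau$ has cycle type ``$(n-3)$ transpositions plus one $3$-cycle'', hence exactly three fixed points. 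Crucially, $x$ is fixed by $\sigma_1\tau$ if and only if $\sigma_0(x)=x+1$, i.e. exactly when $\sigma_0$ contains the adjacent transposition $(x,x+1)$; so $\sigma_0$ has precisely three such ``short chords'', one of them being $(1,2n)$ because $2n$ is fixed.

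First I would start the chain at $2n$: from $(\sigma_1\tau)(2n)=2n$ the recurrence gives $\sigma_0(1)=2n$. Then I iterate, using that $\sigma_0(i)=2n+1-i$ together with the involution property yields $(\sigma_1\tau)(2n-i)=i$; as long as $2n-i$ lies off the $3$-cycle, $\sigma_1\tau$ acts there as a transposition, forcing $(\sigma_1\tau)(i)=2n-i$ and hence $\sigma_0(i+1)=2n+1-(i+1)$. Let $h$ be the smallest index where this fails. This produces the outer ``rainbow'' $\prod_{i=1}^{h}(i,2n+1-i)$ inside $\sigma_0$ and $\prod_{i=1}^{h-1}(i,2n-i)$ inside $\sigma_1\tau$; at $i=h$ one still has $(\sigma_1\tau)(2n-h)=h$ but $(\sigma_1\tau)(h)=:k\neq 2n-h$, so $2n-h$, $h$, $k$ are forced onto the unique $3$-cycle, which is therefore $(2n-h,h,k)$.

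Next I would propagate inward from the two new ends of the $3$-cycle. From $(\sigma_1\tau)(h)=k$ I read off the chord $(h+1,k)$ in $\sigma_0$, and the same alternation gives $\sigma_0\ni(h+j,k+1-j)$ and $\sigma_1\tau\ni(h+j,k-j)$ until the two endpoints collide; this occurs at $j=(k-h)/2$, where an adjacent chord appears and creates the fixed point $(h+k)/2$. Symmetrically, from $(\sigma_1\tau)(k)=2n-h$ I get $\sigma_0\ni(k+t,2n-h+1-t)$ and the fixed point $(2n-h+k)/2$. Requiring these meeting points to be integers is exactly the parity condition $h\equiv k\pmod 2$. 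The four intervals $[1,h]$, $[h+1,k]$, $[k+1,2n-h]$, $[2n-h+1,2n]$ coming from the three propagations partition $\{1,\dots,2n\}$; this both assembles the asserted formulas for $\sigma_0$ and $\sigma_1\tau$ and forces $h<k<2n-h$, while nonemptiness of the two inner arcs (each must contain its fixed point) gives $k\ge h+2$ and $k\le 2n-h-2$, hence $1\le h\le n-2$.

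For the last assertion, note that $(h,k)$ now determines $\sigma_0$, $\sigma_\infty$ and $P:=\sigma_1\tau$, so the only remaining freedom is the factorization $P=\sigma_1\tau$ with $\tau$ a transposition and $\sigma_1$ an involution fixing exactly $2d=4$ indices. Off the $3$-cycle $P$ is already an involution, so $\tau$ must be supported inside $\{2n-h,h,k\}$; a $3$-cycle factors as (involution)$\cdot$(transposition) in precisely three ways, giving $\tau\in\{(h,k),(h,2n-h),(k,2n-h)\}$, and for each choice $\sigma_1=P\tau$ fixes one of $\{h,k,2n-h\}$ beyond the three global fixed points, for a total of four, as required. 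This yields three distinct special $4$-tuples. The step I expect to be the main obstacle is making the propagation fully rigorous: one must rule out the chain terminating prematurely at an unexpected fixed point or re-entering the $3$-cycle at the wrong place, so that the three arcs genuinely tile $\{1,\dots,2n\}$ in the stated order. The cleanest control is precisely the dictionary ``fixed point of $\sigma_1\tau$ $\leftrightarrow$ adjacent chord of $\sigma_0$'', combined with the fact that only three such chords exist.
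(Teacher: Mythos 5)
Your proposal is correct and follows essentially the same route as the paper's proof: both propagate the relation $\sigma_0\sigma_\infty\sigma_1\tau=\mathrm{id}$ as a chain starting from the fixed index $2n$, detect where the $3$-cycle breaks the ``rainbow'' pattern to locate $h$ and $k$, run the two inner propagations to the fixed points $\frac{h+k}{2}$ and $\frac{2n-h+k}{2}$ (whence the parity condition), and enumerate the three factorizations of the $3$-cycle as $\sigma_1\tau$. Your explicit recurrence $(\sigma_1\tau)(i)=\sigma_0(i+1)$ and the dictionary ``fixed point of $\sigma_1\tau$ $\leftrightarrow$ adjacent chord of $\sigma_0$'' are just a cleaner bookkeeping of the same argument.
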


\begin{proof}
Let $\Sigma=(\sigma_0,\sigma_{\infty},\sigma_1,\tau)$ be a special $4$-tuple and assume that $\sigma_1\tau$ is a product of $n-3$ disjoint transpositions and a disjoint 3-cycle. In this case, $\sigma_1\tau$ fixes $3$ indexes, and one of them is $2n$. Then, since $\sigma_0\sigma_{\infty}\sigma_1\tau=\mathrm{id}$, we must have that $\sigma_0(2n)=1$. As $\sigma_0$ is the product of $n$ disjoint transpositions, we have also that $\sigma_0(1)=2n$, hence $\sigma_1\tau(2n-1)=1$. Now, we have two choices: either $\sigma_1\tau(1)=2n-1$ (so $(2n-1,1)$ is one of the disjoint transpositions in the decomposition of $\sigma_1\tau$), or $\sigma_1\tau(1)=k\neq 2n-1$ (giving rise to the 3-cycle in the product). We can go on as in the disjoint case described in the previous section until we reach the situation in which there exist $1\le h\le n-2$ and $h+1\le k\le 2n-h-1$ such that $\sigma_0$ contains the disjoint transpositions $(1, 2n),(2,2n-1)\cdots (h,2n-h+1)$ and $\sigma_1\tau$ contains the disjoint transpositions $(1,2n-1)\ldots, (2n-h+1,h-1)$ and the 3-cycle $(2n-h,h,k)$. 

Notice that $k\neq h+1, 2n-h-1$, otherwise $\sigma_0\sigma_{\infty}\sigma_1\tau=\id$ would imply that $\sigma_0$ fixes $h+1$ (or in the second case $2n-h-1$), which is impossible because $\sigma_0$ has no fixed points. So, $h+1 <k< 2n-h-1$. Now, if $\sigma_1\tau(h)=k$, as $\sigma_0\sigma_{\infty}\sigma_1\tau=\id$, we have $\sigma_0(k)=h+1$ and so $\sigma_0(h+1)=k$. This means that $\sigma_1\tau(h+1)=k-1$ and $\sigma_1\tau(k-1)=h+1$ and so on. This process ends when one reaches the conditions $\sigma_0\left ( \frac{k+h}{2}\right)=\frac{k+h}{2}+1$ and viceversa, $\sigma_1\tau\left (\frac{k+h}{2}-1\right )=\frac{k+h}{2}+1$ and viceversa and $\frac{k+h}{2}$ is fixed by $\sigma_1\tau$. Notice that this gives the additional condition that $h\equiv k \pmod 2$. 

On the other hand, if $\sigma_1\tau$ contains the 3-cycle $(2n-h,h,k)$, we also have that $\sigma_1\tau(k)=2n-h$, giving other conditions to satisfy. In fact,  $\sigma_0\sigma_{\infty}\sigma_1\tau=\id$ implies $\sigma_0(2n-h)=k+1$ and $\sigma_0(k+1)=2n-h$. This means, as before, that $\sigma_1\tau(2n-h-1)=k+1$ and viceversa, and so on. This process ends when one reaches the conditions $\sigma_0\left ( \frac{2n-h+k}{2}\right)=\frac{2n-h+k}{2}+1$ and viceversa, $\sigma_1 \tau\left ( \frac{2n-h+k}{2}-1\right)=\frac{2n-h+k}{2}+1$ and $\frac{2n-h+k}{2}$ is fixed by $\sigma_1\tau$, proving the first part of the assertion.

Finally, let us denote by 
\[
\alpha_{hk}:= \sigma_1\tau (2n-h, h, k)^{-1},
\]
i.e. the product of the $n-2$ disjoint transpositions that appear in the decomposition of $\sigma_1\tau$.

Notice that, if $\sigma_1\tau$ contains the $3$-cycle $(2n-h, h, k)$, then we have three choices for $\sigma_1$ and $\tau$, namely $\sigma_1=\alpha_{hk} (2n-h, h)$ and $\tau=(2n-h,k)$ or $\sigma_1=\alpha_{hk} (h,k)$ and $\tau=(h, 2n-h)$ or $\sigma_1=\alpha_{hk}(k, 2n-h)$ and $\tau=(k,h)$, giving three different special $4$-tuples as wanted.
\end{proof}

We now want to count how many special $4$-tuples are contained in each conjugacy class. First of all, notice that if we have two special $4$-tuples $\Sigma=(\sigma_0,\sigma_{\infty},\sigma_1,\tau)$ and $\Sigma'=(\sigma_0',\sigma_{\infty}',\sigma_1',\tau')$ with $\sigma_{\infty}=\sigma_{\infty}'=(2n, 2n-1, \ldots, 1)$ and such that $\Sigma'=\gamma^{-1} \Sigma \gamma$ for some $\gamma\in S_{2n}$, then $\gamma$ must be equal to a power of $\sigma_{\infty}$.
We will consider this conjugation from a geometric point of view; namely, if we consider the regular $2n$-gon with vertices labelled by $1,2,\ldots, 2n$, then conjugating by a permutation of the form $\sigma_{\infty}^\ell$ with $\ell\in\{1,\ldots,2n\}$ corresponds to a rotation of angle $\frac{\pi}{n} \ell$.

Let us denote by $p:=\frac{k+h}{2}$ and $q:=\frac{2n-h+k}{2}$ the two points different from $2n$ fixed by $\sigma_1\tau$. Then, if we conjugate the set $\{2n, q, p\}$ by a power of $\sigma_{\infty}$, it corresponds to a rotation of the triangle of vertices $\{2n, q, p\}$. As we want $\Sigma'$ to be special, then $\sigma_1'\tau$ has to fix $2n$, hence we are interested in the rotations which send one of the vertices to $2n$. This means that we can conjugate only by $\sigma_{\infty}^{2n-p}$ or by $\sigma_{\infty}^{2n-q}$. In the first case, the set of fixed points $\{2n, p, q\}$ is sent to $\{2n, 2n-p, q-p\}$, while, in the second case, it is sent to $\{2n, 2n-(q-p), 2n-q\}$. 

%For every transposition $(a,b)$ with $1 \le a,b \le 2n$, we will call \textit{size} of the transposition 
%\[
%s_{(a,b)}=\min\left \{ \{a-b \pmod {2n}\}, \ \{b-a \pmod {2n}\} \right \}.
%\] 
%For example, the size of $(1, 2n)$ is $1$. If we consider $(a,b)$ as a segment on the $2n$-gon with vertices labeled clockwise by $1,2,\ldots, 2n$, the size of $(a,b)$ is nothing else than the smallest distance between the vertex $a$ and the vertex $b$. 

We have the following result:

\begin{proposition}
Let $\Sigma$ be a special $4$-tuple $(\sigma_{\infty}, \sigma_0,\sigma_1,\tau)$ such that $\sigma_1\tau$
contains the $3$-cycle $(2n-h,h,k)$. Then, the conjugacy class of $\Sigma$ contains exactly three special $4$-tuples.
\end{proposition}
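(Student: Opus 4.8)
The plan is to show that every special $4$-tuple in the conjugacy class of $\Sigma$ is obtained from $\Sigma$ by conjugating with a power of $\sigma_\infty$, to determine exactly which powers yield a special $4$-tuple, and finally to check that the resulting tuples are pairwise distinct.

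First I would reduce the conjugation to powers of $\sigma_\infty$. If $\Sigma'=\gamma^{-1}\Sigma\gamma$ is another special $4$-tuple, then both have the same entry $\sigma_\infty=(2n,2n-1,\ldots,1)$, so $\gamma$ commutes with $\sigma_\infty$; since the centralizer of a $2n$-cycle in $S_{2n}$ is the cyclic group it generates, we get $\gamma=\sigma_\infty^{\ell}$ for some $\ell$. Writing $\sigma_1'=\sigma_\infty^{-\ell}\sigma_1\sigma_\infty^{\ell}$ and $\tau'=\sigma_\infty^{-\ell}\tau\sigma_\infty^{\ell}$, the tuple $\Sigma'$ is special precisely when both $\sigma_1'$ and $\tau'$ fix $2n$, which is equivalent to $\sigma_\infty^{\ell}(2n)=2n-\ell$ being a common fixed point of $\sigma_1$ and $\tau$.

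Next I would compute the common fixed points of $\sigma_1$ and $\tau$. By the previous proposition $\sigma_1\tau$ equals $\alpha_{hk}$ times the $3$-cycle $(2n-h,h,k)$ and fixes exactly the three indices $2n$, $p=\frac{k+h}{2}$ and $q=\frac{2n-h+k}{2}$; moreover $\sigma_1=\alpha_{hk}\rho$ and $\tau$ are two transpositions of points of $\{2n-h,h,k\}$ whose product is that $3$-cycle. Hence $\sigma_1$ fixes the $2d=4$ indices $\{2n,p,q\}$ together with the unique element of $\{2n-h,h,k\}$ not moved by $\rho$; since $\rho$ and $\tau$ share exactly one point and together cover $\{2n-h,h,k\}$, that extra element is moved by $\tau$. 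Therefore the common fixed points of $\sigma_1$ and $\tau$ are exactly $\{2n,p,q\}$, and by the previous step a conjugate $\sigma_\infty^{-\ell}\Sigma\sigma_\infty^{\ell}$ is special if and only if $\ell\in\{0,\,2n-p,\,2n-q\}\pmod{2n}$. This already gives at most three special $4$-tuples in the conjugacy class.

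The main obstacle is to prove that these three are genuinely distinct, i.e. that no nontrivial power of $\sigma_\infty$ stabilizes $\Sigma$ under conjugation. Suppose $\sigma_\infty^{t}$ with $t\not\equiv 0\pmod{2n}$ satisfies $\sigma_\infty^{-t}\Sigma\sigma_\infty^{t}=\Sigma$. Then $\sigma_\infty^{t}$ commutes with the transposition $\tau$; as $\sigma_\infty^{t}$ has no fixed points, it must interchange the two points of $\tau$, which forces $t\equiv n\pmod{2n}$. But $\sigma_\infty^{t}$ must also stabilize $\sigma_1\tau$, hence preserve its fixed set $\{2n,p,q\}$, while translation by $n$ sends $2n$ to $n$; since $h<p<n<q<2n$ follows from $h<k<2n-h$, we have $n\notin\{2n,p,q\}$, a contradiction. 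Thus the stabilizer is trivial, the three values of $\ell$ give three distinct special $4$-tuples, and the count is exactly three. I expect the inequalities $h<p<n<q$ and the centralizer fact to be routine; the crux is this two-constraint argument, where $\tau$ being a transposition forces $t=n$ and the fixed set $\{2n,p,q\}$ then forbids it.
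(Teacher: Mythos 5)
Your proof is correct, and while the reduction to conjugation by powers of $\sigma_\infty$ and the identification of the three candidate exponents $\ell\in\{0,\,2n-p,\,2n-q\}$ (with $p=\tfrac{k+h}{2}$, $q=\tfrac{2n-h+k}{2}$) match the paper, your distinctness argument takes a genuinely different and cleaner route. The paper splits into two cases: the "equilateral" configuration $(h,k)=(\tfrac{n}{3},n)$ with $3\mid n$, where conjugation fixes $\sigma_0$ and $\sigma_1\tau$ but cyclically permutes the three admissible choices of $\tau$, and the generic case, where an explicit computation shows the conjugates carry different parameters $(h',k')$. You instead prove once and for all that the stabilizer of $\Sigma$ in $\langle\sigma_\infty\rangle$ is trivial: commuting with the transposition $\tau$ forces $t\equiv n\pmod{2n}$ (since $\sigma_\infty^t$ is fixed-point-free for $t\not\equiv 0$), while preserving $\mathrm{Fix}(\sigma_1\tau)=\{2n,p,q\}$ is impossible for $t=n$ because $h<k<2n-h$ gives $p<n<q$. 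This handles both of the paper's cases uniformly — in particular it correctly detects that in the equilateral case the three tuples differ only in $\tau$ — and it is also slightly more careful than the paper in requiring that $\sigma_1'$ and $\tau'$ each fix $2n$ (rather than just their product), which you justify by checking $\mathrm{Fix}(\sigma_1)\cap\mathrm{Fix}(\tau)=\mathrm{Fix}(\sigma_1\tau)$. The only cost is that your argument leans on the structural facts from the preceding proposition (the shape of $\sigma_1\tau$ and the factorization $\rho\tau=(2n-h,h,k)$), exactly as the paper's does, so nothing is lost; what you gain is the elimination of the case analysis and of the explicit $(h',k')$ computations.
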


\begin{proof}
Since the $4$-tuple $\Sigma=(\sigma_{\infty}, \sigma_0,\sigma_1,\tau)$ is special, we have that $\sigma_{\infty}=(2n, 2n-1, \ldots, 1)$ and $\sigma_1\tau$ fixes $2n$; moreover, by assumption it contains the $3$-cycle $(2n-h,h,k)$. Then, as proved before, $\sigma_0$ and  $\sigma_1\tau$ will have the shape \eqref{3-cycle}.
As discussed before, if $\Sigma'\neq \Sigma$ is a special $4$-tuple conjugated to $\Sigma$ then $\Sigma'=\gamma^{-1}\Sigma \gamma$ with $\gamma\in \left \{ \sigma_{\infty}^{2n-\frac{k+h}{2}}, \sigma_{\infty}^{2n-\frac{2n-h+k}{2}} \right \}$. To prove the assertion, we have to show that no $4$-tuple is fixed by such a conjugation. To do this, we consider two cases.

Assume first that $3 \mid n$ and that the $3$-cycle contained in $\sigma_1\tau$ is $\left (\frac{n}{3}, \frac{5}{3}n, n\right )$, i.e. $(h,k)= (\frac{n}{3}, n)$. By the description above, $\tau \in \left \{ \left (\frac{n}{3}, n \right ), \left (n, \frac{5}{3}n\right ), \left (\frac{n}{3}, \frac{5}{3} n \right )  \right \}$ and $\sigma_1\tau$ fixes $ \left \{\frac{2n}{3}, \frac{4n}{3},2n \right \}$ (which are the vertices of an equilateral triangle). Let us assume without loss of generality that $\tau=\left (\frac{n}{3}, n \right )$.
If we conjugate $\Sigma$ by $\gamma\in \left \{\sigma_{\infty}^{\frac{2n}{3}},\sigma_{\infty}^{\frac{4n}{3}} \right \}$, then $\gamma^{-1}\sigma_0 \gamma=\sigma_0$ and $\gamma^{-1}(\sigma_1\tau) \gamma=\sigma_1\tau$; hence
$\Sigma$ will be conjugated to $\Sigma'=\left (\sigma_\infty, \sigma_0, \sigma_1', \left (\frac{5}{3}n, n \right ) \right )$ and $\Sigma''=(\sigma_\infty, \sigma_0, \sigma_1'', \left (\frac{n}{3}, \frac{5}{3} n \right ))$.

Assume now $(h,k)\neq (\frac{n}{3}, n)$. As the conjugation preserves the disjoint cycle structure, $\sigma_1\tau$ will contain a $3$-cycle and by construction one of the indexes fixed by $\sigma_1\tau$ is $2n$, so the conjugated $4$-tuples will have the shape \eqref{shape_3_cycle}. We have only to compute the images of the $3$-cycles. A direct computation shows that, if we conjugate by $\sigma_{\infty}^{2n-\frac{k+h}{2}}$ we have $\Sigma'$ with $(h',k')=\left ( \frac{k-h}{2}, 2n- \frac{3h+k}{2}\right )$ while if we conjugate by $\sigma_{\infty}^{2n-\frac{2n-h-k}{2}}$ we have $\Sigma'$ with $(h',k')=\left ( n- \frac{h+k}{2}, n+\frac{3h-k}{2}\right )$, as wanted.
\end{proof}

Using these two propositions we can count the different conjugacy classes of $4$-tuples such that $\sigma_1\tau$ contains a $3$-cycle.

Let us consider a special $4$-tuple $\Sigma=(\sigma_0,\sigma_{\infty},\sigma_1,\tau_1)$ such that $\sigma_1\tau_1$ contains the $3$-cycle $(2n-h,h,k)$.
 By \eqref{shape_3_cycle} we have $h\equiv k \pmod 2$; moreover, every choice of the couple $(h,k)$ gives rise to three different choices of the couple $(\sigma_1,\tau)$, and by the previous proposition, each conjugacy class contains exactly three special $4$-tuples.
Consequently, the number of conjugacy classes is equal to the number of different choices of the couple $(h,k)$, i.e. to the number of couples $\{h,k\}$ such that $1\le h\le n-2$, $h+1<k<2n-h-1$ and $h\equiv k$ (mod $2$), hence
\[
 \# \textrm{ conjugacy classes}= \sum_{h=1}^{n-2} \frac{2n-2(h+1)}{2}=\sum_{h=1}^{n-2} (n-h-1)= \sum_{j=1}^{n-2} j= \frac{(n-1)(n-2)}{2}.
\]

\subsection{The 4-cycle case} \label{4-cycle}

Let $\Sigma=(\sigma_0,\sigma_{\infty},\sigma_1,\tau)$ be a special $4$-tuple, i.e. with $\sigma_{\infty}=(2n, \ldots, 1)$ and such that $\sigma_1$ and $\tau$
fix both $2n$, and assume that $\sigma_1\tau$ is a product of $n-4$ disjoint transpositions and a disjoint $4$-cycle. 

As in the previous section, our first result describes the shape of a special $4$-tuple of this kind.

\begin{proposition}
Let $\Sigma=(\sigma_0,\sigma_{\infty}, \sigma_1, \tau)$ be a special $4$-tuple such that $\sigma_1\tau$ contains a $4$-cycle. Then, there exist $1\le h< k_1 <k_2<2n-h$ with $h\equiv k_1\equiv k_2 \pmod 2$ such that 
\begin{align} \label{shape_4_cycle}
\sigma_0 &=\prod_{i=1}^h(i,2n+1-i) \prod_{j=1}^{\frac{k_1-h}{2}}(h+j, k_1+1-j)
\prod_{t=1}^{\frac{k_2-k_1}{2}} (k_1+t, k_2+1-t) \nonumber \\
&\prod_{v=1}^{\frac{2n-h-k_2}{2}}(k_2+v, 2n-h+1-v); \\
\sigma_1\tau &=\prod_{i=1}^{h-1}(i,2n-i)\prod_{j=1}^{\frac{k_1-h}{2}-1} (h+j, k_1-j) \prod_{t=1}^{\frac{k_2-k_1}{2}-1} (k_1+t, k_2-t) \nonumber \\
&\prod_{v=1}^{\frac{2n-h-k_2}{2}-1}(k_2+v,2n-h-v)\ (2n-h, h, k_1, k_2). \nonumber
\end{align}
In particular $\sigma_1\tau$ fixes four indexes: $2n, \frac{k_1+h}{2}$, $\frac{k_1+k_2}{2}$  and $\frac{2n-h+k_2}{2}$.
Moreover, there are two different $\Sigma$ satisfying \eqref{shape_4_cycle} corresponding to different choices of $\tau$, i.e. $\tau\in \{(h,k_2), (k_1, 2n-h)\}$.
\end{proposition}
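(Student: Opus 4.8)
The plan is to follow, essentially verbatim, the strategy used for the $3$-cycle case, extending the same ``unwinding'' of the relation $\sigma_0\sigma_{\infty}\sigma_1\tau=\mathrm{id}$ to a cycle of length four. Set $\mu:=\sigma_1\tau$. Since $\Sigma$ is special, $\sigma_1$ and $\tau$ both fix $2n$, hence $\mu(2n)=2n$, and the only facts I use are this, the relation above, and that $\sigma_0$ is a fixed-point-free involution. Reading the relation at $2n$ gives $\sigma_0(2n)=1$, so $\sigma_0(1)=2n$ and then $\mu(2n-1)=1$; more generally the relation determines each value of $\sigma_0$ from the last computed value of $\mu$ and conversely (concretely $\sigma_0(x)=\mu^{-1}(x)+1$ modulo $2n$, with $\sigma_{\infty}=(2n,\dots,1)$), so $\sigma_0$ and $\mu$ are reconstructed alternately, one index at a time.

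First I would run this propagation to build the outer block: the transpositions $(i,2n+1-i)$ of $\sigma_0$ and $(i,2n-i)$ of $\mu$, for $i=1,2,\dots$, until the first index $h$ at which the pattern fails, i.e.\ where $\mu(2n-h)=h$ is forced but $\mu(h)=k_1\neq 2n-h$. (The value $k_1=h+1$ is impossible, as it would make $\sigma_0$ fix $h+1$; this is the analogue of the exclusion $k\neq h+1$ in the $3$-cycle proof.) Unlike the $3$-cycle situation the cycle does not close after a single detour: continuing the cascade yields $\mu(k_1)=k_2$ and then $\mu(k_2)=2n-h$, producing the $4$-cycle $(2n-h,h,k_1,k_2)$. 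The remaining indices split into the three gaps $(h,k_1)$, $(k_1,k_2)$, $(k_2,2n-h)$, and the cascade fills each gap by a nested family of transpositions of $\sigma_0$ and $\mu$ that closes at the gap's midpoint; this produces the three extra fixed points $\tfrac{k_1+h}{2},\tfrac{k_1+k_2}{2},\tfrac{2n-h+k_2}{2}$ of $\mu$ (together with $2n$), forces each gap to have even length, and hence gives the congruences $h\equiv k_1\equiv k_2\pmod 2$. Because $\sigma_0$ comes out as a union of nested arcs, the three gaps are filled in increasing order, which pins down the ordering $h<k_1<k_2<2n-h$. Assembling the three blocks yields exactly the shapes of $\sigma_0$ and $\sigma_1\tau$ claimed in \eqref{shape_4_cycle}.

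It remains to recover the admissible $\tau$. Since $\mu=\sigma_1\tau$ has a $4$-cycle on $\{2n-h,h,k_1,k_2\}$ while $\sigma_1$ is a product of disjoint transpositions, $\tau$ cannot be disjoint from this set (else $\mu$ would restrict to an involution there) nor meet it in a single point (else the cycle through that point would have length $>4$); thus $\tau=(x,y)$ with $x,y\in\{2n-h,h,k_1,k_2\}$. Writing $\pi$ for the $4$-cycle, the condition that $\sigma_1=\mu\tau$ be an involution is $(\mu\tau)^2=\mathrm{id}$ on this set, equivalently $\tau\pi\tau=\pi^{-1}$, i.e.\ $\tau$ conjugates $\pi$ to its inverse. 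Among the six transpositions on four points this holds precisely for the two ``diagonals'', which for $\pi=(2n-h,h,k_1,k_2)$ are $(k_1,2n-h)$ and $(h,k_2)$; each gives a distinct involution $\sigma_1$ (with the correct four fixed points), hence the two special $4$-tuples asserted.

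The technical heart, and the main place requiring care, is the bookkeeping of the cascade in the second paragraph: one must check that the propagation runs through increasing nested blocks without prematurely creating a fixed point of $\sigma_0$ or repeating an index, that each gap indeed has even length so that its central transposition degenerates into the claimed fixed point of $\mu$, and that the cycle closes after exactly four steps. All of this is a direct transcription of the computation recorded around \eqref{shape_3_cycle}, while the determination of $\tau$ is the short group-theoretic observation above.
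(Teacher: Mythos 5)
Your proof is correct and follows essentially the same route as the paper: the same index-by-index "cascade" from the relation $\sigma_0\sigma_{\infty}\sigma_1\tau=\mathrm{id}$ starting at the fixed point $2n$, with the same exclusions (e.g.\ $k_1\neq h+1$), the same midpoint/parity bookkeeping yielding the three extra fixed points and the congruences, leading to \eqref{shape_4_cycle}. Your only real deviation is the last step: where the paper simply exhibits the two factorizations $\sigma_1\tau$ with $\tau\in\{(2n-h,k_1),(h,k_2)\}$ by inspection, you derive them from the condition $\tau\pi\tau=\pi^{-1}$ for the $4$-cycle $\pi$, which is a slightly cleaner way to see that exactly the two diagonal transpositions occur.
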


\begin{proof}
Let $\Sigma=(\sigma_0,\sigma_{\infty},\sigma_1,\tau)$ be a special $4$-tuple and assume that $\sigma_1\tau$ is a product of $n-4$ disjoint transpositions and a disjoint $4$-cycle. In this case, $\sigma_1\tau$ fixes $4$ indexes, and one of them is $2n$.
Then, as $\sigma_0\sigma_{\infty}\sigma_1\tau=\id$, we have that $\sigma_0(2n)=1$. As $\sigma_0$ is composed only by transpositions, we have also that $\sigma_0(1)=2n$, hence $\sigma_1\tau(2n-1)=1$. Now, we have two choices: either $\sigma_1\tau(1)=2n-1$ (so $(2n-1,1)$ is one of the disjoint transpositions in the decomposition of $\sigma_1\tau$), or $\sigma_1\tau(1)=k\neq 2n-1$ (giving rise to the $4$-cycle in the product). 
We go on as in the disjoint case described in Subsection \ref{disjoint} until we reach the situation in which there exist $1\le h\le n-2$, $h+1\le k_1\le 2n-h-3$, $k_1+1\le k_2\le 2n-h-1$ such that $\sigma_0$ contains the disjoint transpositions $(1, 2n),(2,2n-1)\cdots (h,2n-h+1)$ and $\sigma_1\tau$ contains the disjoint transpositions $(1,2n-1),\ldots, (2n-h+1,h-1)$ and the 4-cycle $(2n-h,h,k_1, k_2)$. %\commF{Aggiungo questa frase e una pi\`u sotto perch\'e $k_1+1\le k_2$ non era giustificato} The fact that we cannot take $k_2\in [h+1,k_1-1]$ will be justified below.
 It is easy to see that $k_1\neq h+1, 2n-h-3$ and $k_2\neq k_1+1, 2n-h-1$.
For example, let us assume by contradiction that $k_1=h+1$; then, as we have that $\sigma_0\sigma_{\infty}\sigma_1\tau_1=\id$, we would have that $\sigma_0$ fixes $h+1$, which is impossible because $\sigma_0$ has no fixed points. For the other cases we can argue similarly. So, $h+2 \le k_1 \le 2n-h-4$ and $k_1+2\le k_2\le 2n-h-2$, which implies that $1\le h\le n-3$. \\

Now, if $\sigma_1\tau(h)=k_1$, then by $\sigma_0\sigma_{\infty}\sigma_1\tau=\id$ we have $\sigma_0(k_1)=h+1$ and so $\sigma_0(h+1)=k_1$. This means that $\sigma_1\tau(h+1)=k_1-1$ and $\sigma_1\tau(k_1-1)=h+1$ and so on. This process ends when one reaches the conditions $\sigma_0\left ( \frac{k_1+h}{2}\right)=\frac{k_1+h}{2}+1$ and viceversa, $\sigma_1\tau\left (\frac{k_1+h}{2}-1\right )=\frac{k_1+h}{2}+1$ and viceversa and $\frac{k_1+h}{2}$ is fixed by $\sigma_1\tau$. Notice that also in this case this gives the additional condition that $h\equiv k_1 \pmod 2$. Here we also see that $k_2$ cannot lie between $h$ and $k_1$.

On the other hand, if $\sigma_1\tau$ contains the 4-cycle $(2n-h,h,k_1,k_2)$, we also have that $\sigma_1\tau(k_1)=k_2$ and $\sigma_1\tau(k_2)=2n-h$, giving other conditions to satisfy. Indeed, by $\sigma_0\sigma_{\infty}\sigma_1\tau=\id$, we have $\sigma_0(2n-h)=k_2+1$ and $\sigma_0(k_2+1)=2n-h$. This implies that $\sigma_1\tau(2n-h-1)=k_2+1$ and viceversa, and so on. This process ends when one reaches the conditions $\sigma_0\left (\frac{2n-h+k_2}{2}\right)=\frac{2n-h+k_2}{2}+1$ and viceversa, $\sigma_1 \tau\left ( \frac{2n-h+k_2}{2}-1\right)=\frac{2n-h+k_2}{2}+1$ and $\frac{2n-h+k_2}{2}$ is fixed by $\sigma_1\tau$. With the same argument, as $\sigma_1\tau(k_1)=k_2$, by $\sigma_0\sigma_{\infty}\sigma_1\tau=\id$, we have $\sigma_0(k_1+1)=k_2$ and $\sigma_0(k_2)=k_1+1$. This means that $\sigma_1\tau(k_1+1)=k_2-1$ and viceversa, and so on. This process ends when one reaches the conditions $\sigma_0\left ( \frac{k_1+k_2}{2}\right)=\frac{k_2+k_2}{2}+1$ and viceversa, $\sigma_1 \tau\left ( \frac{k_1+k_2}{2}-1\right)=\frac{k_1+k_2}{2}+1$ and $\frac{k_1+k_2}{2}$ is fixed by $\sigma_1\tau$. This gives, as before, the additional condition that $k_1\equiv k_2 \pmod 2$. This proves that $\sigma_0$ and $\sigma_1\tau$ have the shape \eqref{shape_4_cycle}, proving the first part of the proposition.

In particular $\sigma_1\tau$ fixes 4 indexes, i.e. $\frac{k_1+h}{2}$, $\frac{k_1+k_2}{2}$  and $\frac{2n-h+k_2}{2}$ and $2n$. Let us denote by 
\[
\beta_{hk_1k_2}=: \sigma_1\tau (2n-h, h, k_1, k_2)^{-1}, 
\]
i.e. the product of the disjoint transpositions appearing in the decomposition of $\sigma_1\tau$. Then, we have two different choices for the couple $(\sigma_1,\tau)$ giving the same product, namely $\sigma_1=\beta_{hk_1k_2} (2n-h, h)(k_1,k_2)$ and $\tau=(2n-h,k_1)$ or $\sigma_1=\beta_{hk_1k_2} (h,k_1)(2n-h, k_2)$ and $\tau=(h, k_2)$. This concludes the proof.
\end{proof}

As in the previous section, we want to count how many special $4$-tuples such that $\sigma_1\tau$ contains a $4$-cycle there are in each conjugacy class.

Notice that, if we have two special $4$-tuples $\Sigma=(\sigma_0,\sigma_{\infty},\sigma_1,\tau_1)$ and $\Sigma'=(\sigma_0',\sigma_{\infty}',\sigma_1',\tau_1')$ with $\sigma_{\infty}=\sigma_{\infty}'=(2n, 2n-1, \ldots, 1)$ and such that $\Sigma'=\gamma^{-1} \Sigma \gamma$ for some $\gamma\in S_{2n}$, then $\gamma$ must be a power of $\sigma_{\infty}$.

Let us call $p:= \frac{k_1+h}{2},q:=\frac{k_1+k_2}{2}$ and $r:=\frac{2n-h+k_2}{2}$ the three indexes other than $2n$ fixed by $\sigma_1\tau$. Since we are interested in conjugations by permutations of the form $\sigma_{\infty}^\ell$ that send the set $\{p,q,r,2n\}$ into a set containing $\{2n\}$, the only admissible $\gamma$ will be $\sigma_{\infty}^{2n-p}$, $\sigma_{\infty}^{2n-q}$ or $\sigma_{\infty}^{2n-r}$. In particular, in the first case the set $\{2n, p, q, r\}$ is sent to $\{2n, 2n-p, q-p, r-p\}$, in the second case to $\{2n, 2n-(q-p), r-q, 2n-q\}$ and, in the last case, to $\{2n-r, 2n-(r-p), 2n-(r-q), 2n\}$. 

The following proposition describes how many special $4$-tuples we have in every conjugacy class of a $4$-tuple; in this case, this depends on the configuration of $\{p,q,r,2n\}$.

\begin{proposition} \label{prop:4-cycle}
Let $\Sigma=(\sigma_0, \sigma_{\infty},\sigma_1,\tau_1)$ be a special $4$-tuple such that $\sigma_1\tau$ contains the $4$-cycle $(h,k_1,k_2,2n-h)$; then we have two possibilities.
\begin{itemize}
\item If $n$ is even and the $4$-cycle is of the form $(h,n-h,n+h,2n-h)$, then the conjugacy class of $\Sigma$ contains only two special $4$-tuples;
\item if not, then the conjugacy class of $\Sigma$ contains exactly four special $4$-tuples.
\end{itemize}
\end{proposition}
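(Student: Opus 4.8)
The plan is to build on the reduction recalled just above: any special $4$-tuple in the conjugacy class of $\Sigma$ is obtained from $\Sigma$ by conjugating with a power of $\sigma_\infty$, so I set $\Sigma_\ell := \sigma_\infty^{-\ell}\Sigma\sigma_\infty^\ell$. First I would observe that $\Sigma_\ell$ is special precisely when both $\sigma_\infty^{-\ell}\sigma_1\sigma_\infty^\ell$ and $\sigma_\infty^{-\ell}\tau\sigma_\infty^\ell$ fix $2n$, which amounts to requiring $2n-\ell\in\mathrm{Fix}(\sigma_1)\cap\mathrm{Fix}(\tau)$. Reading off \eqref{shape_4_cycle}, one sees $\mathrm{Fix}(\sigma_1)=\{2n,p,q,r\}$, and since $\tau$ is a transposition whose support consists of two of the four-cycle points $\{h,k_1,k_2,2n-h\}$, each of $2n,p,q,r$ lies outside that support and is therefore fixed by both $\sigma_1$ and $\tau$. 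Hence the special tuples in the class are exactly $\Sigma_0,\Sigma_{2n-p},\Sigma_{2n-q},\Sigma_{2n-r}$, so the class contains at most four of them, recovering the admissible rotations listed above.

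Next I would decide when two of these coincide. Since $\Sigma_{\ell_1}=\Sigma_{\ell_2}$ if and only if $\sigma_\infty^{\ell_1-\ell_2}$ commutes with every entry of $\Sigma$, the whole question reduces to bounding the set $C:=\{m\in\mathbb{Z}/2n : \sigma_\infty^m \text{ centralizes } \sigma_0,\sigma_1,\tau\}$. The crucial constraint comes from $\tau$: writing $\tau=(a,b)$, one has $\sigma_\infty^m\tau\sigma_\infty^{-m}=(a-m,b-m)$, and since $\sigma_\infty^m$ is fixed-point free for $m\not\equiv 0$, equality with $(a,b)$ forces $a-m\equiv b$ and $b-m\equiv a$, whence $2m\equiv 0\pmod{2n}$. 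Thus $m\in\{0,n\}$, and $m=n$ is possible only when $a,b$ are antipodal, i.e. $b-a\equiv n$. Therefore $C\subseteq\{0,n\}$, so the class contains either four special tuples, when $C=\{0\}$, or fewer, when $C=\{0,n\}$.

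It remains to characterize $n\in C$, and this is exactly where the two cases split. If $n\in C$, then $\sigma_\infty^n$ centralizes $\sigma_1$ and hence preserves $\mathrm{Fix}(\sigma_1)=\{2n,p,q,r\}$; but $\sigma_\infty^n$ acts as the fixed-point-free involution $x\mapsto x+n$, so it must pair these four points into two antipodal pairs. As $2n\mapsto n$, this forces $n\in\{p,q,r\}$, and using $p<q<r$ together with the explicit values $p=\tfrac{k_1+h}{2}$, $q=\tfrac{k_1+k_2}{2}$, $r=\tfrac{2n-h+k_2}{2}$ one checks that the only possibility is $q=n$ and $r=p+n$, i.e. $k_1+k_2=2n$ and $k_2=n+h$; this is precisely the four-cycle $(h,n-h,n+h,2n-h)$, and integrality of the midpoints forces $n$ even. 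Conversely, for this symmetric configuration I would verify directly from \eqref{shape_4_cycle} that the half-turn $x\mapsto x+n$ permutes the transposition blocks of $\sigma_0$ and of $\sigma_1\tau$ among themselves — the four-cycle points split into the antipodal pairs $\{h,h+n\}$ and $\{n-h,2n-h\}$, and each of the four product-blocks in \eqref{shape_4_cycle} is carried to another such block — so $\sigma_\infty^n$ genuinely centralizes $\Sigma$ and $n\in C$. In that case the four exponents become $\{0,2n-p,2n-q,2n-r\}=\{0,\tfrac n2,n,\tfrac{3n}{2}\}$, which splits into the two cosets $\{0,n\}$ and $\{\tfrac n2,\tfrac{3n}{2}\}$ of $C$, giving exactly two distinct special tuples; in every other configuration $C=\{0\}$ and the four distinct exponents yield four distinct tuples, as claimed.

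I expect the main obstacle to be the explicit invariance check in the converse direction. Showing $n\in C$ requires $\sigma_\infty^n$ to commute with $\sigma_1$ and $\sigma_0$ themselves, not merely to preserve $\mathrm{Fix}(\sigma_1)$, so one must confirm that the half-turn $x\mapsto x+n$ maps the transposition set of each block in \eqref{shape_4_cycle} onto another block rather than destroying the pairing; this is the routine but delicate bookkeeping step. A secondary point to watch is that the four exponents collapse to exactly two cosets of $C$ and not three, which relies precisely on the antipodal relation $r=p+n$ furnished by the symmetric configuration.
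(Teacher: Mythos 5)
Your proposal is correct and takes essentially the same route as the paper: both restrict the admissible conjugators to the powers $\sigma_\infty^{2n-p},\sigma_\infty^{2n-q},\sigma_\infty^{2n-r}$ (those returning a fixed point of $\sigma_1\tau$ to $2n$) and then decide which of these actually fix $\Sigma$. Your centralizer bookkeeping --- that $\tau$ forces the stabilizer of $\Sigma$ inside $\langle\sigma_\infty\rangle$ to lie in $\{\mathrm{id},\sigma_\infty^n\}$, and that $\sigma_\infty^n$ can occur only for the antipodal configuration $(h,n-h,n+h,2n-h)$ with $n$ even --- is a cleaner articulation of the ``direct computation'' the paper merely asserts, and the claimed invariance of the transposition blocks of $\sigma_0$ and $\sigma_1\tau$ under $x\mapsto x+n$ does check out.
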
 

We point out that, looking at the symmetry of the problem, the number of special $4$-tuples contained in the conjugacy class of $\Sigma$ depends on the configuration of the points fixed by $\sigma_1\tau$. Indeed, in the first case, the set of fixed points is exactly $\left \{\frac{n}{2}, n, \frac{3}{2}n, 2n \right \}$, which are the vertices of a square.

\begin{proof}
Since $4$-tuple $\Sigma=(\sigma_{\infty}, \sigma_0,\sigma_1,\tau)$ is special, we have that $\sigma_{\infty}=(2n, 2n-1, \ldots, 1)$ and $\sigma_1\tau$ fixes $2n$; moreover, by assumption it contains the $4$-cycle $(h,k_1,k_2,2n-h)$. Then, as proved before, $\sigma_0$ and  $\sigma_1\tau$ will have the shape \eqref{4-cycle}.
As discussed before, if $\Sigma'\neq \Sigma $ is a special $4$-tuple conjugated to $\Sigma$ then $\Sigma'=\gamma^{-1}\Sigma \gamma$ with $\gamma\in \left \{ \sigma_{\infty}^{2n-\frac{h+k_1}{2}}, \sigma_{\infty}^{2n-\frac{k_1+k_2}{2}}, \sigma_{\infty}^{2n-\frac{2n-h+k_2}{2}} \right \}$. We distinguish two cases.

Suppose that $n$ is even and the $4$-cycle is of the form $(h,n-h,n+h,2n-h)$\footnote{Note that this configuration of the $4$-cycle is not possible if $n$ is odd since $h$, $n-h$ and $n+h$ would not have the same parity.}; then, the points fixed by $\sigma_1\tau$ are exactly $\left \{\frac{n}{2}, n, \frac{3}{2}n, 2n \right \}$. As discussed before, the other special $4$-tuples contained in the conjugacy class of $\Sigma$ will be of the form $\gamma^{-1}\Sigma \gamma$, with $\gamma\in \left \{ \sigma_{\infty}^{\frac{n}{2}}, \sigma_{\infty}^
{n}, \sigma_{\infty}^{\frac{3}{2}n} \right \}$. But for this special configuration, a direct computation shows that $\sigma_{\infty}^{-n}\Sigma \sigma_{\infty}^n=\Sigma$, and $\sigma_{\infty}^{-n/2}\Sigma \sigma_{\infty}^{n/2}=\sigma_{\infty}^{n/2}\Sigma \sigma_{\infty}^{-n/2}\neq \Sigma$, hence there are only two special $4$-tuples contained in the conjugacy class of $\Sigma$.% \commF{Qui non \`e possibile avere $\sigma_{\infty}^{-n/2}\Sigma \sigma_{\infty}^{n/2}=\sigma_{\infty}^{n/2}\Sigma \sigma_{\infty}^{-n/2}= \Sigma$? tipo se i punti stanno su un quadrato...}

Suppose now that the $4$-cycle contained in $\sigma_1\tau$ is not of the previous shape; we have that either $k_1\neq n-h$ or $k_2\neq n+h$. In this case, a direct computation shows that if we conjugate $\Sigma$ by some $\gamma\in \left \{ \sigma_{\infty}^{2n-\frac{h+k_1}{2}}, \sigma_{\infty}^{2n-\frac{k_1+k_2}{2}}, \sigma_{\infty}^{2n-\frac{2n-h+k_2}{2}} \right \}$, then $\sigma_0$ is not fixed, hence every conjugation gives rise to a special $4$-tuple in the conjugacy class of $\Sigma$, which concludes the proof.
\end{proof}

Using the proposition, we can count the number of different conjugacy classes such that $\sigma_1\tau$ contains a $4$-cycle; in this case we have to distinguish the case $n$ odd and $n$ even.

Suppose first $n$ odd; in this case, by Proposition \ref{prop:4-cycle}, every conjugacy class contains exactly four special $4$-tuples. Moreover, a special $4$-tuple containing a $4$-cycle $(h,k_1,k_2, 2n-h)$ with $h<k_1<k_2$ will have the shape \eqref{4-cycle}, and there are two different choices of the couple $(\sigma_1, \tau)$ which gives the same permutation $\sigma_1\tau$.

Putting all together, we have to count the number of ordered $3$-tuples $h<k_1<k_2$ with $h\equiv k_1 \equiv k_2 \pmod 2$ and such that $1\le h\le n-3$, $h+2\le k_1\le 2n-4-h$ and $k_1+2\le k_2\le 2n-2-h$, which are equal to 
\[
C_1= \sum_{h=1}^{n-3} \left [ \sum_{\tiny{\begin{matrix} k=h+2  \\ k\equiv h \pmod 2 \end{matrix}}}^{2n-4-h} \left ( n-1 -\frac{k+h}{2} \right ) \right ].
\]
Then we have that, if $n$ is odd, the number of conjugacy classes of a $4$-tuple such that $\sigma_1\tau$ contains a $4$-cycle is 
\[
\# \mbox{ conjugacy classes}= 2\cdot \frac{C_1}{4}= \frac{C_1}{2}.
\]

Suppose now $n$ even; in this case, we have to distinguish the cases in which the $4$-cycle is of the special form $(h,n-h,n+h,2n)$, which are $C_2=\frac{n}{2}-1$. Recall that, for every configuration of the $4$-cycle, we have two choices of the couple $(\sigma_1, \tau)$ giving rise to the same product. For this special configuration of the $4$-cycle, the conjugacy class contains indeed only two special $4$-tuples. The number of conjugacy classes in this case is given by
\[
\# \mbox{ conjugacy classes}= 2\cdot \frac{C_1-C_2}{4}+ 2\cdot \frac{C_2}{2}= \frac{C_1+C_2}{2}.
\]

%\commF{Bibliografia da sistemare}

\bibliographystyle{amsalpha}
\bibliography{bibliography}

\def\cprime{$'$}
\providecommand{\bysame}{\leavevmode\hbox to3em{\hrulefill}\thinspace}
\providecommand{\MR}{\relax\ifhmode\unskip\space\fi MR }
% \MRhref is called by the amsart/book/proc definition of \MR.
\providecommand{\MRhref}[2]{%
  \href{http://www.ams.org/mathscinet-getitem?mr=#1}{#2}
}
\providecommand{\href}[2]{#2}
\begin{thebibliography}{CDMZ19}

\bibitem[Abe26]{Abel}
N.~H. Abel, \emph{Ueber die {I}ntegration der {D}ifferential-{F}ormel
  {$\frac{\varrho d.x}{\sqrt R}$}, wenn {$R$} und {$\varrho$} ganze
  {F}unctionen sind}, J. Reine Angew. Math. \textbf{1} (1826), 185--221.

\bibitem[ACZ20]{ACZ18}
Y.~Andr\'{e}, P.~Corvaja, and U.~Zannier, \emph{The {B}etti map associated to a
  section of an abelian scheme}, Invent. Math. \textbf{222} (2020), no.~1,
  161--202.

\bibitem[BC20]{BC20}
F.~Barroero and L.~Capuano, \emph{Unlikely intersections in families of abelian
  varieties and the polynomial {P}ell equation}, Proc. Lond. Math. Soc. (3)
  \textbf{120} (2020), no.~2, 192--219.

\bibitem[Ber05]{Berry05}
T.~G. Berry, \emph{A type of hyperelliptic continued fraction}, Monatsh. Math.
  \textbf{145} (2005), no.~4, 269--283.

\bibitem[Bil99]{Bilu99}
Y.~Bilu, \emph{Quadratic factors of $f(x) - g(y)$}, Acta {Arith.} \textbf{90}
  (1999), no.~4, 341--355.

\bibitem[BMPZ16]{BMPZ}
D.~Bertrand, D.~Masser, A.~Pillay, and U.~Zannier, \emph{Relative
  {M}anin-{M}umford for semi-{A}belian surfaces}, Proc. Edinb. Math. Soc. (2)
  \textbf{59} (2016), no.~4, 837--875.

\bibitem[Bog99]{Bog99}
A.~B. Bogatyr\"{e}v, \emph{On the efficient computation of {C}hebyshev
  polynomials for several intervals}, Mat. Sb. \textbf{190} (1999), no.~11,
  15--50.

\bibitem[Bog12]{Bogatirev}
Andrei Bogatyrev, \emph{Extremal polynomials and {R}iemann surfaces}, Springer
  Monographs in Mathematics, Springer, Heidelberg, 2012, Translated from the
  2005 Russian original by Nikolai Kruzhilin.

\bibitem[BZ13]{BZ13}
V.~Burskii and A.~Zhedanov, \emph{On {Dirichlet, Poncelet and Abel problems}},
  Commun. Pure Appl. Anal. \textbf{12} (2013), no.~4, 1587--1633.

\bibitem[CDMZ19]{Corvaja_Demeio_Masser_Zannier}
P~Corvaja, J.~Demeio, D.~Masser, and U.~Zannier, \emph{On the torsion values
  for sections of an elliptic scheme}, arXiv:1909.01253 (2019), 1 -- 45.

\bibitem[CMZ18]{CMZ}
P.~Corvaja, D.~Masser, and U.~Zannier, \emph{Torsion hypersurfaces on abelian
  schemes and {B}etti coordinates}, Math. Ann. \textbf{371} (2018), no.~3-4,
  1013--1045.

\bibitem[CZ21]{CZbilliards}
P.~Corvaja and U.~Zannier, \emph{{Finiteness Theorems on Elliptical Billiards
  and a variant of the Dynamical Mordell-Lang conjecture}}, preprint (2021),
  with an appendix by P. Corvaja, J. Demeio, U. Zannier.

\bibitem[DR11]{DR11}
V.~Dragovi\'c and M.~Radnovic, \emph{Poncelet porisms and beyond. {I}ntegrable
  billiards, hyperelliptic {J}acobians and pencils of quadrics}, Birkh\"auser,
  2011.

\bibitem[Fri73]{Fried73}
M.~D. Fried, \emph{On a theorem of {R}itt and related {D}iophantine problems},
  {J. Reine. Angew. Math.} \textbf{264} (1973), 40--55.

\bibitem[Fri77]{Fried77}
M.~Fried, \emph{Fields of definition of function fields and {H}urwitz
  families---groups as {G}alois groups}, Comm. Algebra \textbf{5} (1977),
  no.~1, 17--82.

\bibitem[Gao20]{G182}
Ziyang Gao, \emph{Generic rank of {B}etti map and unlikely intersections},
  Compos. Math. \textbf{156} (2020), no.~12, 2469--2509.

\bibitem[HS00]{Hindry_Silverman}
M.~Hindry and J.H. Silverman, \emph{Diophantine {Geometry: An Introduction}},
  Graduate texts in mathematics, Springer, 2000.

\bibitem[Law16]{Lawrence16}
B.~Lawrence, \emph{A density result for real hyperelliptic curves}, C. R. Math.
  Acad. Sci. Paris \textbf{354} (2016), no.~12, 1219--1224.

\bibitem[LMT93]{LMT}
R.~Lidl, G.~L. Mullen, and G.~Turnwald, \emph{Dickson polynomials}, Pitman
  Monographs and Surveys in Pure and Applied Mathematics, vol.~65, Longman
  Scientific \& Technical, Harlow; copublished in the United States with John
  Wiley \& Sons, Inc., New York, 1993.

\bibitem[McM06]{McM06}
C.~McMullen, \emph{{Teichm\"uller curves in genus two: Torsion divisors and
  ratios of sines}}, Invent. Math. \textbf{165} (2006), no.~3, 651--672.

\bibitem[Mir95]{Miranda}
R.~Miranda, \emph{Agebraic {Curves and Riemann Surfaces}}, Graduate {Studies in
  Mathematics}, vol.~5, {AMS}, 1995.

\bibitem[MZ15]{MZ14b}
D.~Masser and U.~Zannier, \emph{Torsion points on families of simple abelian
  surfaces and {P}ell's equation over polynomial rings}, J. Eur. Math. Soc.
  (JEMS) \textbf{17} (2015), no.~9, 2379--2416, With an appendix by E. V.
  Flynn.

\bibitem[MZ20]{MZpreprint}
\bysame, \emph{Torsion points, {P}ell's equation, and integration in elementary
  terms}, Acta Math. \textbf{225} (2020), no.~2, 227--313.

\bibitem[Peh01]{Peherstorfer}
F.~Peherstorfer, \emph{Deformation of minimal polynomials and approximation of
  several intervals by an inverse polynomial mapping}, J. Approx. Theory
  \textbf{111} (2001), no.~2, 180--195.

\bibitem[Rit22]{Ritt}
J.~F. Ritt, \emph{Prime and composite polynomials}, Trans. {Amer. Math. Soc.}
  (1922), 51--66.

\bibitem[Rob77]{Robinson}
R.~Robinson, \emph{Conjugate algebraic units in a special interval}, Math. Z.
  \textbf{154} (1977), no.~1, 31--40.

\bibitem[Sch00]{Schinzel}
A.~Schinzel, \emph{Polynomials with special regard to reducibility}, Cambridge
  {University Press}, 2000.

\bibitem[Sch19]{Harry}
H.~Schmidt, \emph{Relative {M}anin-{M}umford in additive extensions}, Trans.
  Amer. Math. Soc. \textbf{371} (2019), no.~9, 6463--6486.

\bibitem[Ser19]{Serre_Bourbaki}
J.-P. Serre, \emph{Distribution asymptotique des {V}aleurs {P}ropres des
  {E}ndomorphismes de {F}robenius [d'apr\`es {A}bel, {C}hebyshev,
  {R}obinson,{$\ldots$}]}, no. 414, S\'{e}minaire Bourbaki. Vol. 2017/2018.
  Expos\'{e}s 1136--1150, 2019, pp.~Exp. No. 1146, 379--426.

\bibitem[Tot01]{Totik}
V.~Totik, \emph{Polynomial inverse images and polynomial inequalities}, Acta
  Math. \textbf{187} (2001), no.~1, 139--160.

\bibitem[V{\"{o}}l96]{Volklein}
H.~V{\"{o}}lklein, \emph{Groups as {G}alois groups}, Cambridge Studies in
  Advanced Mathematics, vol.~53, Cambridge University Press, Cambridge, 1996,
  An introduction.

\bibitem[Zan12]{Zannier}
U.~Zannier, \emph{Some {Problems of Unlikely Intersections in Arithmetic and
  Geometry}}, Annals of Mathematics Studies, vol. 181, Princeton University
  Press, 2012, With appendixes by David Masser.

\bibitem[Zan13]{Zannier_indam}
\bysame, \emph{Unlikely {Intersections and Pell's equations in Polynomials}},
  Trends {in Contemporary Mathematics, Chapter} 12 \textbf{8} (2013), 151--169.

\bibitem[Zan19]{Zan19}
\bysame, \emph{Hyperelliptic continued fractions and generalized {J}acobians},
  Amer. J. Math. \textbf{141} (2019), no.~1, 1--40.

\end{thebibliography}

\end{document}